\newtheorem{theo}{Theorem}[section]
\newtheorem{prop}{Proposition}[section]
\newtheorem{cor}{Corollary}[section]
\newtheorem{lem}{Lemma}[section]
\newtheorem{defi}{Definition}[section]
\newtheorem{rem}{Remark}[section]
\title{The existence and regularity theory for abstract semi-linear time-fractional evolution equations}
\author{Mizuki Kojima}
\date{}
\begin{document}
\maketitle
\begin{abstract}
	In this paper, we investigate abstract time-fractional evolution equations with nonlinear perturbations. We construct solutions of Lipschitz perturbation problems in arbitrary large time interval independent of the Lipschitz constants. We will extend well-known results for standard evolution equations such as the blow-up alternative, to the time-fractional evolution equations. We also prove the differentiability with respect to time of the solution when the perturbation is sufficiently smooth. The differentiability enables us to use the maximum principle. The theory on general Banach spaces enables us to deduce space regularity result easily.
\end{abstract}

%%%%%%%%%%%%%%%%%%%%%%%%%%%%%%%%%%%%%%%%%%%%%%%%%%%%%%%%%%%%%%%%%%%%%%%%%%%%%
%%%%%%%%%%%%%%%%%%%%%%%%%%%%%%%%%%INTRODUCTION%%%%%%%%%%%%%%%%%%%%%%%%%%%%%%%
%%%%%%%%%%%%%%%%%%%%%%%%%%%%%%%%%%%%%%%%%%%%%%%%%%%%%%%%%%%%%%%%%%%%%%%%%%%%%
\section{Introduction}

In this paper, we study the following abstract time-fractional Cauchy problem  
 \begin{equation}\label{eq. semilinear problem}
 	\left\{
 	\begin{aligned}
 	D_t^{\alpha}(u-u_0)(t) +A u(t) +B(t,u(t)) &=f(t)\ \mbox{in}\ (0,T],\\
 	u(0)&=u_0
 	\end{aligned}
 	\right.
 \end{equation}
in a Banach space $X$. The operator $D_t^{\alpha}$ is the Riemann-Liouville fractional differential operator of order $\alpha\in (0,1)$ defined by
\begin{equation}\label{eq. Riemanliouville derivative}
	D_t^{\alpha}(u)(t):=\frac{1}{\Gamma(1-\alpha)}\frac{d}{dt}\int_{0}^{t}(t-\tau)^{-\alpha}u(\tau)d\tau,
\end{equation}
and $A:D(A)\rightarrow X$ is a generator of some analytic semigroup.\\

The linear time-fractional evolution equation of the form
\begin{equation}\label{eq. linear cauchy problem}
\left\{
\begin{aligned}
D_t^{\alpha}(u-u_0)(t) +A u(t) &=f(t)\ \mbox{in}\ (0,T],\\
u(0)&=u_0
\end{aligned}
\right.
\end{equation}
was originally proposed in order to model the so-called anomalous diffusion phenomenon, which is different from the usual diffusion of materials based on Brownian motion (\cite{Masuda et al}, \cite{R. Metzler}).

There are many studies on the time-fractional Cauchy problem (\ref{eq. linear cauchy problem}). In \cite{MAH}, Mu, Ahmad and Huang investigated the existence and regularity of a solution when the external force term $f$ has the time regularity $f\in {\cal F}^{r,\beta}((0,T];X)$, where ${\cal F}^{r,\beta}((0,T];X)$ is a weighted H\"{o}lder continuous space. Luchko \cite{Luchko} established the maximum principle for (\ref{eq. linear cauchy problem}), but only under the condition that the solution is time differentiable. Guidetti \cite{Guidetti1},\cite{Guidetti2} investigated the maximal regularity properties of (\ref{eq. linear cauchy problem}) in an abstract framework. The mild-solution formula plays an important role in the theory of evolution equations, which is shown by \cite{Mahmoud}, \cite{perturbation} to be given by the following,
\begin{equation}\label{mild sol}
u(t)=\int_{0}^{\infty}h_{\alpha}(\theta)T(t^{\alpha}\theta) u_0 d\theta +\alpha\int_{0}^{t}\int_{0}^{\infty} (t-\tau)^{\alpha-1}\theta h_{\alpha}(\theta)T((t-\tau)^{\alpha}\theta)f(\tau)d\tau d\theta
\end{equation}
where $h_{\alpha}$ is a probability density function defined on $(0,\infty)$ such that
\[
\int_{0}^{\infty} \theta^{\gamma}h_{\alpha}(\theta) d\theta =\frac{\Gamma(1+\gamma)}{\Gamma(1+\alpha\gamma)}\ \ \mbox{forall}\ \gamma\in (-1,\infty),
\]
and $T(t)$ denotes the semigroup generated by $A$.

As for semi-linear and nonlinear problems, Zhou and Jiao \cite{perturbation} proved the existence of a local-in-time mild solution with nonlocal conditions,
\[
u(0)+g(u)=u_0
\]
where $g:C([0,T];X)\rightarrow X$ is a given function, while they imposed some restrictions on the growth of the perturbations in order to use the Banach fixed-point theorem. Akagi \cite{Akagi} showed the solvability of (\ref{eq. semilinear problem}) when $X$ is a Hilbert space and $A$ is replaced by subdifferential $\partial \varphi$ for some functional $\varphi$.

There are also some studies on specific models. Inc, Yusuf, Aliyu and Baleanu \cite{CAKG} investigated the one-dimensional time-fractional Allen-Cahn equation and Klein-Gordon equation. Liu, Cheng, Wangb and Zhao \cite{numerical} performed numerical analysis of time-fractional Allen-Cahn and Cahn-Hilliard phase-field models. These models describe the phase separation process of iron alloys. Zang, Sun \cite{fujita eq} concerned with the blow-up and global existence of a solution to the time-fractional Fujita equation. Zhang, Li and Su \cite{GL equation} investigated time-fractional Ginzburg-Landau equation which is used in the superconductivity theory.

It is also important for some approximation problems that the existence time $T>0$ can be chosen independently of the Lipschitz constant of the nonlinear term $B$. Regularized approximation equations enable us to consider a problem with some singularities. For example, in \cite{enzymatic}, the authors studied an equation that describes the heterogeneous chemical catalyst with the absorption term $B(u) = u^{-\beta}\chi_{u>0}$ by considering a regular approximation of $B$. In their discussion, it is essential that the solutions of approximation equations are defined in the same interval. Moreover, in the Stefan problem, the so-called penalty method is often used to construct the solution. This method also introduces the approximation equations to obtain a solution of the original problem. For the detail, we refer to \cite{KinderStam}, \cite{Heleshaw Stefan}. Nowadays there are some studies on Stefan problems in which the time-fractional heat equations is used as the governing equation, such as \cite{space fractional},\cite{timefractional stefan}.

\vspace{1ex}These examples motivate us to establish a unified and general theory
for the uniform-in-time solvability and regularity of solutions to (\ref{eq. semilinear problem}). The novelty of this paper is establishing the long-in-time existence of
a solution to the general semi-linear problem (\ref{eq. semilinear problem}), and a regularity of the solution in a unified approach. To this end, we introduce a weighted metric space of $X$-valued curves in such a way that the Banach fixed point theorem works merely with standard semigroup estimates. Moreover, we prove the regularity with respect to time variable $t$ by considering an auxiliary evolution equation that is obtained by formally differentiating (\ref{eq. semilinear problem}) with respect to $t$.
It is proved that a solution to the auxiliary equation turns out to be the derivative of the solution to (1.1) and this yields the desired regularity properties of the solution.

\vspace{1ex}Before stating the main theorems, we define the following weighted H\"{o}lder continuous function space.
\begin{defi}\label{weighted holder}
	For indices $0<\beta<r<1$, let ${\cal F}^{r,\beta}((0,T];X)$ denotes the function space of all continuous functions $f$ on $(0,T]$ with the following properties:
	\begin{enumerate}
		\item $\lim_{t\rightarrow 0}t^{1-r}f(t)$ exists.
		\item $f$ is $\beta$ H\"{o}lder continuous with the weight $s^{1-r+\beta}$, that is,
		\[
		\sup_{0\le s<t\le T}\frac{s^{1-r+\beta}\| f(t)-f(s) \|}{(t-s)^{\beta}}<\infty.
		\]
		\item the following holds,
		\[
		\lim_{t\rightarrow 0} w_f(t)=\lim_{t\rightarrow 0} \sup_{0\le s<t}\frac{s^{1-r+\beta}\| f(t)-f(s) \|}{(t-s)^{\beta}}=0.
		\]
	\end{enumerate}
\end{defi}
This function space become Banach space with the norm; 
\[
\|f\|_{{\cal F}^{r,\beta}}=\sup_{0\le t\le T}t^{1-r}\| f(t)\| +\sup_{0\le s<t\le T} \frac{s^{1-r+\beta}\| f(t)-f(s) \|}{(t-s)^{\beta}}.
\]
If there is no danger of confusion, we shall write ${\cal F}^{r,\beta}$ instead of ${\cal F}^{r,\beta}((0,T];X)$. It is obvious that the embedding ${\cal F}^{r,\beta'}\subset {\cal F}^{r,\beta}$ holds for $0<\beta<\beta'<r\le 1$ . When $g\in C^{\sigma}([0,T];X)$ satisfies $g(0)=0$, $f(t)=t^{r-1}g(t)$ belongs to ${\cal F}^{r,\sigma}((0,T];X)$. Also, for $0<\beta<r=1$, $f\in C^{\beta}$ belongs to ${\cal F}^{1,\beta}$. We refer to \cite{Yagi} in detail.

\vspace{1ex}First of all, we consider the Lipschitz perturbation problem where $B$ satisfies
\begin{equation}\label{eq. lipschitz condition}
\|B(t,u)-B(t,v)\|\le L \|u-v\|\ \ \mbox{forall}\ u,v\in X,\ t\ge 0
\end{equation}
for some Lipschitz constant $L>0$. See Appendix for the definition of fractional power operator $A^q$.
\begin{theo}\label{theo. main theorem1}
	Suppose that (\ref{eq. lipschitz condition}) is satisfied. Let $T, \beta, r$ satisfy $T>0$, $0<\beta<r\le 1$, $0<\max(\beta, 1-\alpha)<r$ and $\beta< \alpha$. Then, for all $f\in {\cal F}^{r,\beta}((0,T];X)$, $u_0 \in \overline{D(A)}$, the semi-linear problem (\ref{eq. semilinear problem}) has a unique solution 
	\[
	u\in C([0,T];X)\cap C((0,T];D(A))\cap {\cal F}^{r,\beta}((0,T];X).
	\]
	Furthermore, the time regularity of $u$ can be improved as follows,
	\begin{enumerate}
	\item In addition, if $u_0\in D(A^{q})$, $f\in C([0,T];X)$ for $\alpha^{-1}\beta \le q\le$ and $\beta<\alpha$, then $u\in C^{\beta}([0,T];X)$.
	\item In addition, if $u_0\in D(A)$ and $f\in {\cal F}^{1,\beta}((0,T];X)$, then $u\in C^{\alpha}([0,T];X)$.
	\end{enumerate}
\end{theo}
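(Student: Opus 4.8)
The plan is to recast (\ref{eq. semilinear problem}) as a fixed point problem via the mild-solution representation (\ref{mild sol}), to solve this fixed point problem in a weighted metric so that the Banach fixed point theorem applies \emph{with a contraction constant tuned only through the Lipschitz constant $L$, never through $T$}, and then to upgrade the regularity of the resulting curve by feeding it back into the representation and invoking the linear theory for (\ref{eq. linear cauchy problem}). Introduce the solution operators
\[
S_\alpha(t):=\int_0^\infty h_\alpha(\theta)T(t^\alpha\theta)\,d\theta,\qquad P_\alpha(t):=\alpha\,t^{\alpha-1}\!\int_0^\infty \theta\,h_\alpha(\theta)T(t^\alpha\theta)\,d\theta ,
\]
so that, by the linear theory of (\ref{eq. linear cauchy problem}) applied with forcing term $f-B(\cdot,u(\cdot))$, a curve $u\in C([0,T];X)$ solves (\ref{eq. semilinear problem}) if and only if $u=\Phi(u)$, where $\Phi(u)(t):=S_\alpha(t)u_0+\int_0^t P_\alpha(t-\tau)\bigl(f(\tau)-B(\tau,u(\tau))\bigr)\,d\tau$. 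From the analyticity of $T(\cdot)$ and the moment identity for $h_\alpha$ one records $\|S_\alpha(t)\|\le M$, $\|P_\alpha(t)\|\le C\,t^{\alpha-1}$, $\|A^\mu P_\alpha(t)\|\le C_\mu\,t^{\alpha(1-\mu)-1}$ for $0\le\mu\le1$, $\|\tfrac{d}{dt}P_\alpha(t)\|\le C\,t^{\alpha-2}$, and, writing $(S_\alpha(t)-S_\alpha(s))v=-\int_0^\infty h_\alpha(\theta)\int_{s^\alpha\theta}^{t^\alpha\theta}AT(\sigma)v\,d\sigma\,d\theta$ and using $\|AT(\sigma)v\|\le C\sigma^{q-1}\|A^qv\|$, also $\|(S_\alpha(t)-S_\alpha(s))v\|\le C\,(t-s)^{\min(\alpha q,1)}\|A^qv\|$ for $v\in D(A^q)$, $0\le q\le1$; these estimates and the linear existence/regularity theory are the only inputs.

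\emph{Existence and uniqueness, independently of $L$.} Since $B(\cdot,u(\cdot))$ is bounded whenever $u\in C([0,T];X)$, $\Phi$ maps $C([0,T];X)$ into itself. Equip this space with $d_\lambda(u,v):=\sup_{0\le t\le T}e^{-\lambda t}\|u(t)-v(t)\|$ for a parameter $\lambda>0$. Then (\ref{eq. lipschitz condition}) and $\|P_\alpha(s)\|\le Cs^{\alpha-1}$ give
\[
e^{-\lambda t}\|\Phi(u)(t)-\Phi(v)(t)\|\le CL\Bigl(\int_0^t \sigma^{\alpha-1}e^{-\lambda\sigma}\,d\sigma\Bigr)d_\lambda(u,v)\le \frac{CL\,\Gamma(\alpha)}{\lambda^{\alpha}}\,d_\lambda(u,v),
\]
so $\Phi$ is a contraction as soon as $\lambda>\bigl(CL\,\Gamma(\alpha)\bigr)^{1/\alpha}$: the threshold for $\lambda$ depends on $L$ but puts no restriction on $T$, which is exactly the announced uniform-in-time solvability. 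The Banach fixed point theorem then produces a unique $u\in C([0,T];X)$ with $\Phi(u)=u$.

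\emph{Regularity.} Returning $u\in C([0,T];X)$ into $\Phi$ with bounded datum $g:=f-B(\cdot,u(\cdot))$, the standard splitting of $\int_0^tP_\alpha(t-\tau)g(\tau)\,d\tau$ together with $\int_0^{t-s}\|P_\alpha(\sigma)\|\,d\sigma+\int_0^s\|P_\alpha(t-\tau)-P_\alpha(s-\tau)\|\,d\tau\le C(t-s)^{\alpha}$ shows that $t\mapsto\int_0^tP_\alpha(t-\tau)g(\tau)\,d\tau$ lies in $C^{0,\alpha}([0,T];X)$, which embeds in $\mathcal{F}^{r,\beta}$ because $\beta<\alpha$ and that curve vanishes at $t=0$; since $S_\alpha(\cdot)u_0\in\mathcal{F}^{r,\beta}$ by the linear theory (where $\max(\beta,1-\alpha)<r$ is used), we get $u\in\mathcal{F}^{r,\beta}$. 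Consequently $u$ is weighted-H\"older on $(0,T]$, hence by (\ref{eq. lipschitz condition}) so is $B(\cdot,u(\cdot))$, i.e. $B(\cdot,u(\cdot))\in\mathcal{F}^{r,\beta}$; applying the linear regularity theory once more to $u=\Phi(u)$ places $u$ in $C([0,T];X)\cap C((0,T];D(A))\cap\mathcal{F}^{r,\beta}$, and uniqueness in this class is inherited from uniqueness of the $C([0,T];X)$-fixed point. For refinement (1), assume $u_0\in D(A^q)$ with $q\ge\alpha^{-1}\beta$ and $f\in C([0,T];X)$; then $g$ is bounded, the convolution term of $\Phi(u)$ is in $C^{0,\alpha}([0,T];X)\subset C^{\beta}([0,T];X)$ by the estimate above, and $\|(S_\alpha(t)-S_\alpha(s))u_0\|\le C(t-s)^{\min(\alpha q,1)}\|A^qu_0\|\le C(t-s)^{\beta}$ precisely because $\alpha q\ge\beta$; hence $u\in C^{\beta}([0,T];X)$. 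For refinement (2), $u_0\in D(A)$ and $f\in\mathcal{F}^{1,\beta}\subset C([0,T];X)$ give the convolution term in $C^{\alpha}([0,T];X)$ and $\|(S_\alpha(t)-S_\alpha(s))u_0\|\le C(t-s)^{\alpha}\|Au_0\|$, hence $u\in C^{\alpha}([0,T];X)$.

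\emph{Main obstacle.} The real work is not the contraction itself but constructing the weighted (H\"older-type) function spaces in which the linear solution map $g\mapsto\int_0^{\cdot}P_\alpha(\cdot-\tau)g(\tau)\,d\tau$ is bounded with the correct gain, so that both the fixed-point metric can be tuned by $L$ with no upper bound on $T$ and the bootstrap just sketched closes. This is where the hypotheses are spent: $\max(\beta,1-\alpha)<r$ secures integrability of $(t-\tau)^{\alpha-1}\tau^{r-1}$ and of the singular kernels left after a H\"older subtraction near $\tau=t$; $\beta<\alpha$ is what makes the convolution term $\beta$-H\"older; and $q\ge\alpha^{-1}\beta$ is the sharp threshold guaranteeing that the initial-data contribution $S_\alpha(\cdot)u_0$ is $\beta$-H\"older up to $t=0$.
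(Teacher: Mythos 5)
Your overall architecture is the same as the paper's: recast (\ref{eq. semilinear problem}) via the representation (\ref{mild sol}), run the Banach fixed point theorem on $C([0,T];X)$ with the exponentially weighted norm $\sup_t e^{-\lambda t}\|\cdot\|$ so that the contraction constant is beaten by choosing $\lambda$ large (no restriction on $T$ or $L$), and then identify the mild solution with the classical one through the linear theory of Proposition~\ref{existence} applied to the forcing $f-B(\cdot,u(\cdot))$. That part is fine. The genuine gap is in your regularity bootstrap: you treat the datum $g:=f-B(\cdot,u(\cdot))$ as \emph{bounded} and deduce that the convolution term lies in $C^{0,\alpha}([0,T];X)$, hence in ${\cal F}^{r,\beta}$. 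But the theorem allows $f\in{\cal F}^{r,\beta}$ with $r<1$, for which $\|f(t)\|$ may behave like $t^{r-1}$ and blow up at $t=0$; then $g$ is not bounded, your $C^{0,\alpha}$ estimate does not apply, and the conclusion $u\in{\cal F}^{r,\beta}$ is unproved. This is not a cosmetic point: you need $u\in{\cal F}^{r,\beta}$ to conclude $B(\cdot,u(\cdot))\in{\cal F}^{r,\beta}$, and only then does Proposition~\ref{existence} upgrade the fixed point to a solution in $C((0,T];D(A))$ and settle uniqueness in the stated class. The missing ingredient is exactly the paper's Proposition~\ref{prop. time regularity of S_1}(1): the weighted H\"older mapping property of $g\mapsto\int_0^t P_\alpha(t-\tau)g(\tau)\,d\tau$ on ${\cal F}^{r,\beta}$ itself, proved there by the three-term splitting with the weight $s^{1-r+\beta}$ and the kernel/semigroup difference estimates ($J_1,J_2,J_3$); this is the technical heart of the proof and your proposal does not supply a substitute for it.

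For the two refinements, where $f$ is genuinely bounded ($f\in C([0,T];X)$, respectively $f\in{\cal F}^{1,\beta}\subset C([0,T];X)$), your argument is sound and in fact takes a somewhat different (and cleaner) route than the paper for part (2): you use the kernel-derivative bound $\|\tfrac{d}{dt}P_\alpha(t)\|\le Ct^{\alpha-2}$, which indeed follows from $\|AT(t)\|\le Ct^{-1}$ and the finiteness of $\int_0^\infty\theta\,h_\alpha(\theta)\,d\theta$, so that $\int_0^s\|P_\alpha(t-\tau)-P_\alpha(s-\tau)\|\,d\tau\lesssim s^\alpha-t^\alpha+(t-s)^\alpha\le(t-s)^\alpha$ and the convolution of a bounded function is exactly $\alpha$-H\"older (no $\varepsilon$-loss, in contrast with the paper's $\delta<1$ splitting in Proposition~\ref{prop. time regularity of S_1}(2)). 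The paper instead proves $Au\in C([0,T];X)$ and invokes the characterization $D(B^\alpha)\subset\{v\in C^\alpha: v(0)=0\}$ from Section~\ref{section. Fractional power of operator}; your direct estimate avoids that machinery. So: patch the main regularity step by proving (or citing) the ${\cal F}^{r,\beta}\to{\cal F}^{r,\beta}$ estimate for the convolution operator, and the rest of your proposal stands.
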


We briefly mention the time regularity. At first, the additional assumption of $u_0\in  D(A^q)$ deduces the H\"{o}lder continuity of the term
\[
t\mapsto \int_{0}^{\infty}h_{\alpha}(\theta)T(t^{\alpha}\theta) u_0 d\theta
\]
in (\ref{mild sol}). In general, we can only expect the strong continuity of $T(t)$ at $t=0$. Therefore $u_0$ must have sufficient regularity. Also, roughly speaking, the assumption of $u_0\in D(A)$ and $f\in {\cal F}^{1,\beta}((0,T];X)$ means that $D_t^{\alpha}(u-u_0)$ is in $C([0,T];X)$. That is formally, the $\alpha$-order derivative is continuous on $[0,T]$, hence the original function must be in $C([0,T];X)$. We justify this argument in section\ \ref{section. Fractional power of operator}.

\vspace{1ex}Differentiability of the solution is essential for the use of maximum principle. However, to the author's best knowledge there has been no research on the differentiability in time of the solution in semi-linear problem (\ref{eq. semilinear problem}). We deduce the regularity in time of the solution when $f$ and $B$ are sufficiently regular to use the maximum principle shown by \cite{Luchko} which is useful in the study of the quantitative properties of the solution. We do not assume time dependence of $B$ in this theorem. The condition for indices $r', \beta'$ is required to obtain the well-posedness of the auxiliary equation in the proof.

\begin{theo}\label{theo. main theorem2}
	We assume the following conditions,
	\begin{enumerate}
		\item Time regularity
		\[
		f\in {\cal F}^{r,\beta}((0,T];X)\cap C^{1}((0,T];X),\ \ f'\in {\cal F}^{r',\beta'}((0,T];X)
		\]
		where we take $r, \beta$ as in Theorem\ \ref{theo. main theorem1} and $r',\beta'$ such that $0<\beta'<r'\le 1$, $0<\max(\beta', 1-\alpha)<r'$, $\beta'<\alpha$ and $\alpha/2\le r'$.
		\item The nonlinear part $B:X\rightarrow X$ is Fr\'{e}chet differentiable, its derivative $B':X\rightarrow B(X)$ is uniformly  bounded, that is, there exists $L>0$ such that $\|B(u)\|_{B(X)}\le L$ for all $u\in X$, and $B'$ is also Lipschitz continuous with Lipschitz constant $L'>0$.
		\item The initial condition $Au_0+B(u_0)-f(0)\in \overline{D(A)}$ are satisfied.
	\end{enumerate}
Then, the solution of the problem (\ref{eq. semilinear problem}) is differentiable with respect to time in $(0,T]$ and $u'\in {\cal F}^{\alpha-\gamma,\gamma}((0,T];X)$ where
\[
0<\gamma<\alpha/2,\ \alpha-\gamma\le r',\ \gamma\le \beta'.
\]
\end{theo}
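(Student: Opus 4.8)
\textbf{Proof plan.}
The strategy is to make rigorous the heuristic that differentiating \eqref{eq. semilinear problem} in $t$ produces a linear fractional evolution equation whose solution is $u'$. Throughout, $I^{\sigma}$ denotes the Riemann--Liouville fractional integral of order $\sigma>0$, so that $D_t^\alpha=\frac{d}{dt}\circ I^{1-\alpha}$, and $u$ is the solution of \eqref{eq. semilinear problem} given by Theorem \ref{theo. main theorem1} under the present hypotheses (condition 3 in particular forces $u_0\in D(A)$). Since $D_t^\alpha(u-u_0)=f-Au-B(u)$ and the right-hand side extends continuously to $t=0$, the curve $w:=D_t^\alpha(u-u_0)$ is continuous on $[0,T]$ with $w(0)=f(0)-Au_0-B(u_0)=:z_0$, and assumption 3 says precisely $z_0\in\overline{D(A)}$. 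If $u$ were differentiable with $u'=v$ then $w=I^{1-\alpha}v$, and differentiating $w+Au+B(u)=f$ would give the Riemann--Liouville Cauchy problem
\begin{equation}\label{eq. auxiliary}
D_t^\alpha v(t)+Av(t)+B'(u(t))v(t)=f'(t)\ \ \text{in }(0,T],\qquad \lim_{t\to0}(I^{1-\alpha}v)(t)=z_0 .
\end{equation}
I take \eqref{eq. auxiliary} as the auxiliary problem.

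First I would solve \eqref{eq. auxiliary}. Because $t\mapsto B'(u(t))$ is a strongly measurable family in $B(X)$ with $\|B'(u(t))\|_{B(X)}\le L$, the problem is linear in $v$, with mild formulation obtained from \eqref{mild sol} by replacing the homogeneous part with its Riemann--Liouville analogue and the forcing by $f'-B'(u)v$:
\[
v(t)=\alpha t^{\alpha-1}\!\int_0^\infty\!\theta h_\alpha(\theta)T(t^\alpha\theta)z_0\,d\theta+\alpha\int_0^t\!\!\int_0^\infty(t-\tau)^{\alpha-1}\theta h_\alpha(\theta)T((t-\tau)^\alpha\theta)\big(f'(\tau)-B'(u(\tau))v(\tau)\big)d\tau\,d\theta .
\]
I would run the fixed point argument of Theorem \ref{theo. main theorem1} on a weighted metric subspace of $\mathcal F^{\alpha-\gamma,\gamma}((0,T];X)$; the map above is a contraction there for \emph{any} $L$, the weight absorbing the $B'(u)v$ term exactly as in the Lipschitz case. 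The key point is that it maps $\mathcal F^{\alpha-\gamma,\gamma}$ into itself: the homogeneous term lies in $\mathcal F^{\alpha-\gamma,\gamma}$ for $z_0\in\overline{D(A)}$ — one has $t^{1-\alpha}$ times it converging to $z_0/\Gamma(\alpha)$ by strong continuity of $T$, while its weighted Hölder seminorm is controlled via analyticity $\|AT(s)\|\lesssim s^{-1}$ and the $\theta$-averaging — and the forcing term built from $f'\in\mathcal F^{r',\beta'}$ and from the product $B'(u)v$, which inherits weighted Hölder regularity from $v\in\mathcal F^{\alpha-\gamma,\gamma}$, $u\in\mathcal F^{r,\beta}\cap C$ and the Lipschitz continuity of $B'$, lies in the same space. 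The index conditions $0<\gamma<\alpha/2$, $\alpha/2\le r'$, $\alpha-\gamma\le r'$, $\gamma\le\beta'$ are exactly what this bookkeeping demands. This produces a unique $v\in\mathcal F^{\alpha-\gamma,\gamma}((0,T];X)\cap C((0,T];D(A))$ solving \eqref{eq. auxiliary}.

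Next I would identify $v$ with $u'$. Since $\|v(s)\|\lesssim s^{\alpha-\gamma-1}\in L^1(0,T)$, the curve $\tilde u(t):=u_0+\int_0^t v(s)\,ds$ is continuous, $\tilde u-u_0$ is absolutely continuous with derivative $v$, hence $D_t^\alpha(\tilde u-u_0)=I^{1-\alpha}v$. Put $g:=I^{1-\alpha}v+A\tilde u+B(\tilde u)$. Using \eqref{eq. auxiliary} and $\frac{d}{dt}I^{1-\alpha}v=D_t^\alpha v$ one gets $g(0)=z_0+Au_0+B(u_0)=f(0)$ and $g'(t)=f'(t)+\big(B'(\tilde u(t))-B'(u(t))\big)v(t)$ on $(0,T]$, so that
\[
g(t)-f(t)=\int_0^t\big(B'(\tilde u(s))-B'(u(s))\big)v(s)\,ds .
\]
Thus $\tilde u$ solves \eqref{eq. semilinear problem} with $f$ replaced by $g$ and the same $u_0$. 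Subtracting the mild formulas for $\tilde u$ and $u$, and estimating with $\|\big(B'(\tilde u(s))-B'(u(s))\big)v(s)\|\le L'\|\tilde u(s)-u(s)\|\,\|v(s)\|$ together with $\int_0^t\|v\|\lesssim t^{\alpha-\gamma}$, a fractional Gronwall argument forces $\tilde u\equiv u$ first on a small interval and then, by iteration, on $[0,T]$. Consequently $u=\tilde u$ is differentiable on $(0,T]$ with $u'=v\in\mathcal F^{\alpha-\gamma,\gamma}((0,T];X)$, which is the assertion.

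The main obstacle is the well-posedness and regularity of the \emph{singular} auxiliary problem \eqref{eq. auxiliary}: unlike in Theorem \ref{theo. main theorem1} the data enters as the fractional-integral trace $\lim_{t\to0}(I^{1-\alpha}v)(t)=z_0$, the homogeneous solution genuinely blows up like $t^{\alpha-1}$, and the natural space is therefore $\mathcal F^{\alpha-\gamma,\gamma}$ rather than one with $r$ near $1$; redoing the weighted-space estimates — in particular the weighted Hölder bounds for the $t^{\alpha-1}$-homogeneous term with $z_0$ only in $\overline{D(A)}$ and for the forcing generated by $B'(u)v$ — is the delicate part, and it is this step that pins down the admissible $(r',\beta',\gamma)$. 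A secondary difficulty is that the identification $u'=v$ must be carried out near $t=0$, where $u'$, $v$ and $D_t^\alpha v$ all blow up, so each termwise differentiation (of $A\tilde u$, of $B\circ\tilde u$, of $I^{1-\alpha}v$) and the closing Gronwall estimate must be arranged so that the unbounded weight $s^{\alpha-\gamma-1}$ is integrated against an Abel kernel and absorbed.
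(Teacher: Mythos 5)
Your proposal is correct in substance, and its first half coincides with the paper's own argument: the paper also differentiates the equation formally and studies exactly your auxiliary problem, namely the fractional equation with the Riemann--Liouville trace $\lim_{t\to 0}(I^{1-\alpha}v)(t)=f(0)-Au_0-B(u_0)$; its mild-solution formula and its solvability by a contraction in a weighted $\mathcal F^{\alpha-\gamma,\gamma}$ norm are precisely the content of Section~\ref{section. main theorem2 time diff} (formula (\ref{mild sol of integral problem}), Lemma~\ref{lem. continuity of w0}, Theorem~\ref{theo. perturbation with integral condition theorem}), and your index bookkeeping $0<\gamma<\alpha/2$, $\alpha-\gamma\le r'$, $\gamma\le\beta'$ matches theirs. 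Where you genuinely diverge is the identification of the auxiliary solution with $u'$. The paper does not freeze the coefficient at the known solution: for an arbitrary $u\in C([0,T];X)$ it solves $D_t^\alpha w+Aw+B'(u)w=f'$ with the integral initial condition, puts $U(t)=u_0+\int_0^t w(\tau)d\tau$, and shows that $u\mapsto U$ is a contraction for the weighted sup norm, the a priori bound $\|w(t)\|\lesssim t^{\alpha-1}$ and the estimate for $w_1-w_2$ coming from the Volterra/Mittag-Leffler inequality (Corollary~\ref{cor volterra}) together with the Lipschitz continuity of $B'$; the fixed point is then integrated and identified with the Theorem~\ref{theo. main theorem1} solution by uniqueness. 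You instead take $u$ itself, solve a single linear frozen-coefficient problem for $v$, and prove $u_0+\int_0^t v\equiv u$ by a Gronwall comparison of mild formulas, again via $L'$. Both routes use the same ingredients; yours saves the outer fixed-point layer, while the paper's avoids your intermediate object $g=I^{1-\alpha}v+A\tilde u+B(\tilde u)$ altogether.

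One point in your identification step deserves care: defining $g$ and computing $g'$ presupposes $\tilde u(t)\in D(A)$ and $\tfrac{d}{dt}A\tilde u=Av$ on $(0,T]$, which is not automatic, because with $z_0=f(0)-Au_0-B(u_0)$ only in $\overline{D(A)}$ the homogeneous part of $v$ yields $\|Av(s)\|\lesssim s^{-1}$ near $s=0$, so $Av$ need not be Bochner integrable and $\int_0^t v\,ds\in D(A)$ is not immediate. (The paper's own ``integrating both sides'' step at the end of its proof is of exactly the same nature, so this is a shared delicacy rather than a defect peculiar to your route; still, if you follow your argument you should either work on $[\epsilon,T]$ and let $\epsilon\to0$, or carry out the comparison at the level of the integrated/mild formulation rather than the pointwise differential one.) Note also that Theorem~\ref{theo. perturbation with integral condition theorem} is stated for an autonomous Lipschitz $B$, but its proof uses only the uniform Lipschitz bound, so your time-dependent coefficient $w\mapsto B'(u(t))w$ is admissible --- the paper applies it in the same way.
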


Also, we organize the the general result for blow-up of the solution with locally Lipschitz perturbation in the sense that

\begin{equation}\label{eq. local lipschitz}
\|B(t,u)-B(t,v)\|\le L(\rho)\|u-v\|\ \ \mbox{for all}\ u,v\in \{w\in X; \|w\|\le \rho\}
\end{equation}
where $L:\mathbb{R}\rightarrow \mathbb{R}$ is increasing function, and for each $u\in X$, $t\mapsto B(t,u)$ is bounded, with reference to the idea of \cite{fujita eq}, \cite{GL equation}.

\begin{theo}\label{theo. main theorem3}
	Suppose that $B$ satisfies (\ref{eq. local lipschitz}). Then, (\ref{eq. semilinear problem}) has the unique local solution. Furthermore, suppose that $T^*>0$ is the maximal existence time. Then, the solution satisfies either one of the following:
	\begin{enumerate}
		\item $T^*=\infty$.
		\item $T^*<\infty$ and $\displaystyle \lim_{t\rightarrow T^*}\|u(t)\|=\infty$.
	\end{enumerate}
\end{theo}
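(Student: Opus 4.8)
The plan is to reduce the local Lipschitz case to the global Lipschitz case treated in Theorem~\ref{theo. main theorem1} by a truncation argument, and then to extract the blow-up alternative by a standard continuation/maximality argument adapted to the fractional setting. First I would fix $u_0\in\overline{D(A)}$ and $f\in{\cal F}^{r,\beta}$, and for each $\rho>0$ introduce the truncated nonlinearity $B_\rho(t,u):=B(t,P_\rho u)$, where $P_\rho$ is the radial retraction onto the ball $\{\|u\|\le\rho\}$ (i.e.\ $P_\rho u=u$ if $\|u\|\le\rho$ and $P_\rho u=\rho u/\|u\|$ otherwise). Since $P_\rho$ is globally Lipschitz with constant $1$ and $B$ is Lipschitz on the $\rho$-ball with constant $L(\rho)$, the map $B_\rho$ satisfies the global Lipschitz condition (\ref{eq. lipschitz condition}) with constant $L(\rho)$, and it is still bounded in $t$ for each fixed $u$. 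Hence Theorem~\ref{theo. main theorem1} applies and yields a unique global solution $u_\rho\in C([0,T];X)\cap C((0,T];D(A))\cap{\cal F}^{r,\beta}$ of (\ref{eq. semilinear problem}) with $B$ replaced by $B_\rho$, for every $T>0$.

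Next I would establish local existence and uniqueness for the original problem. Since $\|u_0\|<\infty$, pick $\rho:=\|u_0\|+1$; by continuity of $u_\rho$ at $t=0$ with $u_\rho(0)=u_0$, there is a time $T_\rho>0$ such that $\|u_\rho(t)\|\le\rho$ on $[0,T_\rho]$, and on that interval $B_\rho(t,u_\rho(t))=B(t,u_\rho(t))$, so $u_\rho$ solves the untruncated problem on $[0,T_\rho]$. Uniqueness is local as well: if $u,v$ are two solutions on a common interval, both stay in some ball $\{\|w\|\le\rho'\}$ on a (possibly smaller) subinterval by continuity, where $B$ is $L(\rho')$-Lipschitz, so they agree there by the uniqueness part of Theorem~\ref{theo. main theorem1} applied to $B_{\rho'}$; a connectedness argument in $t$ then propagates uniqueness over the full common interval. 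This gives the ``unique local solution'' claim.

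I would then define the maximal existence time $T^*:=\sup\{T>0:\ \text{(\ref{eq. semilinear problem}) has a solution on }[0,T]\}\in(0,\infty]$ and, by patching the (unique, hence consistent) solutions on $[0,T]$ for $T<T^*$, obtain a well-defined solution $u$ on $[0,T^*)$. Suppose $T^*<\infty$ and, for contradiction, that $\liminf_{t\to T^*}\|u(t)\|<\infty$; I want to show this forces $\limsup_{t\to T^*}\|u(t)\|<\infty$ and then that $u$ extends past $T^*$. The mechanism: if $\|u(t)\|\le M$ on $[0,T^*)$, then $B(\cdot,u(\cdot))$ is bounded on $[0,T^*)$ (by $\|B(t,0)\|+L(M)M$, using that $t\mapsto B(t,0)$ is bounded), so the term $B(t,u(t))$ can be absorbed into the inhomogeneity; using the mild-solution formula (\ref{mild sol}) together with the analyticity/semigroup estimates behind Theorem~\ref{theo. main theorem1}, one shows $u(t)$ has a limit in $X$ as $t\to T^*$ (Cauchy in $X$), so $u$ extends continuously to $[0,T^*]$; then re-solving from the datum $u(T^*)$ via the already-proven local existence extends the solution beyond $T^*$, contradicting maximality. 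Therefore $T^*<\infty$ implies $\|u(t)\|\to\infty$ as $t\to T^*$ (the $\liminf$ and the $\limsup$ both being infinite, and in fact the full limit, since any finite subsequential limit would again let us restart and continue).

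The main obstacle is the continuation step in the fractional setting: unlike the classical semigroup case, the fractional mild-solution formula (\ref{mild sol}) has the memory kernel $\int_0^t\int_0^\infty(t-\tau)^{\alpha-1}\theta h_\alpha(\theta)T((t-\tau)^\alpha\theta)(\cdot)\,d\tau\,d\theta$, so ``restarting at $T^*$'' is not literally a translation of the equation—the Riemann–Liouville derivative $D_t^\alpha$ sees the entire history on $[0,T^*]$. Two routes handle this: (i) work directly with the integral equation on $[0,T^*+\delta]$, treating the already-known part of the solution on $[0,T^*]$ as fixed data and setting up the weighted-metric contraction of Theorem~\ref{theo. main theorem1} on the tail interval $[T^*,T^*+\delta]$ with the bounded forcing $f-B(\cdot,u(\cdot))$ on $[0,T^*]$ plus the unknown on $[T^*,T^*+\delta]$; the Lipschitz constant on a fixed ball containing $u(T^*)$ is finite, and the contraction time $\delta$ depends only on that constant and on $\|u(T^*)\|$, not on the history; or (ii) observe that boundedness of $u$ and of $B(\cdot,u(\cdot))$ near $T^*$ makes the right-hand side of (\ref{eq. semilinear problem}) a function in ${\cal F}^{r,\beta}$ on $[0,T^*]$, so the linear theory (\ref{eq. linear cauchy problem}) gives $u\in C([0,T^*];X)$, and then one reruns the fixed-point argument on a slightly larger interval. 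I would present route (i) in detail, as it most cleanly isolates why $T^*<\infty$ is incompatible with bounded $\|u\|$.
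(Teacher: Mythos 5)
Your local-existence step (radial truncation plus Theorem~\ref{theo. main theorem1}) and your route (i) are essentially the paper's argument, but as written there is a genuine gap in the blow-up alternative: you only carry out the continuation in the case where $u$ is bounded on all of $[0,T^*)$. The alternative requires ruling out the weaker condition $\liminf_{t\rightarrow T^*}\|u(t)\|<\infty$, and your claim that a finite $\liminf$ ``forces'' a finite $\limsup$ is never proved: the mechanism you give (``if $\|u(t)\|\le M$ on $[0,T^*)$, then \ldots'') assumes exactly what is to be shown, and the parenthetical ``any finite subsequential limit would let us restart and continue'' does not by itself yield a contradiction unless the continuation step is uniform in the restart point. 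Moreover, your route (i) restart at $T^*$ presupposes both that $u(T^*)$ exists and that the history forcing $f-B(\cdot,u(\cdot))$ is bounded on $[0,T^*]$; neither is available in the oscillatory scenario $\liminf<\infty=\limsup$, which is precisely the case that must be excluded.

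The repair is the paper's version of your route (i), anchored not at $T^*$ but at a time $t_{\epsilon}\in(T^*-\epsilon,T^*)$ with $\|u(t_{\epsilon})\|\le C$ (such times exist under the finite-$\liminf$ assumption): the history on $[0,t_{\epsilon}]$ is fixed data, where $u$ is continuous and hence bounded, and the tail map ${\cal T}$ is contracted on a ball $E_{h,\delta}$ of $C^{\beta}([t_{\epsilon},t_{\epsilon}+h];X)$ centered at $u(t_{\epsilon})$. The decisive point, which you state only for the $T^*$-restart, must be formulated at $t_{\epsilon}$: the admissible step $h$ depends only on $L(C+\delta)$, $\delta$, the semigroup constants and $f$, and is independent of $\epsilon$; choosing $\epsilon<h$, the glued function satisfies the integral equation on $[0,t_{\epsilon}+h]\supset[0,T^*]$, contradicting maximality. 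Note also that the paper contracts in the H\"older norm rather than your weighted sup norm precisely so that $\tau\mapsto B(w(\tau))$ is H\"older continuous and Proposition~\ref{existence} upgrades the glued mild solution to a genuine one; with a plain continuous-in-time contraction you still owe this regularity step, and your route (ii) remark that boundedness of $B(\cdot,u(\cdot))$ alone puts the right-hand side in ${\cal F}^{r,\beta}$ is not correct as stated, since boundedness gives no H\"older continuity.
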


The outline of this paper is as follows. In section\ \ref{section. preliminaries}, we introduce some notations, function spaces, and some preliminary results which shall be used later. In section\ \ref{section. Fractional power of operator}, we introduce the abstract theory of fractional power of operator, and characterize the time-fractional operator (\ref{eq. Riemanliouville derivative}) by the abstract framework. In section\ \ref{section. main theorem1}, we prove Theorem\ \ref{theo. main theorem1}. The outline of the proof is as follows. At first, we show that there exists a unique mild solution no matter how large the $T$ and $L$ are. The mild solution essentially becomes the usual one for (\ref{eq. semilinear problem}) because of the existence theorem given by \cite{MAH}. In section\ \ref{section. main theorem2 time diff}, we prove the differentiability in time of the solution when $f$ and $B$ have appropriate regularities. This result is essential to use maximum principle due to \cite{Luchko}. In section\ \ref{section. main theorem3 locally lipschitz perturbation}, we investigate the locally Lipschitz perturbation problems. In the usual evolution equation theory, the property so-called blow-up alternative holds for the equations equipped with locally Lipschitz perturbations. We verify the same result as usual must holds. The key idea is the extension of solutions via integral equations due to \cite{fujita eq}, \cite{GL equation}. In section\ \ref{section. application}, we apply our results to a specific model. We investigate the combustion model with the perturbation part $B(u)=e^{-1/u}$. This is treated by Ikeda, Mimura \cite{combustion} to describe the reaction-diffusion processes with the generation of heats, so the time-fractional diffusion model must be important to consider some complicated physical phenomena. The perturbation item comes from the Arrhenius law to states the dependence of reaction rates on absolute temperature $u$.

%%%%%%%%%%%%%%%%%%%%%%%%%%%%%%%%%%%%%%%%%%%%%%%%%%%%%%%%%%%%%%%%%%%%%%%%%%%%%
%%%%%%%%%%%%%%%%%%%%%%%%%%%%%%%%%%PRELIMINARIES%%%%%%%%%%%%%%%%%%%%%%%%%%%%%%%
%%%%%%%%%%%%%%%%%%%%%%%%%%%%%%%%%%%%%%%%%%%%%%%%%%%%%%%%%%%%%%%%%%%%%%%%%%%%%
\section{preliminaries}\label{section. preliminaries}

\subsection{Notations}

Let $X$ be a Banach space equipped with the norm $\|\cdot\|$. The liner operator $A:X\supset D(A)\rightarrow X$ is called sectorial, if it satisfies the following conditions,
\begin{equation}\label{sectrial1}
\sigma(A)\subset \Sigma_{\omega}=\{  \lambda\in \mathbb{C};\ |\arg \lambda |< \omega\}\ \mbox{for some}\ 0<\omega< \pi/2,
\end{equation}
\begin{equation}\label{sectrial2}
\| (\lambda-A)^{-1}\|_{B(X)}\le \frac{M}{|\lambda|}\ \ \forall \lambda\notin\Sigma_{\omega}\ \mbox{for some}\ M>0
\end{equation}
where $\sigma(A)$ is the spectrum set of $A$. The condition (\ref{sectrial1}) implies that the resolvent set of $A$ contains $0$, that is, $A^{-1}\in B(X)$. More precisely, for $\delta<\|A^{-1}\|^{-1}$, $\{ \lambda\in \mathbb{C}; |\lambda|\le \delta \}\subset \rho(A)$. We do not assume the density of $D(A)$ in order to enable us to apply the theory to a wide variety of function spaces to gain some space regularity. We can define the fractional power of the sectorial operator, which we shall organize in Appendix.

As we mentioned in the introduction, according to \cite{Mahmoud}, \cite{perturbation},  the solution of (\ref{eq. linear cauchy problem}) necessarily satisfies the following representation formula, which is the main tool to deduce some properties of the solution.
\[
u(t)=\int_{0}^{\infty}h_{\alpha}(\theta)T(t^{\alpha}\theta) u_0 d\theta +\alpha\int_{0}^{t}\int_{0}^{\infty} (t-\tau)^{\alpha-1}\theta h_{\alpha}(\theta)T((t-\tau)^{\alpha}\theta)f(\tau)d\tau d\theta
\]
where $h_{\alpha}$ is a probability density function defined on $(0,\infty)$ such that
\[
\int_{0}^{\infty} \theta^{\gamma}h_{\alpha}(\theta) d\theta =\frac{\Gamma(1+\gamma)}{\Gamma(1+\alpha\gamma)}\ \ \mbox{forall}\ \gamma\in (-1,\infty),
\]
and $T(t)$ denotes the semigroup generated by $A$ which satisfies the following estimates;
\begin{equation}\label{evaluate for semigroup1}
\begin{aligned}
&\| T(t)\|_{B(X)}\le C_1e^{-C_2 t}\le C_1\ \ \forall t\ge 0,\\
&\| A^{\delta}T(t)\|_{B(X)}\le C_{\delta}t^{-\delta}\ \ \forall t>0,\ \forall \delta>0.
\end{aligned}
\end{equation}
Moreover, for all $0<\delta\le 1$,
\begin{equation}\label{evaluate for semigroup2}
\| [T(t)-I]A^{-\delta} \|\le \left\| \int_{0}^{t} A^{1-\delta}T(\tau)d\tau \right\|\le C_{\delta}\int_{0}^{t} \tau^{\delta-1}d\tau\le C_{\delta}t^{\delta},\ \ 0<t<\infty.
\end{equation}

\subsection{Results on the existence and regularity}
We shall employ the following result when we conclude the mild solution of (\ref{eq. semilinear problem}) necessarily be the usual one.
\begin{prop}\label{existence}
	(see \cite{MAH}) Suppose that $f\in{\cal F}^{r,\beta}((0,T];X)$ and $u_0\in \overline{D(A)}$. Then, for all $0<\max(\beta,1-\alpha)<r\le1$ and $0<\beta<\alpha<1$, (\ref{eq. linear cauchy problem}) has unique solution $u \in C([0,T];X)\cap C((0,T];D(A))$.
\end{prop}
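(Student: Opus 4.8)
The plan is to construct the solution from the representation formula (\ref{mild sol}), to deduce its $C([0,T];X)\cap C((0,T];D(A))$ regularity from the three defining properties of ${\cal F}^{r,\beta}$ by a weighted parabolic Duhamel estimate, and to obtain uniqueness from an equivalent scalar-kernel Volterra equation. Write the candidate as $u=u_1+u_2$, where
\[
u_1(t):=\int_0^\infty h_\alpha(\theta)T(t^\alpha\theta)u_0\,d\theta=:S(t)u_0,\qquad u_2(t):=\int_0^t(t-\tau)^{\alpha-1}P(t-\tau)f(\tau)\,d\tau ,
\]
and $P(s):=\alpha\int_0^\infty\theta h_\alpha(\theta)T(s^\alpha\theta)\,d\theta$. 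From (\ref{evaluate for semigroup1}), the moment identity $\int_0^\infty\theta^\gamma h_\alpha(\theta)\,d\theta=\Gamma(1+\gamma)/\Gamma(1+\alpha\gamma)$ and the sectoriality (\ref{sectrial1})--(\ref{sectrial2}) of $A$ (through the Dunford representation of the subordinated families), one records, for $t,s>0$ and $0<\delta\le1$, the standard bounds $\|S(t)\|_{B(X)},\ \|P(s)\|_{B(X)}\le C$, $\|A^\delta S(t)\|_{B(X)}\le Ct^{-\alpha\delta}$, $\|A^\delta P(s)\|_{B(X)}\le Cs^{-\alpha\delta}$, together with the differentiation identity $S'(t)=-t^{\alpha-1}AP(t)$; integrating the latter (conditionally, using $S(\varepsilon)x\to x$ for $x\in\overline{D(A)}$) gives $\int_0^t\sigma^{\alpha-1}AP(\sigma)x\,d\sigma=x-S(t)x$.

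\emph{Continuity in $X$ and the initial condition.} Since $u_0\in\overline{D(A)}$, the orbit $t\mapsto T(t)u_0$ is continuous on $[0,\infty)$ with value $u_0$ at $0$, so $u_1\in C([0,T];X)$ with $u_1(0)=u_0$. Using $\|f(\tau)\|\le\|f\|_{{\cal F}^{r,\beta}}\tau^{r-1}$,
\[
\|u_2(t)\|\le C\|f\|_{{\cal F}^{r,\beta}}\int_0^t(t-\tau)^{\alpha-1}\tau^{r-1}\,d\tau\le C\|f\|_{{\cal F}^{r,\beta}}\,t^{\alpha+r-1},
\]
which tends to $0$ as $t\to0$ precisely because $r>1-\alpha$; continuity of $u_2$ on $(0,T]$ is routine after splitting the integral. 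Hence $u\in C([0,T];X)$ with $u(0)=u_0$.

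\emph{$D(A)$-regularity on $(0,T]$ and the equation.} For $t>0$ one has $u_1(t)\in D(A)$, with $\|Au_1(t)\|\le Ct^{-\alpha}\|u_0\|$ and $Au_1\in C((0,T];X)$. For $u_2$ the operator kernel $(t-\tau)^{\alpha-1}AP(t-\tau)$ obeys only the borderline-singular bound $(t-\tau)^{-1}$, so I would use the classical subtraction trick: for $t\in(0,T]$,
\[
Au_2(t)=\int_0^t(t-\tau)^{\alpha-1}AP(t-\tau)\bigl(f(\tau)-f(t)\bigr)\,d\tau+\bigl(I-S(t)\bigr)f(t),
\]
where the second term comes from the integrated identity above and satisfies $\|(I-S(t))f(t)\|\le C\|f\|_{{\cal F}^{r,\beta}}t^{r-1}$. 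For the first term, properties (2) and (3) of Definition~\ref{weighted holder} give $\|f(\tau)-f(t)\|\le\tau^{-(1-r+\beta)}(t-\tau)^\beta\,w_f(t)$, and with $\|AP(t-\tau)\|\le C(t-\tau)^{-\alpha}$,
\[
\Bigl\|\int_0^t(t-\tau)^{\alpha-1}AP(t-\tau)\bigl(f(\tau)-f(t)\bigr)\,d\tau\Bigr\|\le C\,w_f(t)\int_0^t(t-\tau)^{\beta-1}\tau^{r-\beta-1}\,d\tau\le C\,w_f(t)\,t^{r-1},
\]
which is finite because $0<\beta<r$. As $A$ is closed and these (improper) integrals converge, $u_2(t)\in D(A)$ with $Au_2$ as above; the same estimates, with the continuity of $f$ and of $S,P$, yield $Au_2\in C((0,T];X)$, so $u\in C((0,T];D(A))$. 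Finally, that $t\mapsto\int_0^t(t-\tau)^{-\alpha}(u(\tau)-u_0)\,d\tau$ is $C^1$ on $(0,T]$ and that $D_t^\alpha(u-u_0)+Au=f$ holds there is a direct verification, most cleanly via the Laplace transform in $t$ (using $\widehat{g_\alpha}(\lambda)=\lambda^{-\alpha}$ for $g_\alpha(t)=t^{\alpha-1}/\Gamma(\alpha)$ and the subordination identity for $h_\alpha$); the representation (\ref{mild sol}) is engineered precisely so that this holds.

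\emph{Uniqueness and the main obstacle.} If $u,v$ are two solutions, $w:=u-v$ satisfies $D_t^\alpha w+Aw=0$, $w(0)=0$, with $w\in C([0,T];X)\cap C((0,T];D(A))$; applying $I^\alpha$ rewrites this as the scalar-kernel Volterra equation $w+g_\alpha*Aw=0$, whose resolvent family is $\{S(t)\}$, and convolving with $S(\cdot)$ and using $w(0)=0$ forces $w\equiv0$ (cf.\ \cite{Yagi}). The genuinely delicate step is the $C((0,T];D(A))$-regularity of $u_2$: the operator kernel is not absolutely integrable, so one must use all of the weighted-H\"older structure of $f$ (the decay $\|f(\tau)\|\lesssim\tau^{r-1}$ at $0$, the weighted $\beta$-H\"older seminorm with weight $\tau^{1-r+\beta}$, and the vanishing of $w_f$ at $0$) in order to absorb the lost power $(t-\tau)^{-1}$ into the difference $f(\tau)-f(t)$. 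The standing hypotheses $0<\max(\beta,1-\alpha)<r\le1$ and $\beta<\alpha<1$ are exactly the conditions under which the Beta-type integrals above converge and the continuity estimates, for $u$ at $t=0$ and for $Au$ on $(0,T]$, close.
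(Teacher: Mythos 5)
The paper never proves this proposition: it is stated with ``(see \cite{MAH})'' and used purely as a black box (in the proof of Theorem~\ref{theo. main theorem1}, to promote the fixed point of the mild-solution map to a classical solution), so there is no in-paper argument to compare against. What you have written is essentially a reconstruction of the proof in the cited reference: the splitting $u=u_1+u_2$ via the subordinated families $S(t)$, $P(t)$, the bound $\|u_2(t)\|\lesssim t^{\alpha+r-1}$ using $r>1-\alpha$, the subtraction trick combined with the weighted H\"older structure of ${\cal F}^{r,\beta}$ (the weight $\tau^{1-r+\beta}$ and $w_f$) to control $Au_2$, and uniqueness through the Volterra/resolvent structure --- whereas the present paper, if pressed, would get uniqueness from the necessity of the representation formula (\ref{mild sol}) quoted from \cite{Mahmoud}, \cite{perturbation}. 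Two steps need repair in this paper's setting, where $D(A)$ is \emph{not} assumed dense. First, the smoothing bound $\|A S(t)\|_{B(X)}\le Ct^{-\alpha}$ (your $\delta=1$ case, which is what gives $u_1\in C((0,T];D(A))$ for $u_0$ merely in $\overline{D(A)}$) does not follow from (\ref{evaluate for semigroup1}) together with the moment identity, since $\int_0^\infty\theta^{-1}h_\alpha(\theta)\,d\theta$ diverges; it requires the Mittag--Leffler/contour (resolvent) representation of $S(t)$, and your appeal to ``the Dunford representation'' should be made explicit at exactly this point. Second, you justify $\int_0^t\sigma^{\alpha-1}AP(\sigma)x\,d\sigma=x-S(t)x$ by conditional integration using $S(\varepsilon)x\to x$, which is valid only for $x\in\overline{D(A)}$, yet you then apply it with $x=f(t)$, an arbitrary element of $X$; the identity is still true for every $x\in X$ (compare Laplace transforms, $\lambda^{-1}A(\lambda^{\alpha}+A)^{-1}=\lambda^{-1}-\lambda^{\alpha-1}(\lambda^{\alpha}+A)^{-1}$, and use closedness of $A$), so the cure is a different justification rather than a different argument. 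With those two points fixed, your proof is a faithful and correct account of the result the paper imports from \cite{MAH}.
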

It is known that the following maximal regularity results for (\ref{eq. linear cauchy problem}) hold.

\begin{prop}\label{space regularity}
	(see \cite{CGL}, \cite{Guidetti2}) For $\theta \in (0,1)$, the following conditions are necessary and sufficient, in order that there exists a unique solution $u$ of (\ref{eq. linear cauchy problem}) such that $D_{t}^{\alpha}(u-u_0), Au$ are bounded with values in $(X,D(A))_{\theta,\infty}$;
	\begin{enumerate}
		\item $f\in C([0,T];X)\cap B([0,T];(X,D(A))_{\theta,\infty})$,
		\item $u_0\in D(A), Au_0\in (X,D(A))_{\theta,\infty}$.
	\end{enumerate}
\end{prop}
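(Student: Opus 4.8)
The plan is to read everything off the representation formula (\ref{mild sol}), which by Proposition \ref{existence} is the unique solution, and to test membership in the real interpolation space $(X,D(A))_{\theta,\infty}$ through its semigroup characterization. I write the solution families
\[
S_\alpha(t)=\int_0^\infty h_\alpha(\theta)T(t^\alpha\theta)\,d\theta,\qquad P_\alpha(t)=\alpha\int_0^\infty \theta\,h_\alpha(\theta)T(t^\alpha\theta)\,d\theta,
\]
so that $u(t)=S_\alpha(t)u_0+\int_0^t(t-\tau)^{\alpha-1}P_\alpha(t-\tau)f(\tau)\,d\tau$, and recall that for the sectorial $A$ one has $y\in(X,D(A))_{\theta,\infty}$ if and only if $[y]_\theta:=\sup_{s>0}s^{1-\theta}\|AT(s)y\|<\infty$, with $\|y\|+[y]_\theta$ an equivalent norm. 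Since the equation gives $D_t^\alpha(u-u_0)=f-Au$, it suffices throughout to control $Au$. I would treat sufficiency and necessity separately, the former being the analytic core.

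For sufficiency, first record the fractional-power bounds on the solution families. Combining the second estimate in (\ref{evaluate for semigroup1}) with the moment identity for $h_\alpha$ gives, for $0\le\delta<2$,
\[
\|A^\delta P_\alpha(t)\|\le \alpha C_\delta\,t^{-\alpha\delta}\int_0^\infty\theta^{1-\delta}h_\alpha(\theta)\,d\theta=C\,t^{-\alpha\delta},
\]
and similarly $\|A^\delta S_\alpha(t)\|\le Ct^{-\alpha\delta}$ for $0\le\delta<1$. The homogeneous term is easy: since $u_0\in D(A)$ we commute $A$ onto $u_0$, writing $AS_\alpha(t)u_0=\int_0^\infty h_\alpha(\theta)T(t^\alpha\theta)Au_0\,d\theta$, and because the first bound in (\ref{evaluate for semigroup1}) makes $T(s)$ uniformly bounded both on $X$ and on $D(A)$, it is uniformly bounded on the interpolation space; as $h_\alpha$ is a probability density, $AS_\alpha(t)u_0$ stays in $(X,D(A))_{\theta,\infty}$ with norm controlled by $\|Au_0\|_{(X,D(A))_{\theta,\infty}}$, uniformly in $t$.

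The inhomogeneous term is the crux. To estimate $[Au_p(t)]_\theta$ I would split each application $A^2T(\sigma)=AT(\sigma/2)\,AT(\sigma/2)$ and use the interpolation bound $\|AT(\sigma/2)f(\tau)\|\le C\sigma^{\theta-1}[f(\tau)]_\theta$ together with $\|AT(\sigma/2)\|\le C\sigma^{-1}$, with $\sigma=\xi+(t-\tau)^\alpha\theta$. This yields $\|A^2T(\xi)P_\alpha(t-\tau)f(\tau)\|\le C[f]_\theta\,\alpha\int_0^\infty\theta h_\alpha(\theta)(\xi+(t-\tau)^\alpha\theta)^{\theta-2}\,d\theta$, and after multiplying by $(t-\tau)^{\alpha-1}$, integrating in $\tau$, substituting $w=(t-\tau)^\alpha\theta$ and using $\int_0^\infty h_\alpha=1$, everything collapses to
\[
\sup_{\xi>0}\xi^{1-\theta}\|AT(\xi)Au_p(t)\|\le \frac{C}{1-\theta}\,\sup_{\tau}[f(\tau)]_\theta\le \frac{C}{1-\theta}\,\|f\|_{B([0,T];(X,D(A))_{\theta,\infty})},
\]
uniformly in $t$; the probability normalization of $h_\alpha$ is exactly what makes the two singularities integrable against each other. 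A simpler companion estimate bounds $\|Au_p(t)\|_X$, so $Au_p(t)\in(X,D(A))_{\theta,\infty}$ uniformly, whence $Au$ and $D_t^\alpha(u-u_0)=f-Au$ are bounded with values in the interpolation space; uniqueness is inherited from Proposition \ref{existence}. Note that only boundedness of $f$ in the interpolation space is used, matching the hypothesis, while the continuity $f\in C([0,T];X)$ enters only to guarantee that (\ref{mild sol}) is a genuine solution.

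For necessity, suppose a regular solution $u$ exists. Then $f=Au+D_t^\alpha(u-u_0)$ is a sum of two functions bounded with values in $(X,D(A))_{\theta,\infty}$, hence $f$ is bounded there, while $f\in C([0,T];X)$ follows from the solution regularity together with its boundedness in the interpolation space near $t=0$; this is condition 1. For condition 2, the solution must coincide with (\ref{mild sol}), so $AS_\alpha(t)u_0=Au(t)-Au_p(t)$ is bounded in $(X,D(A))_{\theta,\infty}$ uniformly as $t\to0^+$, the second term now being controlled by the sufficiency estimate since $f$ is known to lie in the interpolation space. Boundedness of $\|AS_\alpha(t)u_0\|_X$ as $t\to0$ forces $u_0\in D(A)$, after which $AS_\alpha(t)u_0=S_\alpha(t)Au_0\to Au_0$ in $X$, and the lower semicontinuity of $y\mapsto\sup_{s>0}s^{1-\theta}\|AT(s)y\|$ under $X$-convergence gives $Au_0\in(X,D(A))_{\theta,\infty}$. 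The delicate point, and the step I expect to be the main obstacle, is exactly this trace argument at $t=0$: extracting $u_0\in D(A)$ from mere boundedness (rather than convergence) of $AS_\alpha(t)u_0$ without assuming reflexivity of $X$, which is where the real-interpolation trace theory of \cite{CGL}, \cite{Guidetti2} is required.
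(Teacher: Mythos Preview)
The paper does not supply its own proof of this proposition: it is stated as a preliminary result, with the parenthetical ``(see \cite{CGL}, \cite{Guidetti2})'' in lieu of any argument. So there is nothing in the paper to compare against beyond the citation.

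Your sufficiency argument is sound in its analytic core. The key computation does collapse as you claim: after the substitution $w=(t-\tau)^\alpha\theta$ in the inner integral one obtains
\[
\xi^{1-\theta}\|AT(\xi)Au_p(t)\|\le C\,[f]_\theta\int_0^\infty h_\alpha(\theta)\int_0^{t^\alpha\theta}\xi^{1-\theta}(\xi+w)^{\theta-2}\,dw\,d\theta\le \frac{C}{1-\theta}[f]_\theta,
\]
using only $\int_0^\infty h_\alpha=1$, exactly as you say. One caveat: you invoke Proposition~\ref{existence} to assert that (\ref{mild sol}) is the unique solution, but that proposition assumes $f\in\mathcal F^{r,\beta}$, which is not implied by $f\in C([0,T];X)\cap B([0,T];(X,D(A))_{\theta,\infty})$. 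The existence/uniqueness under the present hypotheses is part of what \cite{CGL}, \cite{Guidetti2} establish, so this circularity should be flagged or handled separately.

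On necessity you correctly isolate the genuine difficulty. Your claim that boundedness of $\|AS_\alpha(t)u_0\|_X$ as $t\to0$ forces $u_0\in D(A)$ is not automatic in a non-reflexive $X$ (boundedness of $\|AT(s)x\|$ as $s\to0$ characterizes $(X,D(A))_{1,\infty}$, which may strictly contain $D(A)$), and the averaging by $h_\alpha$ does not help. You are right that this is precisely where the trace machinery of \cite{CGL}, \cite{Guidetti2} enters; your deferral to those references at that point is honest, and matches what the paper itself does for the entire proposition.
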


\begin{prop}\label{time regularity}
	(see \cite{CGL}, \cite{Guidetti2}) For $\theta \in (0,\alpha)$, the following conditions are necessary and sufficient, in order that there exists a unique solution $u$ of (\ref{eq. linear cauchy problem}) such that $D_{t}^{\alpha}(u-u_0), Au$ belong to $C^{\theta}([0,T];X)$;
	\begin{enumerate}
		\item $f\in C^{\theta}([0,T];X)$,
		\item $u_0\in D(A), Au_0+f(0)\in (X,D(A))_{\theta/\alpha,\infty}$.
	\end{enumerate}
\end{prop}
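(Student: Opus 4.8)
The plan is to prove the equivalence by the classical Da Prato--Grisvard--Sinestrari maximal-Hölder-regularity strategy, adapted to the subordinated (fractional) setting through the representation formula (\ref{mild sol}). Throughout I would work with the two solution families
\[
S_\alpha(t)=\int_0^\infty h_\alpha(\theta)T(t^\alpha\theta)\,d\theta,\qquad P_\alpha(t)=\alpha\int_0^\infty\theta\,h_\alpha(\theta)T(t^\alpha\theta)\,d\theta,
\]
so that (\ref{mild sol}) reads $u(t)=S_\alpha(t)u_0+\int_0^t(t-s)^{\alpha-1}P_\alpha(t-s)f(s)\,ds$. Setting $v:=u-u_0$ gives $v(0)=0$, $D_t^\alpha v+Av=g$ with $g:=f-Au_0$, and hence $Au=Av+Au_0$ together with $D_t^\alpha(u-u_0)=f-Au$. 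Since $f\in C^\theta$ is equivalent to $D_t^\alpha(u-u_0)\in C^\theta\Leftrightarrow Au\in C^\theta$, and $Au_0$ is a constant, the entire assertion collapses to the single statement that $Av\in C^\theta([0,T];X)$, with the interpolation hypothesis now carried by $g(0)=f(0)-Au_0$ (the element named in condition (2), up to the sign convention of the equation).

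Before the two implications I would record the operator estimates. From (\ref{evaluate for semigroup1}) and the moment identity for $h_\alpha$ one gets $\|P_\alpha(t)\|_{B(X)}\le C$ and $\|AP_\alpha(t)\|_{B(X)}\le Ct^{-\alpha}$; the bound $\|AS_\alpha(t)\|_{B(X)}\le Ct^{-\alpha}$ cannot be obtained from the naive subordination bound (because $\int_0^\infty\theta^{-1}h_\alpha(\theta)\,d\theta$ diverges) and I would instead derive it from the sectorial resolvent representation. On the interpolation class the estimates improve: for $x\in(X,D(A))_{\theta/\alpha,\infty}$, using the characterization $\sup_{s}s^{1-\theta/\alpha}\|AT(s)x\|<\infty$ together with subordination, I expect $\|AP_\alpha(t)x\|+\|AS_\alpha(t)x\|\le Ct^{\theta-\alpha}$. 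I would also use the differentiation identity $\tfrac{d}{dt}S_\alpha(t)=-t^{\alpha-1}AP_\alpha(t)$ and the fundamental identity $A\int_0^t s^{\alpha-1}P_\alpha(s)\,ds=I-S_\alpha(t)$ (verified scalarwise through the Mittag-Leffler relation, i.e. by Laplace transform).

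For sufficiency I would insert the fundamental identity to write
\[
Av(t)=\int_0^t(t-s)^{\alpha-1}AP_\alpha(t-s)\,[g(s)-g(t)]\,ds+[I-S_\alpha(t)]g(t)=:J_1(t)+J_2(t).
\]
In $J_1$ the integrand is bounded by $C(t-s)^{\theta-1}[g]_{C^\theta}$, which is integrable since $\theta>0$; its $\theta$-Hölder continuity follows by splitting $J_1(t)-J_1(\tau)$ over $[0,\tau]$ and $[\tau,t]$ and using the derivative bound on $AP_\alpha$, and this is the routine but technical core of the argument. In $J_2$ I would decompose $g(t)=g(0)+(g(t)-g(0))$: the $g(0)$-part is exactly where condition (2) enters, via $\|[S_\alpha(t)-S_\alpha(s)]g(0)\|\le\int_s^t\tau^{\alpha-1}\|AP_\alpha(\tau)g(0)\|\,d\tau\le C\int_s^t\tau^{\theta-1}\,d\tau\le C(t-s)^\theta$; the remainder is controlled with $\|g(t)-g(0)\|\le[g]_{C^\theta}t^\theta$ and the logarithmic bound $\|S_\alpha(t)-S_\alpha(s)\|\le C\log(t/s)$, distinguishing the regimes $t-s\le s$ and $t-s>s$. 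Collecting the pieces gives $Av\in C^\theta$, hence $Au$ and $D_t^\alpha(u-u_0)\in C^\theta$; existence and uniqueness of the underlying solution are supplied by Proposition \ref{existence}.

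For necessity, $f=D_t^\alpha(u-u_0)+Au\in C^\theta$ is immediate, and closedness of $A$ with continuity of $Au$ at $0$ forces $u_0\in D(A)$ and $Au(0)=Au_0$. Using again $Av(t)=J_1(t)+[I-S_\alpha(t)]g(t)$ with $Av(0)=0$, the Hölder bound $\|Av(t)\|\le Ct^\theta$ minus the already-controlled contributions of $J_1$ and of $[I-S_\alpha(t)](g(t)-g(0))$ yields $\sup_{0<t\le T}t^{-\theta}\|[I-S_\alpha(t)]g(0)\|<\infty$. It then remains to convert this into the semigroup characterization $\sup_{s}s^{-\theta/\alpha}\|[I-T(s)]g(0)\|<\infty$, i.e. $g(0)\in(X,D(A))_{\theta/\alpha,\infty}$. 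This last transfer between the fractional family $S_\alpha$ and the semigroup $T$, with its $\theta\leftrightarrow\theta/\alpha$ rescaling, is a subordination/Tauberian argument and is the step I expect to be the main obstacle; the sufficiency computation of Proposition \ref{space regularity} (whose abstract structure mirrors the present one) suggests the same estimate route works here, and in the worst case one may invoke the results of \cite{CGL}, \cite{Guidetti2} for precisely this equivalence.
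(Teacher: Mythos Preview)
The paper does not prove this proposition at all; it is stated with the tag ``(see \cite{CGL}, \cite{Guidetti2})'' and quoted as a known maximal-regularity result, with no argument given. There is therefore no ``paper's own proof'' to compare against.

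That said, your outline is essentially the argument carried out in the cited references. The reduction to $v=u-u_0$ with forcing $g=f-Au_0$, the decomposition
\[
Av(t)=\int_0^t(t-s)^{\alpha-1}AP_\alpha(t-s)\bigl[g(s)-g(t)\bigr]\,ds+\bigl[I-S_\alpha(t)\bigr]g(t),
\]
and the identification of condition~(2) as precisely what is needed to make $t\mapsto[I-S_\alpha(t)]g(0)$ lie in $C^\theta$, is the Da~Prato--Grisvard--Sinestrari scheme transported to the subordinated setting, which is how \cite{CGL} and \cite{Guidetti2} proceed. Your flagged obstacle in the necessity direction---passing from $\sup_{t}t^{-\theta}\|[I-S_\alpha(t)]x\|<\infty$ to $x\in(X,D(A))_{\theta/\alpha,\infty}$---is genuine; in \cite{Guidetti2} it is handled not by a Tauberian inversion of the subordination formula but by identifying $D_t^\alpha$ with the fractional power $B^\alpha$ of $B=-d/dt$ (as the present paper does in Section~\ref{section. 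Fractional power of operator}) and then applying the abstract sum-of-operators maximal regularity, which gives the $\theta/\alpha$ rescaling directly through the interpolation identity $(C([0,T];X),D(B))_{\theta,\infty}\cap D(B^\alpha)$ versus $(X,D(A))_{\theta/\alpha,\infty}$. Your subordination route should also work, but the operator-sum approach is cleaner and is what the cited sources actually do.
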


When we observe the quantitative property, the next statement is essential.
\begin{prop}\label{max}
	(see \cite{Luchko}, \cite{space fractional}) Let $g\in C([0,T])\cap C^{1}((0,T])$ satisfies $g'\in L^1(0,T)$ and attains its maximum over the interval $[0,T]$ at $t_0\in (0,T]$, then, for all $0<\alpha<1$, 
	\[
	\int_{0}^{t_0}(t_0-\tau)^{-\alpha}g'(\tau)d\tau \ge 0.
	\]
	Moreover, if $g$ is not constant on $[0,t_0]$, then
	\[
	\int_{0}^{t_0}(t_0-\tau)^{-\alpha}g'(\tau)d\tau > 0.
	\]
\end{prop}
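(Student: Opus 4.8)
The plan is to reduce the claim to an integration-by-parts identity in which every resulting term is manifestly of one sign. The natural device is to pass from $g$ to the nonnegative ``defect'' function
\[
h(\tau):=g(t_0)-g(\tau),\qquad \tau\in[0,t_0],
\]
which satisfies $h\ge 0$ (because $t_0$ is a maximizer), $h(t_0)=0$, and $h'=-g'$ on $(0,T]$. In terms of $h$ the target integral becomes $I:=\int_0^{t_0}(t_0-\tau)^{-\alpha}g'(\tau)\,d\tau=-\int_0^{t_0}(t_0-\tau)^{-\alpha}h'(\tau)\,d\tau$, so it suffices to prove that the latter integral is $\le 0$, with strict inequality in the nonconstant case.

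First I would check that $I$ is a genuine (absolutely convergent) integral. Near $\tau=t_0$ the factor $g'$ is continuous, hence bounded, by $g\in C^1((0,T])$, while $(t_0-\tau)^{-\alpha}$ is integrable there since $\alpha<1$; near $\tau=0$ the factor $(t_0-\tau)^{-\alpha}$ is bounded and $g'\in L^1$, so the product is integrable. The heart of the argument is then integration by parts. I would carry it out first on a compact subinterval $[\varepsilon,t_0-\delta]\subset(0,t_0)$, where $g$ is $C^1$ and the computation is classical, obtaining
\[
\int_\varepsilon^{t_0-\delta}(t_0-\tau)^{-\alpha}h'(\tau)\,d\tau
=\big[(t_0-\tau)^{-\alpha}h(\tau)\big]_\varepsilon^{t_0-\delta}
-\alpha\int_\varepsilon^{t_0-\delta}(t_0-\tau)^{-\alpha-1}h(\tau)\,d\tau,
\]
and then let $\varepsilon,\delta\to0^+$. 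For the boundary term at $t_0$ I would use the first-order expansion $h(\tau)=h'(t_0)(\tau-t_0)+o(t_0-\tau)$ (valid since $h$ is $C^1$ at $t_0$ with $h(t_0)=0$), which gives $(t_0-\tau)^{-\alpha}h(\tau)=O\big((t_0-\tau)^{1-\alpha}\big)\to0$ because $1-\alpha>0$; the boundary term at $\varepsilon$ converges to $t_0^{-\alpha}h(0)$ by continuity of $h$ at $0$. The same expansion shows the remaining integrand is $O\big((t_0-\tau)^{-\alpha}\big)$ near $t_0$, hence integrable, so the limit exists. This yields the exact representation
\[
I=t_0^{-\alpha}\big(g(t_0)-g(0)\big)+\alpha\int_0^{t_0}(t_0-\tau)^{-\alpha-1}\big(g(t_0)-g(\tau)\big)\,d\tau,
\]
in which both summands are nonnegative because $g(t_0)$ is the maximum and $\alpha,(t_0-\tau)^{-\alpha-1}>0$; hence $I\ge0$.

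The strict inequality follows at once from this representation. If $g$ is not constant on $[0,t_0]$ there is a point $\tau^*\in[0,t_0)$ with $g(\tau^*)<g(t_0)$. If $\tau^*=0$ the first summand is strictly positive; if $\tau^*\in(0,t_0)$ then, by continuity, $g(t_0)-g>0$ on a neighborhood of $\tau^*$ of positive measure, so the positively weighted integral in the second summand is strictly positive. Either way $I>0$. I expect the only delicate point to be the limiting justification of the integration by parts: one must simultaneously control the singular weight $(t_0-\tau)^{-\alpha-1}$ against the vanishing of $h$ at $t_0$, and accommodate the mere $L^1$-regularity of $g'$ at the origin through the $\varepsilon\to0^+$ limit rather than through pointwise values. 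Everything else is bookkeeping.
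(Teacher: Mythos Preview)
Your proof is correct. The paper does not give its own proof of this proposition; it merely records the statement and cites \cite{Luchko} and \cite{space fractional}. Your argument---passing to the nonnegative defect $h(\tau)=g(t_0)-g(\tau)$, integrating by parts on $[\varepsilon,t_0-\delta]$, and controlling the boundary and integral limits via the $C^1$ expansion at $t_0$ and the $L^1$ hypothesis near $0$---is precisely the standard approach in Luchko's paper, yielding the sign-definite representation
\[
\int_0^{t_0}(t_0-\tau)^{-\alpha}g'(\tau)\,d\tau
= t_0^{-\alpha}\bigl(g(t_0)-g(0)\bigr)
+\alpha\int_0^{t_0}(t_0-\tau)^{-\alpha-1}\bigl(g(t_0)-g(\tau)\bigr)\,d\tau,
\]
from which both the nonstrict and strict inequalities follow immediately. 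There is nothing to add.
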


We note that if $g$ is differentiable, then
\[
\begin{aligned}
D_t^{\alpha}(g-g(0))(t)&=\frac{1}{\Gamma(1-\alpha)}\frac{d}{dt}\int_{0}^{t}(t-\tau)^{-\alpha}(g(\tau)-g(0))d\tau\\
&=\frac{1}{\Gamma(1-\alpha)}\int_{0}^{t}(t-\tau)^{-\alpha} g'(\tau)d\tau
\end{aligned}
\]
where the last term is the so-called Caputo derivative. Therefore, we have to confirm the time differentiability of the solution when we use Proposition\ \ref{max} to (\ref{eq. semilinear problem}), or just (\ref{eq. linear cauchy problem}).

%%%%%%%%%%%%%%%%%%%%%%%%%%%%%%%%%%%%%%%%%%%%%%%%%%%%%%%%%%%%%%%%%%%%%%%%%%%%%%%%%%%%%
%%%%%%%%%%%%%%%%%%%%%%%%%%%%%%%%%%%%%POWER of OPERATOR%%%%%%%%%%%%%%%%%%%%%%%%%%%%%%%%
%%%%%%%%%%%%%%%%%%%%%%%%%%%%%%%%%%%%%%%%%%%%%%%%%%%%%%%%%%%%%%%%%%%%%%%%%%%%%%%%%%%%%%%%%
\section{Fractional power of operator and time-fractional derivative}\label{section. Fractional power of operator}

In this section, we introduce a formalization of the time-fractional derivative in an abstract framework due to \cite{Guidetti1},\cite{Guidetti2}. Let an operator $B$ be as follows;
\begin{equation}\label{def of B}
\left\{
\begin{aligned}
&B: \{v\in C^1([0,T];X): v(0)=0\}\rightarrow C([0,T];X),\\
&Bv =-\frac{dv}{dt}.
\end{aligned}
\right.
\end{equation}
For $0<\alpha<1$, the power of the operator $B$ is as follow;
\begin{equation}\label{def of Balpha}
B^{-\alpha}=\frac{1}{2\pi i}\int_{\Gamma} \lambda^{-\alpha}(\lambda-B)^{-1}d\lambda.
\end{equation}
where we take the path $\Gamma$ as in the Appendix. We can confirm that the formula coincides with,
\begin{equation}\label{charactor of B-alpha}
(B^{-\alpha}f)(t) =\frac{1}{\Gamma(\alpha)}\int_{0}^{t} (t-\tau)^{\alpha-1}f(\tau)d\tau.
\end{equation}

In the  abstract point of view, the inverse of (\ref{def of Balpha}) $B^{\alpha}:=(B^{-\alpha})^{-1}$ is the $\alpha$ power of $B$. And the domain of $B^{\alpha}$ is the range of $B^{-\alpha}$, that is, $D(B^{\alpha})=R(B^{-\alpha})$. On the other hand, we can check the formula (\ref{charactor of B-alpha}) is inverse mapping of $g\mapsto D_t^{\alpha}(g-g(0))$. This implies that $D_t^{\alpha}$ and $B^{\alpha}$ is the same operator. These properties are mentioned in \cite{Guidetti1},\cite{Guidetti2}.

\begin{prop}
	The operator $B$ defined (\ref{def of B}) is sectrial in $C([0,T];X)$, and its fractional power is given by (\ref{charactor of B-alpha}). In particular,
	\[
	B^{\alpha}(g-g(0))=D_t^{\alpha}(g-g(0)).
	\]
\end{prop}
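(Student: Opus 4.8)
The plan is to compute the resolvent of $B$ explicitly, insert it into the Dunford integral (\ref{def of Balpha}), recognise the resulting kernel as the Riemann--Liouville one, and then identify $B^{\alpha}=(B^{-\alpha})^{-1}$ with the map $g\mapsto D_t^{\alpha}(g-g(0))$ through the elementary composition rules for fractional integrals. First I would solve the resolvent equation: for $f\in C([0,T];X)$ and $\lambda\in\mathbb{C}$, the curve $v=(\lambda-B)^{-1}f$ must satisfy $v'+\lambda v=f$ on $[0,T]$ with $v(0)=0$, whence by variation of constants
\[
\bigl((\lambda-B)^{-1}f\bigr)(t)=\int_{0}^{t}e^{-\lambda(t-s)}f(s)\,ds,\qquad 0\le t\le T.
\]
In particular $(\lambda-B)^{-1}$ exists for every $\lambda\in\mathbb{C}$, so $\sigma(B)=\emptyset$ and (\ref{sectrial1}) is vacuous; moreover, for $\lambda$ with $|\arg\lambda|\le\omega<\pi/2$ one has $\mathrm{Re}\,\lambda\ge(\cos\omega)|\lambda|>0$ and hence
\[
\|(\lambda-B)^{-1}f\|_{C([0,T];X)}\le\|f\|_{C([0,T];X)}\sup_{0\le t\le T}\int_{0}^{t}e^{-\mathrm{Re}\,\lambda\,(t-s)}\,ds\le\frac{\|f\|_{C([0,T];X)}}{\mathrm{Re}\,\lambda}\le\frac{1}{\cos\omega}\cdot\frac{\|f\|_{C([0,T];X)}}{|\lambda|},
\]
which is the estimate (\ref{sectrial2}) along the rays of angle $\pm\omega$ that bound the sector carrying the functional calculus. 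Since $0\in\rho(B)$ as well, the integrand $\lambda^{-\alpha}(\lambda-B)^{-1}$ is holomorphic on and near the contour $\Gamma$ of the Appendix (which for this $B$ is taken strictly inside the right half-plane), and $\|\lambda^{-\alpha}(\lambda-B)^{-1}\|\le M|\lambda|^{-\alpha-1}$ is integrable there because $0<\alpha<1$; Cauchy's theorem then makes (\ref{def of Balpha}) independent of the admissible choice of $\Gamma$.

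Next I would substitute the resolvent formula into (\ref{def of Balpha}) and interchange the two integrations, which is legitimate by the absolute convergence just noted, obtaining for $0\le t\le T$
\[
(B^{-\alpha}f)(t)=\int_{0}^{t}\Bigl(\frac{1}{2\pi i}\int_{\Gamma}\lambda^{-\alpha}e^{-\lambda(t-s)}\,d\lambda\Bigr)f(s)\,ds.
\]
For $\xi=t-s>0$ the substitution $\mu=\lambda\xi$ leaves the homogeneous contour $\Gamma$ invariant (up to the inessential radius of the arc about the origin) and gives $\frac{1}{2\pi i}\int_{\Gamma}\lambda^{-\alpha}e^{-\lambda\xi}\,d\lambda=\xi^{\alpha-1}\cdot\frac{1}{2\pi i}\int_{\Gamma}\mu^{-\alpha}e^{-\mu}\,d\mu$, and with the branch of $\lambda^{-\alpha}$ and the orientation of $\Gamma$ fixed in the Appendix the remaining integral is the Hankel-type representation of $1/\Gamma(\alpha)$. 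Hence $(B^{-\alpha}f)(t)=\frac{1}{\Gamma(\alpha)}\int_{0}^{t}(t-s)^{\alpha-1}f(s)\,ds$, which is precisely (\ref{charactor of B-alpha}); in particular $B^{-\alpha}$ coincides with the Riemann--Liouville integral $I^{\alpha}$, which is injective on $C([0,T];X)$ (for instance by uniqueness in Titchmarsh's convolution theorem), so that $B^{\alpha}:=(B^{-\alpha})^{-1}$ is well defined with $D(B^{\alpha})=R(I^{\alpha})$.

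For the last assertion, suppose $g-g(0)\in D(B^{\alpha})$ and write $g-g(0)=I^{\alpha}\psi$ with $\psi:=B^{\alpha}(g-g(0))$. Writing the Riemann--Liouville derivative as $D_t^{\alpha}\phi=\frac{d}{dt}I^{1-\alpha}\phi$ and using the semigroup property $I^{1-\alpha}I^{\alpha}=I^{1}$ together with $\frac{d}{dt}I^{1}\psi=\psi$, one obtains
\[
D_t^{\alpha}(g-g(0))=\frac{d}{dt}I^{1-\alpha}I^{\alpha}\psi=\frac{d}{dt}I^{1}\psi=\psi=B^{\alpha}(g-g(0)),
\]
which is the claimed identity; equivalently, $D_t^{\alpha}$ is the two-sided inverse of $I^{\alpha}$ on $R(I^{\alpha})$.

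I expect the delicate point to be the resolvent estimate that underlies the word ``sectorial''. Unlike a genuine sectorial operator, $B=-d/dt$ on $C([0,T];X)$ does \emph{not} satisfy (\ref{sectrial2}) on the whole complement of a sector: testing against $f(s)=e^{-i\tau s}x$ with $\|x\|=1$ already gives $\|(\lambda-B)^{-1}\|\ge T$ for every purely imaginary $\lambda$, so the constant $M$ must be allowed to degenerate as the opening angle tends to $\pi/2$, and one has to keep the contour $\Gamma$ strictly inside the right half-plane throughout. Relatedly, one must pin down the branch of $\lambda^{-\alpha}$ and the orientation of $\Gamma$ so that the Hankel integral really produces $(t-s)^{\alpha-1}/\Gamma(\alpha)$ with the correct sign. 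This bookkeeping --- carried out in \cite{Guidetti1},\cite{Guidetti2} and in the Appendix --- is where the care lies, while the variation-of-constants formula, the application of Fubini's theorem, and the fractional-integral composition rules are all routine.
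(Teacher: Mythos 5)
Your proposal follows essentially the same route as the paper's proof: solve the resolvent equation by variation of constants, insert the resulting formula into the Dunford integral (\ref{def of Balpha}), and identify the inner contour integral with $(t-s)^{\alpha-1}/\Gamma(\alpha)$ via the Hankel representation. You are in fact somewhat more careful than the paper on the two points it glosses over: you note that the bound $\|(\lambda-B)^{-1}\|\le M|\lambda|^{-1}$ only holds with $M=1/\cos\omega$ on rays strictly inside the right half-plane (so the literal estimate (\ref{sectrial2}) on the complement of a sector fails, and the paper's uniform $1/|\lambda|$ bound is not valid for complex $\lambda$), and you actually verify the identity $B^{\alpha}(g-g(0))=D_t^{\alpha}(g-g(0))$ through the composition rule for fractional integrals, which the paper only asserts.
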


\begin{proof}
	For all $\lambda \in \mathbb{C}$ and $f\in C([0,T];X)$,
	\[
	\begin{aligned}
	v = (\lambda -B)^{-1}f&\Leftrightarrow \frac{dv}{dt}+\lambda v = f,\ v(0)=0\\
	&\Leftrightarrow v(t)=\int_{0}^{t} e^{-\lambda(t-s)}f(s)ds
	\end{aligned}
	\]
	Therefore, the power of $B$ must be calculated as follow;
	\[
	\begin{aligned}
	(B^{-\alpha}f)(t) &=\frac{1}{2\pi i} \int_{\Gamma} \lambda^{-\alpha} \left[ \int_{0}^{t}e^{-\lambda(t-s)}f(s)ds \right]d\lambda\\
	&=\frac{1}{2\pi i} \int_{0}^{t} f(s) \left[ \int_{\Gamma} \lambda^{-\alpha} e^{-\lambda (t-s)}d\lambda \right] ds.
	\end{aligned}
	\]
	So, It is sufficient to show that for all $b> 0$,
	\[
	\frac{1}{2\pi i}\int_{\Gamma} \lambda^{-\alpha} e^{-b\lambda}d\lambda = \frac{1}{\Gamma(\alpha)} b^{\alpha-1}
	\]
	holds. This formula is obtained by the Hankel's formula and the transformation of the path $\Gamma$ to the Hankel contour. The resolvent estimate (\ref{sectrial2}) shall be obtained easily;
	\[
	\begin{aligned}
	\|v(t)\| &\le \int_{0}^{t}|e^{-\lambda(t-s)}|\|f(s)\|ds\\
	&\le \frac{1}{|\lambda|} \left[ 1- e^{-|\lambda|t} \right]\|f\|_{C([0,T];X)}\le \frac{1}{|\lambda|} \|f\|_{C([0,T];X)}.
	\end{aligned}
	\]
\end{proof}

\begin{prop}
	(see \cite{Guidetti1},\cite{Guidetti2}) Let $B$ be as in (\ref{def of B}). Then,
	\begin{equation}
	(C([0,T];X),\ D(B))_{\theta,\infty}=\{ f\in C^{\theta}([0,T];X): f(0)=0 \}
	\end{equation}
	satisfies with the equivalent norms. 
\end{prop}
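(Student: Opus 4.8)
The plan is to compute the $K$-functional
$K(\xi,f)=K\bigl(\xi,f;C([0,T];X),D(B)\bigr)=\inf_{g\in D(B)}\bigl(\|f-g\|_{C}+\xi\,\|g\|_{D(B)}\bigr)$
precisely enough to identify the space $(C([0,T];X),D(B))_{\theta,\infty}$, which by definition consists of those $f$ with $\|f\|_{\theta,\infty}:=\sup_{\xi>0}\xi^{-\theta}K(\xi,f)<\infty$; here $D(B)=\{v\in C^{1}([0,T];X):v(0)=0\}$ carries the graph norm $\|v\|_{D(B)}\simeq\|v\|_{C}+\|v'\|_{C}$. Two preliminary remarks organise the argument. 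First, $K(\xi,f)\le\|f-0\|_{C}=\|f\|_{C}$ for every $\xi$ (take $g=0\in D(B)$), while $\|f\|_{C}\le\|f-g\|_{C}+\|g\|_{D(B)}\le\|f-g\|_{C}+\xi\|g\|_{D(B)}$ for $\xi\ge1$ gives $\|f\|_{C}\le K(1,f)\le\|f\|_{\theta,\infty}$. Second, $D(B)$ is \emph{not} dense in $C([0,T];X)$ but has closure $\{f\in C([0,T];X):f(0)=0\}$, so every $f$ in the interpolation space automatically satisfies $f(0)=0$; thus the real content of the proposition is the norm equivalence $\|f\|_{\theta,\infty}\simeq\|f\|_{C^{\theta}}$ for continuous $X$-valued $f$ with $f(0)=0$.

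For the embedding $\{f\in C^{\theta}([0,T];X):f(0)=0\}\hookrightarrow(C([0,T];X),D(B))_{\theta,\infty}$ I would use the resolvent-type regularisation as competitor: for $\epsilon>0$ put
\[
g_{\epsilon}(t):=\frac1\epsilon\int_{0}^{t}e^{-(t-s)/\epsilon}f(s)\,ds,\qquad t\in[0,T].
\]
Then $g_{\epsilon}\in C^{1}([0,T];X)$ with $g_{\epsilon}(0)=0$, hence $g_{\epsilon}\in D(B)$, and $\epsilon g_{\epsilon}'=f-g_{\epsilon}$. From the identity $f(t)-g_{\epsilon}(t)=\frac1\epsilon\int_{0}^{t}e^{-(t-s)/\epsilon}\bigl(f(t)-f(s)\bigr)\,ds+e^{-t/\epsilon}f(t)$, together with $\|f(t)-f(s)\|\le[f]_{C^{\theta}}(t-s)^{\theta}$ and $\|f(t)\|=\|f(t)-f(0)\|\le[f]_{C^{\theta}}t^{\theta}$ (this is where $f(0)=0$ enters), one obtains $\|f-g_{\epsilon}\|_{C}\le C[f]_{C^{\theta}}\epsilon^{\theta}$ and therefore $\|g_{\epsilon}'\|_{C}=\epsilon^{-1}\|f-g_{\epsilon}\|_{C}\le C[f]_{C^{\theta}}\epsilon^{\theta-1}$. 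Hence $K(\xi,f)\le\|f-g_{\epsilon}\|_{C}+\xi\|g_{\epsilon}\|_{D(B)}\le C[f]_{C^{\theta}}\bigl(\epsilon^{\theta}+\xi\,\epsilon^{\theta-1}\bigr)$, and choosing $\epsilon=\xi$ gives $\|f\|_{\theta,\infty}\le C\|f\|_{C^{\theta}}$.

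For the reverse inclusion, let $f\in C([0,T];X)$ with $N:=\|f\|_{\theta,\infty}<\infty$, and for each $0<\xi\le T$ pick $g_{\xi}\in D(B)$ with $\|f-g_{\xi}\|_{C}+\xi\|g_{\xi}\|_{D(B)}\le2K(\xi,f)\le2N\xi^{\theta}$. From $g_{\xi}(0)=0$ and $\|f-g_{\xi}\|_{C}\le2N\xi^{\theta}\to0$ we get $f(0)=0$, and for $0\le a<b\le T$,
\[
\|f(b)-f(a)\|\le2\|f-g_{\xi}\|_{C}+\|g_{\xi}'\|_{C}(b-a)\le4N\xi^{\theta}+2N\xi^{\theta-1}(b-a);
\]
taking $\xi=b-a$ yields $[f]_{C^{\theta}}\le6N$. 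Combined with $\|f\|_{C}\le N$ this gives $\|f\|_{C^{\theta}}\le7\|f\|_{\theta,\infty}$, which closes the equivalence. (Alternatively, since $B$ is sectorial by the preceding proposition, one could invoke the abstract description of real interpolation spaces of a sectorial operator; but one would then still have to account for $D(B)$ being non-dense — equivalently, that the translation semigroup generated by $B$ is strongly continuous only on the closed subspace $\{f(0)=0\}$ and is not analytic — so the self-contained computation above seems cleaner.)

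The obstacle here is not analytic but organisational: one must (i) produce competitors that genuinely lie in $D(B)$, i.e.\ are $C^{1}$ up to the endpoints \emph{and} vanish at $t=0$ — precisely the point at which the boundary condition built into $D(B)$ forces $f(0)=0$ on the Hölder side — and (ii) keep track of the constants in both directions to turn the set equality into a genuine norm equivalence. Once the explicit competitor $g_{\epsilon}$ and the two elementary inequalities $K(\xi,f)\le\|f\|_{C}$ and $\|f\|_{C}\le K(\xi,f)$ ($\xi\ge1$) are available, everything else is routine.
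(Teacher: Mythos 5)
The paper does not actually prove this proposition---it is quoted from Guidetti's papers \cite{Guidetti1,Guidetti2}---so there is no internal proof to compare against; what you supply is a complete, self-contained argument, and it is correct. It is the classical $K$-functional proof that $(C,C^1)_{\theta,\infty}=C^{\theta}$, adapted to the zero-initial-value subspace: your regularisation $g_{\epsilon}=\lambda(\lambda-B)^{-1}f$ with $\lambda=1/\epsilon$ is exactly the resolvent computed in the paper's preceding proposition, the identity $\epsilon g_{\epsilon}'=f-g_{\epsilon}$ turns the sup-norm error estimate into the derivative estimate, and near-minimisers of $K(\xi,f)$ give the Hölder seminorm in the converse direction; the boundary condition $f(0)=0$ is correctly accounted for on both sides (via $\|f(t)\|\le[f]_{C^{\theta}}t^{\theta}$ in the upper bound, and via $g_{\xi}(0)=0$ together with $\|f-g_{\xi}\|_{C}\to 0$ in the lower bound). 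Two minor points worth tightening: in the estimate $K(\xi,f)\le C[f]_{C^{\theta}}\bigl(\epsilon^{\theta}+\xi\epsilon^{\theta-1}\bigr)$ you silently absorb the term $\xi\|g_{\epsilon}\|_{C}\le\xi\|f\|_{C}$, which is of the right order $\xi^{\theta}$ only for $\xi$ bounded (say $\xi\le\max(1,T)$); since you already recorded the trivial bound $K(\xi,f)\le\|f\|_{C}$, the large-$\xi$ range is covered and the constants merely acquire a dependence on $T$ and $\theta$, but this should be said. Also, your preliminary claim that membership in $(C([0,T];X),D(B))_{\theta,\infty}$ forces $f\in\overline{D(B)}$ is valid for $\theta>0$ because $\mathrm{dist}_{C}(f,D(B))\le K(\xi,f)\le\|f\|_{\theta,\infty}\,\xi^{\theta}\to0$, though your later direct derivation of $f(0)=0$ from the near-minimisers makes it redundant; the side remarks about the translation semigroup are not load-bearing. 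Compared with invoking the abstract interpolation theory of sectorial operators (the route implicit in the citation), your elementary computation has the advantage of handling the non-density of $D(B)$ explicitly and of producing explicit constants in the norm equivalence.
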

Proposition \ref{inclusion of domain power of op} deduces the inclusion for $\beta >\theta$,
\begin{equation}\label{eq. inclusion of D(Balpha)}
\{ f\in C^{\beta}([0,T];X);\ f(0)=0\} \subset D(B^{\theta})\subset \{ f\in C^{\theta}([0,T];X);\ f(0)=0\}.
\end{equation}

In symbolic terms, a H\"{o}lder continuous function even slightly stronger than $\alpha$ is necessarily $\alpha$-order differentiable, and its derivative is continuous on $[0,T]$. Also, if a $\alpha$-order derivative is continuous on $[0,T]$, then the original function is necessarily $\alpha$-order H\"{o}lder continuous. This is an analogy with the integro-differential.

%%%%%%%%%%%%%%%%%%%%%%%%%%%%%%%%%%%%%%追記%%%%%%%%%%%%%%%%%%%%%%%%%%%

\begin{rem}
	Let $Y=L^p(0,T),D(B)=W^{1,p}(0,T)$, then we get
	\[
	D(B^{\alpha})\subset (L^{p}(0,T), W^{1,p}(0,T))_{\alpha,\infty}=B^{\alpha,p,\infty}(0,T)
	\]
	where we define Besov space as follow;
	\[
	B^{s,p,q}:=(L^p,W^{m,p})_{s/m,q}.
	\]
	For any bounded $\Omega\subset \mathbb{R}^N$ with sufficiently smooth boundary, we employ the following inclusion  (see, \cite{Adams}),
	\[
	sp>N\Rightarrow B^{s,p,q}(\Omega)\subset C(\overline{\Omega}).
	\]
	When $N=1, m=1$, this embedding holds if $\alpha>1/p$. Therefore, in the  Lebesgue and Sobolev space perspective, $\alpha>1/p$ diffirentiable function naturally become continuous. This fact has already been mentioned in \cite{Akagi} when $p=2$ and $X$ is Hilbert space.
\end{rem}

%%%%%%%%%%%%%%%%%%%%%%%%%%%%%%%%%%%%%%%%%%%%%%%%%%%%%%%%%%%%%%%%%%%%%%%%%%%%%%%
%%%%%%%%%%%%%%%%%%%%%%%%%%%%%%%%%%%%%%Proof of theorem 1 %%%%%%%%%%%%%%%%%%%%%%
%%%%%%%%%%%%%%%%%%%%%%%%%%%%%%%%%%%%%%%%%%%%%%%%%%%%%%%%%%%%%%%%%%%%%%%%%%%%%%%
\section{Proof of Theorem \ref{theo. main theorem1}}\label{section. main theorem1}
 
We first show the existence of time global mild-solution in the framework of continuous functions. Then, the regularity guarantees that the mild-solution is actually a classical one. We solve the semi-linear problem (\ref{eq. semilinear problem}) by using the Banach-fixed point theorem with reference to \cite{Akagi}. The idea is to define a function space with a small enough norm for given $T$ and $L$. We equip the norm
\[
\|u\|_{\mu} = \sup_{0\le t\le T}e^{-\mu t} \|u(t)\|
\]
to $C([0,T];X)$. We define the map $S:C([0,T];X)\rightarrow C([0,T];X)$ by
\[
\begin{aligned}
S:v\mapsto u(t)&=\int_{0}^{\infty}h_{\alpha}(\theta)T(t^{\alpha}\theta)d\theta u_0\\
&\hspace{3cm}+\alpha \int_{0}^{t}\int_{0}^{\infty} (t-\tau)^{\alpha-1}\theta h_{\alpha}(\theta)T((t-\tau)^{\alpha-1}\theta)\left[f(\tau) -B(v(\tau)) \right]d\tau d\theta.
\end{aligned}
\]

%%%%%%%%%%%%%%%%%%%%%%%%%%%%%%%%%%%%%%%%%%%%%%%%%%%%%%%%%%%%%%%%%%%%%%%%%%%%%%%%%%%%%
%%%%%%%%%%%%%%%%%%%%%%%%%%%%%%%%Contraction Mapping%%%%%%%%%%%%%%%%%%%%%%%%%%%%%%%%%%
\begin{prop}\label{prop. fixed point theorem for S}
	For all $L>0,\ T>0$, there exists sufficiently large $\mu >0$ such that the $S$ becomes contraction mapping.
\end{prop}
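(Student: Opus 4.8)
The plan is to prove that $S$ is a contraction on $(C([0,T];X),\|\cdot\|_\mu)$ once $\mu$ is chosen large enough, and --- this is the whole point of the weight --- to arrange the estimate so that the required size of $\mu$ depends only on $C_1$, $L$, $\alpha$ and not on $T$. First I would observe that the first summand of $Sv$ is independent of $v$, so that for $v_1,v_2\in C([0,T];X)$ only the $B$-term survives in the difference:
\[
(Sv_1-Sv_2)(t)=-\alpha\int_0^t\!\!\int_0^\infty (t-\tau)^{\alpha-1}\theta\, h_\alpha(\theta)\, T((t-\tau)^\alpha\theta)\bigl[B(v_1(\tau))-B(v_2(\tau))\bigr]\,d\theta\, d\tau .
\]
Using $\|T(s)\|_{B(X)}\le C_1$ from (\ref{evaluate for semigroup1}), the Lipschitz bound (\ref{eq. lipschitz condition}), and the moment identity $\int_0^\infty\theta h_\alpha(\theta)\,d\theta=\Gamma(2)/\Gamma(1+\alpha)=1/\Gamma(1+\alpha)$, this gives at once
\[
\|(Sv_1-Sv_2)(t)\|\le\frac{\alpha C_1 L}{\Gamma(1+\alpha)}\int_0^t (t-\tau)^{\alpha-1}\|v_1(\tau)-v_2(\tau)\|\,d\tau .
\]

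The second step is to insert the exponential weight. Multiplying by $e^{-\mu t}$, splitting $e^{-\mu t}=e^{-\mu(t-\tau)}e^{-\mu\tau}$, bounding $e^{-\mu\tau}\|v_1(\tau)-v_2(\tau)\|\le\|v_1-v_2\|_\mu$, and substituting $s=t-\tau$ yields
\[
e^{-\mu t}\|(Sv_1-Sv_2)(t)\|\le\frac{\alpha C_1 L}{\Gamma(1+\alpha)}\|v_1-v_2\|_\mu\int_0^t s^{\alpha-1}e^{-\mu s}\,ds\le\frac{\alpha C_1 L}{\Gamma(1+\alpha)}\,\Gamma(\alpha)\mu^{-\alpha}\,\|v_1-v_2\|_\mu .
\]
Since $\alpha\Gamma(\alpha)=\Gamma(1+\alpha)$ this is exactly $\|Sv_1-Sv_2\|_\mu\le C_1 L\,\mu^{-\alpha}\|v_1-v_2\|_\mu$, so any $\mu>(C_1L)^{1/\alpha}$ makes $S$ a strict contraction. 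The decisive move is that the truncated integral $\int_0^t s^{\alpha-1}e^{-\mu s}\,ds$ was dominated by the full integral over $(0,\infty)$, so the threshold for $\mu$ carries no dependence on $T$; this is what later feeds the uniform-in-time and approximation arguments.

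It then remains to verify the remaining hypotheses of the Banach fixed point theorem. Completeness of $(C([0,T];X),\|\cdot\|_\mu)$ is immediate since $e^{-\mu T}\|\cdot\|_{C([0,T];X)}\le\|\cdot\|_\mu\le\|\cdot\|_{C([0,T];X)}$, so $\|\cdot\|_\mu$ is equivalent to the sup-norm. For $S\bigl(C([0,T];X)\bigr)\subset C([0,T];X)$: the $v$-independent term is continuous on $[0,T]$ by strong continuity of $T(\cdot)$, $u_0\in\overline{D(A)}$, and dominated convergence (using $\int_0^\infty h_\alpha=1$); and for $v\in C([0,T];X)$ the map $\tau\mapsto f(\tau)-B(v(\tau))$ lies in $C((0,T];X)$ with an $L^1$-integrable singularity at $\tau=0$ inherited from $f\in{\cal F}^{r,\beta}\subset C((0,T];X)$ (here $r>0$ is used), so the second term is precisely the Duhamel term in the mild-solution formula (\ref{mild sol}) for (\ref{eq. linear cauchy problem}) with external force $f-B(v)$ and initial datum $u_0$, which is continuous on $[0,T]$ by the linear theory behind Proposition \ref{existence} (equivalently, by the convolution structure of the kernel $s\mapsto\alpha s^{\alpha-1}\int_0^\infty\theta h_\alpha(\theta)T(s^\alpha\theta)\,d\theta$). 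I do not expect a genuine obstacle: the contraction estimate is a one-line consequence of the standard semigroup bounds, and the only point calling for a little care is the continuity of this Duhamel term up to $t=0$ in the presence of the weighted-Hölder singularity of $f$.
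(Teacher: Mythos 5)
Your proof is correct and follows essentially the same route as the paper: the identical contraction estimate in the weighted norm $\|\cdot\|_\mu$, using $\|T\|_{B(X)}\le C_1$, the Lipschitz bound, and the moment identity for $h_\alpha$. The only (harmless) difference is that you bound $\int_0^t s^{\alpha-1}e^{-\mu s}\,ds$ by $\Gamma(\alpha)\mu^{-\alpha}$ to get an explicit, $T$-independent threshold for $\mu$, whereas the paper simply notes that $A_\mu=\int_0^T e^{-\mu\tau}\tau^{\alpha-1}\,d\tau\to 0$ as $\mu\to\infty$; your added checks of completeness and of $S$ mapping $C([0,T];X)$ into itself are fine but not part of the paper's argument.
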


\begin{proof}
	
	For arbitrary $v_1,v_2\in C([0,T];X)$, let $u_1=Sv_1, u_2=Sv_2$. We can easily see that
	\[
	\begin{aligned}
	\| u_1(t)-u_2(t)\|&\le  \frac{LC_1\alpha}{\Gamma(1+\alpha)} \int_{0}^{t}  e^{\mu t} e^{-\mu(t-\tau)}e^{-\mu \tau} (t-\tau)^{\alpha-1}\| v_1(\tau)-v_2(\tau)\|d\tau\\
	e^{-\mu t}\| u_1(t)-u_2(t)\|&\le \frac{LC_1\alpha}{\Gamma(1+\alpha)} \int_{0}^{t}  e^{-\mu(t-\tau)} (t-\tau)^{\alpha-1}d\tau\| v_1-v_2\|_{\mu} .
	\end{aligned}
	\]
	We note that 
	\[
	\int_{0}^{t}  e^{-\mu(t-\tau)} (t-\tau)^{\alpha-1}d\tau\le \int_{0}^T e^{-\mu \tau}\tau^{\alpha-1}d\tau
	\]
	and
	\[
	e^{-\mu \tau}\tau^{\alpha-1}\le \tau^{\alpha-1},\ \ e^{-\mu \tau}\tau^{\alpha-1}\rightarrow 0\ \mbox{pointwize in}\ (0,T)\ \mbox{as}\ \mu\rightarrow\infty
	\]
	So, we set
	\[
	A_\mu = \int_{0}^T e^{-\mu \tau}\tau^{\alpha-1}d\tau \rightarrow 0\ \mbox{as}\ \mu\rightarrow \infty
	\]
	to conclude
	\[
	\|u_1-u_2\|_{\mu}=\sup_{0\le t\le T}e^{-\mu t}\| u_1(t)-u_2(t)\|\le A_{\mu}\frac{LC_1\alpha}{\Gamma(1+\alpha)} \|v_1-v_2\|_{\mu}.
	\]
	In particular, we choose so large $\mu>0$ that $A_{\mu}LC_1\alpha/\Gamma(1+\alpha)<1$.
\end{proof}

%%%%%%%%%%%%%%%%%%%%%%%%%%%%%%%%%%%%%%%%%%%%%%%%%%%%%%%%%%%%%%%%%%%%%%%%%%%%%%%%%%%%%
%%%%%%%%%%%%%%%%%%%%%%%%%%%%%%%%Continuity of S_0(t)%%%%%%%%%%%%%%%%%%%%%%%%%%%%%%%%%%
\begin{prop}\label{prop. time regularity of S_0}
	Let
	\[
	S_0(t)=\int_{0}^{\infty}h_{\alpha}(\theta)T(t^{\alpha}\theta) d\theta u_0
	\]
	for $u_0\in X$.
	\begin{enumerate}
		\item If $u_0\in \overline{D(A)}$, then $S_0\in{\cal F}^{r,\beta}((0,T];X))$ for all $0<\beta<r<1$ and $\beta<\alpha$. 
		\item If $u_0\in D(A^{q})$ for $\alpha^{-1}\beta\le q\le 1$ and $\beta<\alpha$, then $S_0\in C^{\beta}([0,T];X)$.
	\end{enumerate}
\end{prop}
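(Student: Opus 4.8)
The plan is to analyze $S_0(t)=\int_0^\infty h_\alpha(\theta)T(t^\alpha\theta)\,d\theta\,u_0$ by exploiting the subordination structure: $S_0(t)$ is an average of the analytic semigroup $T(\cdot)$ evaluated at the random time $t^\alpha\theta$ against the stable density $h_\alpha$. Writing $S_\alpha(t):=S_0(t)$, the key identity I would use is that for $0<\delta\le 1$,
\[
S_\alpha(t)-S_\alpha(s)=\int_0^\infty h_\alpha(\theta)\bigl[T(t^\alpha\theta)-T(s^\alpha\theta)\bigr]\,d\theta\,u_0,
\]
and then estimate the bracket. For part (1), since $u_0\in\overline{D(A)}$ we only have strong continuity of $T$ at $0$, so the two required properties are: continuity of $t^{1-r}S_\alpha(t)$ up to $t=0$ with a limit (here the limit is $0$ because $r<1$ and $S_\alpha$ is bounded by $C_1\|u_0\|$), and the weighted Hölder bound with $w_{S_\alpha}(t)\to0$. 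First I would treat $u_0\in D(A)$ (dense-range sense is enough by approximation): then $\|[T(t^\alpha\theta)-T(s^\alpha\theta)]u_0\|\le\int_{s^\alpha\theta}^{t^\alpha\theta}\|AT(\sigma)u_0\|\,d\sigma\le\theta(t^\alpha-s^\alpha)\|Au_0\|$, so integrating against $h_\alpha$ (which has finite first moment $\Gamma(2)/\Gamma(1+\alpha)$) gives $\|S_\alpha(t)-S_\alpha(s)\|\le C(t^\alpha-s^\alpha)\|Au_0\|\le C(t-s)^\alpha\|Au_0\|$ using subadditivity of $\sigma\mapsto\sigma^\alpha$. Thus for $u_0\in D(A)$ one gets genuine $C^\alpha([0,T];X)$ regularity, hence a fortiori the weighted-Hölder bound with exponent $\beta<\alpha$; the weight $s^{1-r+\beta}$ is harmless since it is bounded. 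The passage to general $u_0\in\overline{D(A)}$ is where I expect the real work: approximate $u_0$ by $u_0^n\in D(A)$, so $S_\alpha^n\to S_\alpha$ uniformly with rate $C_1\|u_0-u_0^n\|$; then split $w_{S_\alpha}(t)\le w_{S_\alpha-S_\alpha^n}(t)+w_{S_\alpha^n}(t)$, where the first term is $\le C\sup_{s<t}\frac{s^{1-r+\beta}\cdot 2C_1\|u_0-u_0^n\|}{(t-s)^\beta}$ — and here I need $\beta<r$ so that $s^{1-r+\beta}(t-s)^{-\beta}$ stays bounded as $s\to t$ (indeed $s^{1-r+\beta}/(t-s)^\beta\le t^{1-r+\beta}/(t-s)^\beta$ is not bounded near $s=t$!).

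So the honest estimate of the first term must be done more carefully: for $s$ close to $t$ use the Hölder-in-time control of $S_\alpha^n$ (which costs $\|Au_0^n\|$, large), and for $s$ away from $t$ use the uniform bound $2C_1\|u_0-u_0^n\|$. Concretely, I would split the supremum over $s\in[0,t]$ at $s=t-\epsilon$: for $t-s\le\epsilon$ bound $s^{1-r+\beta}\|S_\alpha(t)-S_\alpha(s)\|/(t-s)^\beta$ directly using $\|S_\alpha(t)-S_\alpha(s)\|\le C(t-s)^\alpha\|Au_0^n\|+$ (uniform error), getting $C\epsilon^{\alpha-\beta}T^{1-r+\beta}\|Au_0^n\|+$ small; for $t-s\ge\epsilon$ bound the difference by $2C_1\|u_0-u_0^n\|\cdot T^{1-r+\beta}\epsilon^{-\beta}$. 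Then the standard diagonal argument — first fix $n$ large to kill the uniform-error contributions, then send $t\to0$ (which sends the whole thing to $0$ since $S_\alpha^n\in C^\alpha$ with $S_\alpha^n(0)=u_0^n$ forces $w_{S_\alpha^n}(t)\le Ct^{\alpha-\beta}\|Au_0^n\|\to0$) — closes part (1). The same estimates give property (2) of Definition \ref{weighted holder}.

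For part (2), with $u_0\in D(A^q)$, $\alpha^{-1}\beta\le q\le1$, I would repeat the bracket estimate but now only afford $A^q$: using $[T(t^\alpha\theta)-T(s^\alpha\theta)]u_0 = [T((t^\alpha-s^\alpha)\theta)-I]T(s^\alpha\theta)u_0$ together with $\|[T(\tau)-I]A^{-q}\|\le C_q\tau^q$ from (\ref{evaluate for semigroup2}) and $\|A^q T(s^\alpha\theta)\|\le C_q(s^\alpha\theta)^{-q}$... but that blows up at $s=0$. The cleaner route is $\|[T(t^\alpha\theta)-T(s^\alpha\theta)]u_0\|=\|[T((t^\alpha-s^\alpha)\theta)-I]A^{-q}\cdot A^q u_0\|\le C_q((t^\alpha-s^\alpha)\theta)^q\|A^qu_0\|$, valid for $0\le q\le1$ with no singularity. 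Integrating against $h_\alpha$ requires the $q$-th moment $\int_0^\infty\theta^q h_\alpha(\theta)\,d\theta=\Gamma(1+q)/\Gamma(1+\alpha q)<\infty$, giving $\|S_\alpha(t)-S_\alpha(s)\|\le C(t^\alpha-s^\alpha)^q\|A^qu_0\|\le C(t-s)^{\alpha q}\|A^qu_0\|$ by subadditivity again. Since $\alpha q\ge\beta$, this is exactly $C^\beta([0,T];X)$ regularity (on the full closed interval, including $t=0$, because $S_\alpha(0)=u_0$ and the estimate is uniform).

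The main obstacle, as flagged above, is part (1): the weighted Hölder seminorm combines a weight $s^{1-r+\beta}$ that vanishes at $s=0$ with a factor $(t-s)^{-\beta}$ that blows up at $s=t$, so a crude uniform bound on $\|S_\alpha(t)-S_\alpha(s)\|$ does not suffice — one genuinely needs the interplay between the $D(A)$-approximation rate in the uniform norm and the $C^\alpha$-rate (with large constant) to run the $\epsilon$-splitting and diagonal argument. Parts about moments of $h_\alpha$ and subadditivity of $x\mapsto x^\alpha$ are routine. I would also note that the embedding ${\cal F}^{r,\beta'}\subset{\cal F}^{r,\beta}$ lets me prove part (1) for $\beta$ as large as desired below $\min(\alpha,r)$ and get the stated range for free.
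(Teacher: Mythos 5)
Your part (2) is essentially the paper's own argument (factor the increment through $T((t^\alpha-s^\alpha)\theta)-I$, use $\|[T(\tau)-I]A^{-q}\|\le C_q\tau^q$, the $q$-th moment of $h_\alpha$ and subadditivity of $x\mapsto x^\alpha$); the only blemish is that your ``cleaner route'' writes $[T(t^\alpha\theta)-T(s^\alpha\theta)]u_0=[T((t^\alpha-s^\alpha)\theta)-I]A^{-q}A^qu_0$, which is not an identity --- you must keep the factor $T(s^\alpha\theta)$, as the paper does, and bound it by $C_1$; the final estimate is unchanged.

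Part (1), however, has a genuine gap, and it sits exactly where you flagged the difficulty. In your $\epsilon$-splitting with a fixed approximant $u_0^n\in D(A)$, the region $t-s\le\epsilon$ produces the term $s^{1-r+\beta}\,2C_1\|u_0-u_0^n\|/(t-s)^\beta$, which you call ``small'': it is not --- for fixed $n$ it is unbounded as $t-s\to0$ (the weight $s^{1-r+\beta}$ does not vanish near the diagonal when $t$ is away from $0$, e.g.\ on $[T/2,T]$ the seminorm demands genuine $\beta$-H\"older continuity there). Repairing this by choosing $n=n(t-s)$ would require a quantitative trade-off between $\|u_0-u_0^n\|$ and $\|Au_0^n\|$, i.e.\ a bound of the type $\inf_v\{\|u_0-v\|+\delta^\alpha\|Av\|\}\lesssim\delta^\beta$, which amounts to $u_0\in(X,D(A))_{\beta/\alpha,\infty}$ --- strictly stronger than $u_0\in\overline{D(A)}$ --- so the approximation/diagonal scheme cannot reach the stated generality. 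The missing ingredient is the paper's direct smoothing estimate, valid for \emph{every} $x\in X$: writing $T(t)x-T(s)x=\int_s^t AT(\tau)x\,d\tau$ and factoring $AT(\tau)=A^{1-\eta}T(\tau-s)\,A^{\eta}T(s)$ gives $\|T(t)x-T(s)x\|\le C_\eta(t-s)^{\eta}s^{-\eta}\|x\|$; subordinating with $\eta=\beta/\alpha$ yields $\|S_0(t)-S_0(s)\|\le C(t-s)^{\beta}s^{-\beta}\|u_0\|$, so that $s^{1-r+\beta}\|S_0(t)-S_0(s)\|/(t-s)^\beta\le Cs^{1-r}\|u_0\|$, which is bounded and tends to $0$ as $t\to0$ --- giving the seminorm bound and condition 3 of Definition \ref{weighted holder} simultaneously, with no approximation at all (density of $D(A)$ enters only for the behavior of $S_0$ at $t=0$). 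In short: the weighted space is designed to absorb the $s^{-\beta}$ singularity coming from analytic smoothing at positive times, and that is the mechanism your proposal never invokes.
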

\begin{proof}
	\begin{enumerate}
		\item The strong continuity of the semigroup at $t=0$ guarantees the existence of $\lim_{t\rightarrow 0} t^{1-r}\|S_0(t)\|$. Suppose that $s<t$, then for arbitrary $x\in X$ and $0<\eta<1$,
		\[
		\begin{aligned}
		\| T(t)x- T(s)x\| &\le \int_{s}^{t} \|AT(\tau)\| \|x\|d\tau\\
		&= \int_{s}^{t} \|T(\tau-s)A^{1-\eta}A^{\eta}T(s)\|d\tau \|x\|\\
		&\le C_{\eta} \int_{s}^{t} (\tau-s)^{\eta-1} s^{-\eta} \|x\|\\
		&\le C_{\eta} (t-s)^{\eta} s^{-\eta}\|x\|
		\end{aligned}
		\]
		Therefore,
		\[
		\begin{aligned}
		\|S_0(t)-S_0(s)\| &\le \int_{0}^{\infty} h_{\alpha}(\theta) \|T(t^{\alpha}\theta)u_0-T(s^{\alpha}\theta)u_0\|d\theta\\
		&\le C_{\eta}\int_{0}^{\infty} h_{\alpha}(\theta)  (t^{\alpha}\theta-s^{\alpha}\theta)^{\eta} s^{-\alpha\eta}\theta^{-\eta}d\theta \|u_0\|\\
		&\le C_{\eta,\alpha}\|u_0\| (t-s)^{\alpha\eta} s^{-\alpha\eta}
		\end{aligned}
		\]
		Hence we take $\eta=\alpha^{-1}\beta$ to evaluate
		\[
		\frac{s^{1-r+\beta}\|S_0(t)-S_0(s)\|}{(t-s)^{\beta}}\le C_{\eta,\alpha} \|u_0\| s^{1-r}\rightarrow 0
		\]
		as $t\rightarrow 0$.
		
		\item The condition $u_0\in D(A^{q})$ deduces
		\[
		\begin{aligned}
		\|S_0(t)-u_0\|&= \int_{0}^{\infty} h_{\alpha}(\theta) \| (T(t^{\alpha}\theta)-I) A^{-q}\| \|A^{q} u_0\|d\theta\\
		&\le C_q \int_{0}^{\infty}h_{\alpha}(\theta)\theta^{q} t^{\alpha q}\| A^q u_0\|d\theta\\
		&\le \frac{C_q\Gamma(1+q)\|A^{q}u_0\|}{\Gamma(1+\alpha q)}t^{\beta}.
		\end{aligned}
		\]
		Therefore, we prove the H\"{o}lder continuity at $t=0$. The continuity on $(0,T]$ can be obtained by using the following estimate,
		\[
		\|T(t^{\alpha}\theta)u_0 -T(s^{\alpha}\theta)u_0\|\le \|(T(t^{\alpha}\theta - s^{\alpha}\theta)- I )A^{-q}\| \|T(s^{\alpha}\theta)\| \|A^{q}u_0\|
		\]

	\end{enumerate}
\end{proof}

%%%%%%%%%%%%%%%%%%%%%%%%%%%%%%%%%%%%%%%%%%%%%%%%%%%%%%%%%%%%%%%%%%%%%%%%%%%%%%%%%%%%%
%%%%%%%%%%%%%%%%%%%%%%%%%%%%%%%%Continuity of S_1(f)(t)%%%%%%%%%%%%%%%%%%%%%%%%%%%%%%%%%%
\begin{prop}\label{prop. time regularity of S_1}
	Let
	\[
	S_1(g)(t):=\int_{0}^{t}\int_{0}^{\infty} (t-\tau)^{\alpha-1}\theta h_{\alpha}(\theta)T((t-\tau)^{\alpha-1}\theta)g(\tau)d\tau d\theta.
	\]
	\begin{enumerate}
		\item  If $g\in {\cal F}^{r,\beta}((0,T];X)$ for $0<\beta<r\le 1$ and $\beta<\alpha$, then $S_1(g)\in {\cal F}^{r,\beta}((0,T];X)$
		\item  If $g\in C([0,T];X)$, then $S_1(g)\in C^{\beta}([0,T];X)$.
	\end{enumerate}
	
\end{prop}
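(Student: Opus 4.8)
The plan is to analyze the operator $S_1$ defined by the double convolution against the subordination kernel, reducing everything to estimates on the scalar kernel $k(t):=\alpha t^{\alpha-1}\int_0^\infty \theta h_\alpha(\theta)T(t^\alpha\theta)\,d\theta$ (note the exponent in the statement should read $(t-\tau)^{\alpha}\theta$ inside the semigroup, matching the mild-solution formula (\ref{mild sol})), which by the standard semigroup bounds (\ref{evaluate for semigroup1}) satisfies $\|k(t)\|_{B(X)}\le C t^{\alpha-1}$ and $\|A^\delta k(t)\|_{B(X)}\le C t^{\alpha-1-\alpha\delta}$ for $0<\delta<1$, together with a Hölder-type bound on differences $\|k(t)-k(s)\|_{B(X)}$ obtained exactly as in the proof of Proposition \ref{prop. time regularity of S_0}. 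These estimates are precisely the fractional analogue of the bounds one uses for the classical convolution $t\mapsto \int_0^t T(t-\tau)g(\tau)\,d\tau$, and the arguments below mirror the classical theory (cf. \cite{Yagi}, \cite{MAH}).

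For part (1), I would verify each of the three defining properties of ${\cal F}^{r,\beta}$ in turn. First, since $\|k(\sigma)\|\le C\sigma^{\alpha-1}$ and $g\in{\cal F}^{r,\beta}$ gives $\|g(\tau)\|\le \|g\|_{{\cal F}^{r,\beta}}\tau^{r-1}$, the Beta-integral $\int_0^t \sigma^{\alpha-1}(t-\sigma)^{r-1}\,d\sigma = B(\alpha,r)t^{\alpha+r-1}$ shows $\|S_1(g)(t)\|\le C t^{\alpha+r-1}$, and since $\alpha>\beta$ one has $\alpha+r-1>r-1$, so $t^{1-r}\|S_1(g)(t)\|\to 0$; in particular property (1) of Definition \ref{weighted holder} holds with limit zero. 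For the Hölder seminorm, write $S_1(g)(t)-S_1(g)(s)$ for $s<t$ as the sum of three terms in the usual way: $\int_0^s [k(t-\tau)-k(s-\tau)]g(\tau)\,d\tau$, plus $\int_s^t k(t-\tau)[g(\tau)-g(s)]\,d\tau$, plus $\big(\int_s^t k(t-\tau)\,d\tau\big)g(s)$ — the last two arising after adding and subtracting $g(s)$. Each is estimated using the kernel bounds, the Hölder control $s^{1-r+\beta}\|g(t)-g(s)\|\le w_g(t)(t-s)^\beta$ and the growth bound on $\|g\|$; the critical exponent computations (using $\beta<\alpha$, $\beta<r$, $1-\alpha<r$) produce a factor $(t-s)^\beta s^{-(1-r+\beta)}$ times a constant, and moreover a factor that tends to $0$ as $t\to 0$ because $w_g(t)\to 0$ and because of strictly positive leftover powers of $s$ or $t$. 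This simultaneously gives properties (2) and (3).

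For part (2), with $g\in C([0,T];X)$ merely continuous (hence bounded), the same three-term decomposition applies but now with $\|g\|$ bounded and no weight. The first term is controlled by $\int_0^s\|k(t-\tau)-k(s-\tau)\|\,d\tau$; splitting the integration interval at $\tau=s-(t-s)$ and using $\|k(\sigma)\|\le C\sigma^{\alpha-1}$ on the near part and the difference bound $\|k(\sigma')-k(\sigma)\|\le C(\sigma'-\sigma)^{\alpha\eta}\sigma^{\alpha-1-\alpha\eta}$ (choosing $\eta$ so that $\alpha\eta=\beta$, which is admissible since $\beta<\alpha$) on the far part yields a bound $C(t-s)^\beta$. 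The second and third terms are bounded by $\int_s^t\|k(t-\tau)\|\,d\tau\le C(t-s)^\alpha\le C(t-s)^\beta$ since $\beta<\alpha\le1$. Continuity at $t=0$ follows from $\|S_1(g)(t)\|\le Ct^\alpha\to0$. Hence $S_1(g)\in C^\beta([0,T];X)$.

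The main obstacle is the bookkeeping in the first term of the difference decomposition: one needs the singularity of $k$ near $0$ (the $\sigma^{\alpha-1}$ blow-up) to interact correctly with the Hölder-in-time modulus of $k$ while still leaving a positive power of $s$ (or of $t-s$, or a vanishing factor $w_g(t)$) so that property (3) of Definition \ref{weighted holder} holds, not merely property (2). This forces a careful choice of the auxiliary exponent $\eta=\alpha^{-1}\beta$ and a splitting of $\int_0^s$ at $\tau=s-(t-s)$ (valid when $t\le 2s$; the case $t>2s$ is easier since then $t-s$ and $s$, $t$ are comparable). Everything else is a routine Beta-function and power-counting computation of the kind already carried out in Propositions \ref{prop. time regularity of S_0} and \ref{prop. fixed point theorem for S}.
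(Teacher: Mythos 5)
Your proposal is correct, but it distributes the work differently from the paper, and the difference is worth spelling out. The paper splits the increment into $J_1$ (the piece over $[s,t]$, estimated directly against the weight $\tau^{r-1}$, with no subtraction of $g(s)$), $J_2$ (the semigroup difference, via the $[T(h)-I]A^{-\delta}A^{\delta}T$ trick) and $J_3$ (the difference of the scalar powers $(s-\tau)^{\alpha-1}-(t-\tau)^{\alpha-1}$); in $J_3$ it only uses the weak bound coming from $\tau\mapsto\tau^{\alpha-1}\in{\cal F}^{\alpha-\sigma,\sigma}$, i.e.\ a modulus $(t-s)^{\sigma}$ at the price of $(s-\tau)^{\alpha-1-2\sigma}$, which is integrable only for $\sigma<\alpha/2$, and it compensates by subtracting $g(s)$ there so that the extra factor $(s-\tau)^{\beta}$ from $g$'s weighted H\"older modulus allows the choice $\sigma=\beta$ (since $\alpha+\beta-2\sigma=\alpha-\beta>0$). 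You instead never spend $g$'s modulus on $[0,s]$: you put the $g(s)$-subtraction on $[s,t]$ (where the paper needs none) and treat $\int_0^s[k(t-\tau)-k(s-\tau)]g(\tau)\,d\tau$ purely through a kernel-difference estimate. For that term to close with exponent $\beta$ you genuinely need the sharper bound $\|k(\sigma')-k(\sigma)\|\le C(\sigma'-\sigma)^{\beta}\sigma^{\alpha-1-\beta}$ (cost $\sigma^{\alpha-1-\beta}$, not $\sigma^{\alpha-1-2\beta}$): the subordinated semigroup part of $k$ gives this exactly as in Proposition \ref{prop. time regularity of S_0} with $\delta=\beta/\alpha$, but the scalar factor requires the elementary interpolation $|\sigma'^{\alpha-1}-\sigma^{\alpha-1}|\le C(\sigma'-\sigma)^{\lambda}\sigma^{\alpha-1-\lambda}$ (interpolate the mean-value bound against the triangle inequality), which is not literally contained in Proposition \ref{prop. time regularity of S_0} and should be stated explicitly — with it the Beta integral $B(\alpha-\beta,r)$ converges because $\beta<\alpha$, no splitting is needed, and the leftover factors $s^{\alpha}$, $(t-s)^{\alpha}$, $(t-s)^{\alpha-\beta}s^{\beta}$ give property (3) of Definition \ref{weighted holder}. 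Your alternative device, splitting $\int_0^s$ at $\tau=s-(t-s)$ with the easy case $t>2s$, also repairs the weaker bound at the cost of a case analysis, so either way the argument completes; in part (2) the splitting is in fact unnecessary once $\alpha\eta=\beta<\alpha$. In short, the paper buys convergence of the critical Beta integral with $g$'s H\"older regularity, while you buy it with a sharpened (and reusable) estimate on the kernel $k$; both yield the stated conclusion, and your uniform-norm and limit computations (properties (1) and (3)) are consistent with the paper's.
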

\begin{proof}
Let $g\in {\cal F}^{r,\beta}((0,T];X)$. The simple calculation yields
\[
\begin{aligned}
&\| S_1(g)(t)-S_1(g)(s)\|\le \left\| \int_{0}^{t}(t\text{'s formula}) \pm\int_{0}^{s}(t\text{'s formula}) -\int_{0}^{s}(s\text{'s formula})\right\|\\
&\le \int_{s}^{t} \int_{0}^{\infty} \theta h_{\alpha}(\theta) (t-\tau)^{\alpha-1} \| T((t-\tau)^{\alpha}\theta)\|_{B(X)} \tau^{r-1}\tau^{1-r}\|g(\tau)\|d\tau d\theta\\
&\hspace{1cm}+\int_{0}^{s}\int_{0}^{\infty}\theta h_{\alpha}(\theta)\| \left((s-\tau)^{\alpha-1} T((s-\tau)^{\alpha}\theta)- (t-\tau)^{\alpha-1}T((t-\tau)^{\alpha}\theta)\right) g(\tau)\|d\tau d\theta\\
&\le \frac{C_1\|g\|_{{\cal F}^{r,\beta}}}{\Gamma(1+\alpha)} \int_{s}^{t} (t-\tau)^{\alpha-1} \tau^{r-1}d\tau\\
&\hspace{1cm}+\|g\|_{{\cal F}^{r,\beta}}\int_{0}^{s} \theta h_{\alpha}(\theta)(s-\tau)^{\alpha-1}\|T((t-\tau)^{\alpha}\theta)-T((s-\tau)^{\alpha}\theta)\|\tau^{r-1}d\tau d\theta\\
&\hspace{1cm}+\frac{C_1}{\Gamma(1+\alpha)} \int_{0}^{s}  \left| (s-\tau)^{\alpha-1}- (t-\tau)^{\alpha-1} \right| \|g(\tau)\|d\tau \\
&=: (J_1+J_2+J_3)
\end{aligned}
\]

The estimate for $J_1, J_2, J_3$ is as follows,

\[
\begin{aligned}
J_1 &=\frac{C_1\|g\|_{{\cal F}^{r,\beta}}}{\Gamma(1+\alpha)}\int_{s}^{t}(t-\tau)^{\alpha-1}\tau^{\beta}\tau^{r-\beta-1}d\tau\le \frac{C_1\|g\|_{{\cal F}^{r,\beta}}}{\Gamma(1+\alpha)} \int_s^t (t-\tau)^{\alpha-1}d\tau\ t^{\beta}s^{r-\beta-1}\\
s^{1-r+\beta}J_1 &\le \frac{C_1\|g\|_{{\cal F}^{r,\beta}}}{\Gamma(1+\alpha)}t^{\beta} \int_s^t(t-\tau)^{\alpha-1}d\tau=\frac{C_1\|g\|_{{\cal F}^{r,\beta}}}{\alpha\Gamma(1+\alpha)}t^{\beta} (t-s)^{\alpha}.
\end{aligned}
\]

We select arbitrary $0<\delta<1$ to gain the following estimate,
\[
\begin{aligned}
&(s-\tau)^{\alpha-1}\|T((t-\tau)^{\alpha}\theta)-T((s-\tau)^{\alpha}\theta)\|\tau^{r-1}\\
&=(s-\tau)^{\alpha-1}\tau^{r-1} \|\left[ T((t-\tau)^{\alpha}\theta-(s-\tau)^{\alpha}\theta)-I\right]A^{-\delta}A^{\delta}T((s-\tau)^{\alpha}\theta) \|\\
&\le C_{\delta} (s-\tau)^{\alpha-1}\tau^{r-1}\left[ (t-\tau)^{\alpha}\theta-(s-\tau)^{\alpha}\theta \right]^{\delta} (s-\tau)^{-\delta\alpha}\theta^{-\delta}\\
&= C_{\delta}(s-\tau)^{\alpha(1-\delta)-1}\tau^{r-1}(t-s)^{\alpha\delta}.
\end{aligned}
\]
Therefore
\[
\begin{aligned}
J_2&\le C_{\delta}\int_{0}^s (s-\tau)^{\alpha(1-\delta)-1}\tau^{r-1}(t-s)^{\alpha\delta}d\tau\\
&\le C_{\delta} B(\alpha(1-\delta),r) s^{\alpha(1-\delta) + \beta}s^{r-\beta-1} (t-s)^{\alpha\delta}\\
s^{1-r+\beta}J_2&\le C_{\delta}B(\alpha(1-\delta),r) s^{\alpha(1-\delta)+\beta} )(t-s)^{\alpha\delta}
\end{aligned}
\]

We note that $\tau\mapsto \tau^{\alpha-1}$ is in ${\cal F}^{\alpha-\sigma, \sigma}$ for all $\sigma <\alpha$. Then, there exists some $C_{\sigma}$ such that
\[
\begin{aligned}
J_3&= \frac{C_1}{\Gamma(1+\alpha)} \int_{0}^{s} \left[ (s-\tau)^{\alpha-1}- (t-\tau)^{\alpha-1} \right] \|g(\tau)- g(s)\|d\tau\\
&\hspace{2cm}+ \frac{C_1}{\Gamma(1+\alpha)} \int_{0}^{s} \left[ (s-\tau)^{\alpha-1} - (t-\tau)^{\alpha-1} \right] \|g(s)\|d\tau\\
&\le \frac{C_1C_{\sigma}\|g\|_{{\cal F}^{r,\beta}}}{\Gamma(1+\alpha)}(t-s)^{\sigma} \int_{0}^{s} (s-\tau)^{\alpha-1-2\sigma} (s-\tau)^{\beta} \tau^{r-1-\beta}d\tau\\
&\hspace{2cm} + \frac{C_1\|g\|_{{\cal F}^{r,\beta}}}{\Gamma(1+(\alpha))} s^{r-1} \int_{0}^{s} \left[ (s-\tau)^{\alpha-1} - (t-\tau)^{\alpha-1} \right]d\tau
\end{aligned}
\]
We denote the appropriate constants as $C$ to deduce
\[
\begin{aligned}
J_3&\le C (t-s)^{\sigma} B(\alpha+\beta-2\sigma, r-\beta) s^{\alpha-2\sigma+\beta} s^{r-1-\beta} + C s^{r-1-\beta}s^{\beta}(t-s)^{\alpha}\\
s^{1-r+\beta}J_3&\le C  s^{\alpha-2\sigma+\beta} (t-s)^{\sigma}+ Cs^{\beta}(t-s)^{\alpha}.
\end{aligned}
\]
Especially, we select $\sigma =\beta$.

Let $g\in C([0,T];X)$. For $s<t$, a simple calculation yields
	\[
	\begin{aligned}
	&\| S_1(g)(t)-S_1(g)(s)\|\le \left\| \int_{0}^{t}(t\text{'s formula}) \pm\int_{0}^{s}(t\text{'s formula}) -\int_{0}^{s}(s\text{'s formula})\right\|\\
	&\le \int_{s}^{t} \int_{0}^{\infty} \theta h_{\alpha}(\theta) (t-\tau)^{\alpha-1} \| T((t-\tau)^{\alpha}\theta)\|_{B(X)}\|g(\tau)\|d\tau d\theta\\
	&\hspace{8ex}+\int_{0}^{s}\int_{0}^{\infty} \theta h_{\alpha}(\theta) \| (s-\tau)^{\alpha-1} T((s-\tau)^{\alpha}\theta) -(t-\tau)^{\alpha-1} T((t-\tau)^{\alpha}\theta) \|g(\tau)\|d\tau d\theta\\
	&\le \frac{C_1\|g\|}{\Gamma(1+\alpha)} \int_{s}^{t} (t-\tau)^{\alpha-1}d\tau+ \frac{C_1\|g\|}{\Gamma(1+\alpha)}\int_{0}^{s} |(s-\tau)^{\alpha-1}-(t-\tau)^{\alpha-1}|d\tau\\
	&\hspace{8ex}+ \|g\|\int_{0}^{s}\int_{0}^{\infty}\theta h_{\alpha}(\theta) \| T((t-\tau)^{\alpha}\theta)-T((s-\tau)^{\alpha}\theta)\| (s-\tau)^{\alpha-1}d\tau d\theta\\
	&=(J_1+J_2+J_3)
	\end{aligned}
	\]
	Now, the evalutation of $J_1, J_2$ is as follows,
	\[
	\begin{aligned}
	J_1&=\frac{C_1\|g\|}{\Gamma(1+\alpha)}\int_{s}^{t} (t-\tau)^{\alpha-1}d\tau\le \frac{C_1\|g\|}{\alpha\Gamma(1+\alpha)}(t-s)^{\alpha}\\
	J_2&= \frac{C_1\|g\|}{\Gamma(1+\alpha)}\int_{0}^{s} |(s-\tau)^{\alpha-1}-(t-\tau)^{\alpha-1}| d\tau \\
	&= \frac{C_1\|g\|}{\alpha\Gamma(1+\alpha)}\left( s^{\alpha} - t^{\alpha} +(t-s)^{\alpha} \right).
	\end{aligned}
	\]
	On the other hand, for arbitrary $0<\delta<1$,
	\[
	\begin{aligned}
	&(s-\tau)^{\alpha-1}\| T((t-\tau)^{\alpha}\theta)-T((s-\tau)^{\alpha}\theta)\|\\ &=(s-\tau)^{\alpha-1}\|\left[ T((t-\tau)^{\alpha}\theta-(s-\tau)^{\alpha}\theta)-I\right]A^{-\delta}A^{\delta}T((s-\tau)^{\alpha}\theta) \|\\
	&\le C_{\delta} (s-\tau)^{\alpha-1} \left[ (t-\tau)^{\alpha}\theta-(s-\tau)^{\alpha}\theta \right]^{\delta} (s-\tau)^{-\delta\alpha}\theta^{-\delta}\\
	&\le C_{\delta,\alpha} (s-\tau)^{\alpha(1-\delta)-1}(t-s)^{\alpha\delta}
	\end{aligned}
	\]
	where $C_{\delta,\alpha}$ depends on $\delta, \alpha$. Therefore,
	\[
	\begin{aligned}
	J_3&\le \frac{C_{\delta, \alpha}}{\Gamma(1+\alpha)} \int_{0}^s (s-\tau)^{\alpha(1-\delta)-1}d\tau(t-s)^{\alpha\delta}\\
	&\le \frac{C_{\delta, \alpha} T^{\alpha(1-\delta)}}{\alpha(1-\delta)\Gamma(1+\alpha)} (t-s)^{\alpha\delta}.
	\end{aligned}
	\]
\end{proof}

\begin{proof}[Proof of Theorem \ref{theo. main theorem1}]
	Let $u$ denotes the fixed-point of $S$. For any $f\in {\cal F}^{r,\beta}$, the existence and uniquness for the equation with external force term $f- B(u)\in {\cal F}^{r,\beta}$;
	\[
	D_t^{\alpha} (w-w(0)) +Aw =f- B(u), \ w(0)=u_0
	\]
	is assured by Proposition\ \ref{existence}. This solution $w$ coincides with $u$ because the representation formula of (\ref{mild sol}) necessarily be satisfied. Proposition\ \ref{prop. time regularity of S_0},\ \ref{prop. time regularity of S_1} provides the improvement of time regularity, $u\in C^{\beta}([0,T];X)$ when $u\in D(A^{q})$ and $f\in C([0,T];X)$. Suppose that $u\in D(A)$ and $f\in {\cal F}^{1,\beta}((0,T];X)$. It is sufficient to prove $Au\in C([0,T];X)$ because of (\ref{eq. inclusion of D(Balpha)}). The continuity of 
	\[
	AS_0(t)=\int_{0}^{\infty} h_{\alpha}(\theta) T(t^{\alpha}\theta)Au_0 d\theta
	\]
	at $t=0$ is obvious. It is sufficient to prove that for any $g\in {\cal F}^{1,\beta}((0,T];X)$,
	\[
	AS_1(g)(t)=\int_{0}^{t} \int_{0}^{\infty} (t-\tau)^{\alpha-1}\theta h_{\alpha}(\theta) AT((t-\tau)^{\alpha}\theta) g(\tau)d\tau d\theta\rightarrow 0
	\]
	as $t\rightarrow 0$. Indeed,
	\[
	\begin{aligned}
	AS_1(g)(t)&= \int_{0}^{t}\int_{0}^{\infty}(t-\tau)^{\alpha-1}\theta h_{\alpha}(\theta) AT((t-\tau)^{\alpha}\theta)\left[ g(\tau)-g(t)\right]d\tau d\theta\\
	&\hspace{1cm} +  \int_{0}^{t}\int_{0}^{\infty}(t-\tau)^{\alpha-1}\theta h_{\alpha}(\theta) AT((t-\tau)^{\alpha}\theta) g(t)d\tau d\theta\\
	&= \int_{0}^{t}\int_{0}^{\infty}(t-\tau)^{\alpha-1}\theta h_{\alpha}(\theta) AT((t-\tau)^{\alpha}\theta)\left[ g(\tau)-g(t)\right]d\tau d\theta+ \left[I-\Phi(t)\right]g(t)
	\end{aligned}
	\]
	See (\ref{eq. def of Phi}) for the definition of $\Phi(t)$. On the other hand, we estimate
	\[
	\begin{aligned}
	&\int_{0}^{t}\int_{0}^{\infty}(t-\tau)^{\alpha-1}\theta h_{\alpha}(\theta) \| AT((t-\tau)^{\alpha}\theta)\| \| g(\tau)-g(t)\|d\tau d\theta\\
	&\le C \int_{0}^{t} (t-\tau)^{\beta-1} \tau^{-\beta} d\tau\ w_g(t)\rightarrow 0\ \mbox{as}\ t\rightarrow 0
	\end{aligned}
	\]
	and
	\[
	\begin{aligned}
	\|g(t)-\Phi(t)g(t)\|&\le \|I-\Phi(t)\| \cdot\|g(t)-g(0)\| + \| g(0)- \Phi(t)g(0)\|\\
	&\le C\|g(t)-g(0) + \|g(0)-\Phi(t)g(0)\|\\
	&\rightarrow0\ \mbox{as}\ t\rightarrow 0
	\end{aligned}
	\]
	because of the strong continuity of $\Phi(t)$ at $t=0$.
\end{proof}

\begin{rem}\label{rem for non local perturb}
	Our results shall be extended to the non-local perturbation
	\[
	B:C([0,T];X)\rightarrow  C([0,T];X)
	\]
	such that
	\[
	\| B(u)-B(v) \|_{C([0,T];X)}\le L \| u-v \|_{C([0,T];X)}
	\]
	for some $L>0$ and maps the (weighted) H\"{o}lder space to itself.
\end{rem}

%%%%%%%%%%%%%%%%%%%%%%%%%%%%%%%%%%%%%%%%%%%%%%%%%%%%%%%%%%%%%%%%%%%%%%%%%%%%%%%%%%%%%%%%%
%%%%%%%%%%%%%%%%%%%%%%%%%%%%%%%%%%%%%%%Time Differentiability%%%%%%%%%%%%%%%%%%%%%%%%%%%%
%%%%%%%%%%%%%%%%%%%%%%%%%%%%%%%%%%%%%%%%%%%%%%%%%%%%%%%%%%%%%%%%%%%%%%%%%%%%%%%%%%%%%%%%%
\section{Proof of Theorem \ref{theo. main theorem2}}\label{section. main theorem2 time diff}

In this section, we will prove Theorem\ \ref{theo. main theorem2}. If we differentiate both sides of (\ref{eq. semilinear problem}) formally by $t$, we should be able to derive the equation which $u'$ should satisfy. For preparation, we at first investigate evolution equations with integral initial conditions.

\subsection{Integral initial condition equation}
We investigate the following equation
\begin{equation}\label{initial cond with integral}
\left\{
\begin{aligned}
D_t^{\alpha} u +A u&=f,\ &&\mbox{in}\ (0,T],\\
(I^{\alpha-1}u)(0)&=x_0
\end{aligned}
\right.
\end{equation}
where
\begin{equation}\label{RL integral}
I^{\alpha-1}u(t)=\frac{1}{\Gamma(1-\alpha)}\int_{0}^{t}(t-\tau)^{-\alpha}u(\tau)d\tau.
\end{equation}
We assume that $f\in {\cal F}^{r,\beta}((0,T];X)$ for some $0<\beta<r\le 1$. The representation formula for mild solution shall be changed from (\ref{mild sol}).

\begin{prop}
	The solution of (\ref{initial cond with integral}) is given by the following;
	\begin{equation}\label{mild sol of integral problem}
	u(t)=\alpha\int_0^{\infty} h_{\alpha}(\theta)T(t^{\alpha}\theta)x_0 d\theta t^{\alpha-1} + \alpha \int_{0}^{t} \int_{0}^{\infty}  \theta h_{\alpha}(\theta)(t-\tau)^{\alpha-1}T((t-\tau)^{\alpha}\theta)f(\tau)d\tau d\theta
	\end{equation}
\end{prop}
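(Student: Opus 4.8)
The plan is to derive (\ref{mild sol of integral problem}) via the Laplace transform, following the argument of \cite{Mahmoud}, \cite{perturbation} that yields (\ref{mild sol}); the only new point is the bookkeeping of the initial value, which now lives in $I^{\alpha-1}u$ rather than in $u$. Since $D_t^\alpha u=\frac{d}{dt}(I^{\alpha-1}u)$ by (\ref{RL integral}), and the Riemann--Liouville integral of order $1-\alpha$ has Laplace symbol $\lambda^{\alpha-1}$, the rule for the transform of a derivative gives
\[
\widehat{D_t^\alpha u}(\lambda)=\lambda\,\widehat{I^{\alpha-1}u}(\lambda)-(I^{\alpha-1}u)(0)=\lambda^\alpha\hat u(\lambda)-x_0 .
\]
Transforming (\ref{initial cond with integral}) therefore yields the resolvent equation $(\lambda^\alpha+A)\hat u(\lambda)=x_0+\hat f(\lambda)$, whence
\[
\hat u(\lambda)=(\lambda^\alpha+A)^{-1}x_0+(\lambda^\alpha+A)^{-1}\hat f(\lambda),
\]
the resolvent being bounded and equal to $\int_0^\infty e^{-\lambda^\alpha s}T(s)\,ds$ for $\lambda$ in a suitable right half-plane by (\ref{sectrial1})--(\ref{sectrial2}).

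The second step is the inversion, for which the single ingredient needed is the subordination identity
\[
\int_0^\infty e^{-\lambda t}\Big(\alpha t^{\alpha-1}\!\int_0^\infty\!\theta\,h_\alpha(\theta)\,T(t^\alpha\theta)x\,d\theta\Big)\,dt=(\lambda^\alpha+A)^{-1}x ,
\]
which is exactly the computation already underlying the forcing term of (\ref{mild sol}); it follows from the moment relation $\int_0^\infty\theta^\gamma h_\alpha(\theta)\,d\theta=\Gamma(1+\gamma)/\Gamma(1+\alpha\gamma)$, Fubini's theorem, and the scalar formula $\int_0^\infty e^{-\lambda t}t^{\alpha-1}E_{\alpha,\alpha}(-\mu t^\alpha)\,dt=(\lambda^\alpha+\mu)^{-1}$ for the two-parameter Mittag--Leffler function. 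Applying it to $x=x_0$, and applying it under the convolution in $f$ together with the convolution theorem, and then invoking uniqueness of the Laplace transform for $X$-valued functions of at most exponential growth, recovers (\ref{mild sol of integral problem}). The first summand there takes the place of the term $\int_0^\infty h_\alpha(\theta)T(t^\alpha\theta)u_0\,d\theta$ of (\ref{mild sol}), whose transform is $\lambda^{\alpha-1}(\lambda^\alpha+A)^{-1}u_0$; the absence of the factor $\lambda^{\alpha-1}$ (equivalently, an extra $\lambda^{1-\alpha}$) is precisely what turns the bounded initial layer into the singular one $\sim t^{\alpha-1}$, consistent with the datum being prescribed for $I^{\alpha-1}u$ rather than for $u(0)$.

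To make the argument rigorous without a priori justifying the admissibility of these transforms, I would instead present the result as a direct verification that the right-hand side of (\ref{mild sol of integral problem}) solves (\ref{initial cond with integral}). The forcing part is handled exactly as in Proposition \ref{existence} and in the verification that (\ref{mild sol}) solves (\ref{eq. linear cauchy problem}); for the first summand $v$, the semigroup bounds (\ref{evaluate for semigroup1})--(\ref{evaluate for semigroup2}) give $v\in C((0,T];D(A))$, and the same manipulations with $h_\alpha$ and $T(\cdot)$ that verify (\ref{mild sol}) give $D_t^\alpha v+Av=0$ on $(0,T]$. The step I expect to be the main obstacle is the integral initial condition $(I^{\alpha-1}v)(0)=x_0$: one writes $I^{\alpha-1}v$ as an iterated integral in $\theta$ and $\tau$, interchanges the order of integration, evaluates the inner $\tau$-integral as a Beta integral so that the $t^{\alpha-1}$ prefactor is exactly absorbed, and then lets $t\to 0$, using strong continuity of $T$ at $0$ and $\int_0^\infty h_\alpha(\theta)\,d\theta=1$ to identify the limit as $x_0$. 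In short, the only genuinely new task relative to the derivation of (\ref{mild sol}) is checking that $I^{\alpha-1}$ precisely cancels the singular layer $t^{\alpha-1}$ carried by $v$; the rest parallels \cite{Mahmoud}, \cite{perturbation} and the already-established facts about $h_\alpha$ and $T$.
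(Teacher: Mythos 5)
Your proposal is correct and follows essentially the same route as the paper: a formal Laplace-transform derivation as in \cite{perturbation} (the transform rule ${\cal L}[D_t^{\alpha}u](\lambda)=\lambda^{\alpha}\hat u(\lambda)-(I^{\alpha-1}u)(0)$ and the resolvent $(\lambda^{\alpha}+A)^{-1}$), followed by a direct verification that the singular first summand solves the homogeneous equation and that $I^{\alpha-1}$ applied to it tends to $x_0$, via the interchange of integrals, the Beta integral absorbing $t^{\alpha-1}$, and dominated convergence. The only bookkeeping to fix in your last step: the first summand carries the weight $\theta h_{\alpha}(\theta)$ (as in your subordination identity and in the paper's map $\tilde S$; the displayed proposition omits the $\theta$), so the limit is identified as $x_0$ using the first moment $\int_0^{\infty}\theta h_{\alpha}(\theta)\,d\theta=1/\Gamma(1+\alpha)$ against $\frac{\alpha}{\Gamma(1-\alpha)}B(1-\alpha,\alpha)=\Gamma(1+\alpha)$, not $\int_0^{\infty}h_{\alpha}(\theta)\,d\theta=1$, and the strong-continuity argument at $t=0$ requires $x_0\in\overline{D(A)}$ since $D(A)$ is not assumed dense.
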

\begin{proof}
	The formula (\ref{mild sol of integral problem}) can be deduced from (\ref{RL integral}) by the same way as in \cite{perturbation}. In fact, we apply the Laplace transform to (\ref{RL integral}) and use the fact (see, for instance \cite{Podlubny}),
	\[
	{\cal L}\left[ D_t^{\alpha} g \right](s)=s^{\alpha}{\cal L}[g](s) -I^{\alpha-1}g(0)
	\]
	where ${\cal L}$ is Laplace transform
	\[
	{\cal L}[g](s):=\int_{0}^{\infty} e^{-st}g(t)dt.
	\]
	Therefore, we denote $U={\cal L}[u], F={\cal L}[f]$, then,
	\[
	s^{\alpha}U(s)-x_0 +AU(s)=F(s)\ \Rightarrow\ U(s)= -(-s^{\alpha}-A)^{-1}(x_0+F(s)).
	\]
	We use the fact that for each $\lambda\le 0$,
	\[
	(\lambda-A)^{-1}=-\int_{0}^{\infty}e^{\lambda t} T(t)dt,
	\]
	Then,
	\[
	U(s)=\int_{0}^{\infty}e^{s^{\alpha}t}(x_0+F(s))dt.
	\]
	The rest of the calculations are the same as in \cite{perturbation}. The second term of (\ref{mild sol of integral problem}) is obviously the solution for 
	\[
	D_t^{\alpha}w +A w =f,\ I^{\alpha-1}(w)(0)=0.
	\]
	We denote the first term of (\ref{mild sol of integral problem}) by $w_0(t)$ and let
	\begin{equation}\label{eq. def of Phi}
		\Phi(t)= \int_{0}^{\infty} h_{\alpha}(\theta)T(t^{\alpha}\theta) d\theta.
	\end{equation}
	We observe 
	\[
	\begin{aligned}
	w_0(t) &= \int_{0}^{\infty} h(\theta)A^{-1}(\alpha t^{\alpha-1}\theta)A T(t^{\alpha}\theta)x_0d\theta\\
	&= -\int_{0}^{\infty} h(\theta) A^{-1} \frac{d}{dt}\left[ T(t^{\alpha}\theta)x_0 \right]d\theta\\
	&=-A^{-1}\frac{d}{dt}\Phi(t)x_0,
	\end{aligned}
	\]
	that is,
	\[
	Aw_0(t)=-\Phi'(t)x_0.
	\]
	On the other hands, (\ref{mild sol}) means that $\Phi(t)x_0$ is the solution for
	\[
	\frac{1}{\Gamma(1-\alpha)}\int_{0}^{t}(t-\tau)^{-\alpha}\Phi'(t)x_0d\tau= -A\Phi(t)x_0.
	\]
	Therefore,
	\[
	\begin{aligned}
	\frac{1}{\Gamma(1-\alpha)}\int_0^{t}(t-\tau)^{-\alpha}w_0(\tau)d\tau &= -A^{-1}\frac{1}{\Gamma(1-\alpha)}\int_{0}^{t} (t-\tau)^{-\alpha}\Phi'(t)x_0d\tau\\
	&= \Phi(t)x_0
	\end{aligned}
	\]
	which means,
	\[
	\frac{1}{\Gamma(1-\alpha)}\frac{d}{dt}\int_{0}^{t}(t-\tau)^{-\alpha}w_0(\tau)=\Phi'(t)x_0=-Aw_0(t).
	\]
	If $x_0\in \overline{D(A)}$,  the Lebesgue convergence theorem yields when $t\rightarrow 0$,
	\[
	\begin{aligned}
	\frac{\alpha}{\Gamma(1-\alpha)}\int_{0}^{t}(t-\tau)^{-\alpha}w_0(\tau)d\tau&=\frac{\alpha}{\Gamma(1-\alpha)}\int_{0}^{t}(t-\tau)^{-\alpha}\tau^{\alpha-1}\int_{0}^{\infty} \theta h(\theta)T(\tau^{\alpha}\theta)x_0d\tau d\theta\\
	&= \frac{\alpha}{\Gamma(1-\alpha)}\int_{0}^{1}(1-\tau)^{-\alpha}\tau^{\alpha-1}\int_{0}^{\infty} \theta h(\theta) T(\tau^{\alpha}\theta t^{\alpha})x_0 d\tau d\theta\\
	&\rightarrow \frac{\alpha}{\Gamma(1-\alpha)}\int_{0}^{1}(1-\tau)^{-\alpha}\tau^{\alpha-1} \int_{0}^{\infty}  \theta h(\theta ) x_0 d\tau d\theta\\
	&= \frac{\alpha}{\Gamma(1-\alpha)}B(1-\alpha,\alpha) \frac{\Gamma(2)}{\Gamma(1+\alpha)}x_0\\
	&=x_0
	\end{aligned}
	\]
\end{proof}
Next we investigate the following Lipschitz perturbation problem
\begin{equation}\label{perturbation with integral condition}
\left\{
\begin{aligned}
D_t^{\alpha} u +A u+B(u)&=f,\ &&\mbox{in}\ (0,T],\\
(I^{\alpha}u)(0)&=x_0.
\end{aligned}
\right.
\end{equation}

Proposition\ \ref{prop. time regularity of S_0} and the fact that $t\mapsto t^{\alpha-1}$ is in ${\cal F}^{\alpha-\gamma,\gamma}$ deduce the next lemma.

\begin{lem}\label{lem. continuity of w0}
	Suppose that $x_0\in \overline{D(A)}$. Then, $w_0\in {\cal F}^{\alpha-\gamma,\gamma}$ for any $0<\gamma <\alpha/2$.
\end{lem}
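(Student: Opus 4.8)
The plan is to exploit the factorization $w_0(t)=\alpha\,t^{\alpha-1}S_0(t)$, where $S_0(t)=\Phi(t)x_0$ with $\Phi$ as in (\ref{eq. def of Phi}), and to transfer the regularity of the two factors to their product. First I would record the two inputs. Since $\|\Phi(t)\|_{B(X)}\le C_1$ for every $t\ge0$ (because $h_\alpha$ is a probability density and $\|T(s)\|_{B(X)}\le C_1$) and $\Phi(\cdot)x_0$ is strongly continuous on $[0,T]$ for $x_0\in\overline{D(A)}$, the curve $S_0$ is continuous with $\|S_0(t)\|\le C_1\|x_0\|$; moreover, running the estimate in the proof of Proposition \ref{prop. time regularity of S_0}(1) with $\eta=\gamma/\alpha$ — admissible since $\gamma<\alpha/2<\alpha$ forces $\eta<1$ — yields
\[
\|S_0(t)-S_0(s)\|\le C_{\gamma,\alpha}\|x_0\|\,(t-s)^{\gamma}s^{-\gamma},\qquad 0\le s<t\le T.
\]
Second, as already used for the term $J_3$ in the proof of Proposition \ref{prop. time regularity of S_1}, the scalar function $t\mapsto t^{\alpha-1}$ lies in ${\cal F}^{\alpha-\gamma,\gamma}((0,T];X)$; note that ${\cal F}^{\alpha-\gamma,\gamma}$ is even meaningful only because $\gamma<\alpha-\gamma$, i.e.\ $\gamma<\alpha/2$.

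With these at hand I would verify the three conditions of Definition \ref{weighted holder} for $w_0$ with $r=\alpha-\gamma$ and $\beta=\gamma$, so that the relevant weights are $t^{1-r}=t^{1-\alpha+\gamma}$ and $s^{1-r+\beta}=s^{1-\alpha+2\gamma}$. Condition (1) is immediate: $t^{1-\alpha+\gamma}\|w_0(t)\|=\alpha t^{\gamma}\|S_0(t)\|\le\alpha C_1\|x_0\|t^{\gamma}\to0$. For (2) and (3) I would split
\[
w_0(t)-w_0(s)=\alpha\,t^{\alpha-1}\bigl(S_0(t)-S_0(s)\bigr)+\alpha\,\bigl(t^{\alpha-1}-s^{\alpha-1}\bigr)S_0(s).
\]
Pairing the first term with the weight $s^{1-\alpha+2\gamma}(t-s)^{-\gamma}$ and inserting the Hölder estimate for $S_0$ gives the bound $\alpha C_{\gamma,\alpha}\|x_0\|\,t^{\alpha-1}s^{1-\alpha+\gamma}$; since $\alpha-1<0$ and $s\le t$ we have $t^{\alpha-1}\le s^{\alpha-1}$, so this is at most $\alpha C_{\gamma,\alpha}\|x_0\|\,s^{\gamma}\le\alpha C_{\gamma,\alpha}\|x_0\|\,t^{\gamma}$, which is bounded and tends to $0$ as $t\to0$. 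For the second term, $\|S_0(s)\|\le C_1\|x_0\|$ reduces it to $\alpha C_1\|x_0\|$ times $s^{1-(\alpha-\gamma)+\gamma}|t^{\alpha-1}-s^{\alpha-1}|(t-s)^{-\gamma}$, which is dominated by the (finite) ${\cal F}^{\alpha-\gamma,\gamma}$-seminorm of $t\mapsto t^{\alpha-1}$ and whose supremum over $s<t$, namely $w_{t^{\alpha-1}}(t)$, tends to $0$ as $t\to0$. Summing, the weighted Hölder seminorm of $w_0$ is finite (condition (2)) and $w_{w_0}(t)\le\alpha C_{\gamma,\alpha}\|x_0\|t^{\gamma}+\alpha C_1\|x_0\|w_{t^{\alpha-1}}(t)\to0$ (condition (3)), hence $w_0\in{\cal F}^{\alpha-\gamma,\gamma}$.

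I expect the only genuinely delicate point to be the exponent bookkeeping in the first term: $t^{\alpha-1}$ is more singular at the origin than a generic element of ${\cal F}^{\alpha-\gamma,\gamma}$, and what rescues the estimate is that, once it is paired with the singular factor $s^{-\gamma}$ coming from $S_0$, the surplus weight $s^{1-\alpha+2\gamma}$ supplied by the target space absorbs it using only $s\le t$ and $\alpha<1$. Apart from that, the argument needs no analytic ingredient beyond Proposition \ref{prop. time regularity of S_0} and the membership $t^{\alpha-1}\in{\cal F}^{\alpha-\gamma,\gamma}$; in particular the hypothesis $\gamma<\alpha/2$ enters exactly twice — to make the target space nondegenerate and to keep $\eta=\gamma/\alpha$ strictly below $1$ when quoting Proposition \ref{prop. time regularity of S_0}.
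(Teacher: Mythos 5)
Your argument is correct and is exactly the route the paper intends: the paper's one-line justification of Lemma \ref{lem. continuity of w0} invokes Proposition \ref{prop. time regularity of S_0} together with $t\mapsto t^{\alpha-1}\in{\cal F}^{\alpha-\gamma,\gamma}$, and your write-up simply fills in the product decomposition and the exponent bookkeeping (choosing $\eta=\gamma/\alpha$ in the estimate from the proof of Proposition \ref{prop. time regularity of S_0}) that the paper leaves implicit. The only cosmetic discrepancy is whether the first term of (\ref{mild sol of integral problem}) carries the extra weight $\theta$ (as in (\ref{eq. contraction map for integral initial cond})); since $\int_0^\infty \theta h_\alpha(\theta)\,d\theta<\infty$, this does not affect any of your estimates.
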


We define the map on ${\cal F}^{\alpha-\gamma,\gamma}$
\begin{equation}\label{eq. contraction map for integral initial cond}
	\begin{aligned}
	\tilde{S}:{\cal F}^{\alpha-\gamma, \gamma}\ni v\mapsto \tilde{S}(v)(t)&=\alpha\int_0^{\infty}\theta h_{\alpha}(\theta)T(t^{\alpha}\theta)x_0 d\theta t^{\alpha-1} \\
	&+\alpha \int_{0}^{t} \int_{0}^{\infty}  \theta h_{\alpha}(\theta)(t-\tau)^{\alpha-1}T((t-\tau)^{\alpha}\theta)(f(\tau)-B(v(\tau)))d\tau d\theta.
	\end{aligned}
\end{equation}

\begin{theo}\label{theo. perturbation with integral condition theorem}
	Let $f\in {\cal F}^{r,\beta}((0,T];X)$ such that $0<\beta<r\le 1$, $\beta<\alpha$, $0<\max(\beta,1-\alpha)<r$ and $\alpha/2\le r$. Then, (\ref{perturbation with integral condition}) has the unique solution
	\[
	u\in {\cal F}^{\alpha-\gamma, \gamma}((0,T];X)\cap C((0,T];D(A)).
	\]
	where $0<\gamma<\alpha/2$, $\alpha-\gamma \le r$ and $\gamma\le \beta$.
\end{theo}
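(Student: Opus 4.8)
The plan is to prove Theorem~\ref{theo. perturbation with integral condition theorem} by a fixed-point argument for the map $\tilde S$ of \eqref{eq. contraction map for integral initial cond} on the complete metric space ${\cal F}^{\alpha-\gamma,\gamma}((0,T];X)$, equipped with a weighted norm of the type used in Proposition~\ref{prop. fixed point theorem for S}. First I would check that $\tilde S$ maps ${\cal F}^{\alpha-\gamma,\gamma}$ into itself: by Lemma~\ref{lem. continuity of w0} the first (linear) term $w_0$ already lies in ${\cal F}^{\alpha-\gamma,\gamma}$ since $x_0\in\overline{D(A)}$; for the second term, one observes that $f-B(v)\in{\cal F}^{r,\beta}$ whenever $v\in{\cal F}^{\alpha-\gamma,\gamma}$ (the Lipschitz bound \eqref{eq. lipschitz condition}, or rather the uniform bound on $B$, keeps $B(v)$ bounded and the composition inherits the modulus of continuity), and then Proposition~\ref{prop. time regularity of S_1}(1) gives $S_1(f-B(v))\in{\cal F}^{r,\beta}$; finally the embedding ${\cal F}^{r,\beta}\subset{\cal F}^{\alpha-\gamma,\gamma}$ holds because of the index constraints $\alpha-\gamma\le r$ and $\gamma\le\beta$ (together with $\gamma<\alpha/2\le r$ so the indices are admissible). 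Here I am using $\alpha/2\le r$ precisely to guarantee $\alpha-\gamma\le r$ is compatible with $\gamma<\alpha/2$.

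Next I would establish the contraction estimate. For $v_1,v_2\in{\cal F}^{\alpha-\gamma,\gamma}$ the difference $\tilde S(v_1)-\tilde S(v_2)$ equals $\alpha S_1(B(v_2)-B(v_1))$, and using $\|B(v_1(\tau))-B(v_2(\tau))\|\le L\|v_1(\tau)-v_2(\tau)\|$ one bounds this in the weighted sup-norm $\|\cdot\|_\mu$ by an integral $\int_0^T e^{-\mu\tau}\tau^{\alpha-1}d\tau$ times $L C_1\alpha/\Gamma(1+\alpha)$ times $\|v_1-v_2\|_\mu$, exactly as in the proof of Proposition~\ref{prop. fixed point theorem for S}; choosing $\mu$ large makes this a strict contraction on the $X$-valued continuous-function part. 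The slightly delicate point is that the fixed point must be found in ${\cal F}^{\alpha-\gamma,\gamma}$, not merely in $C((0,T];X)$: the cleanest route is to run the Banach fixed-point theorem on the closed subset of ${\cal F}^{\alpha-\gamma,\gamma}$ consisting of curves whose ${\cal F}^{\alpha-\gamma,\gamma}$-norm is bounded by a radius $R$ depending on $\|w_0\|_{{\cal F}^{\alpha-\gamma,\gamma}}$, $\|f\|_{{\cal F}^{r,\beta}}$, $L$ and the operator norm of $S_1$ — this set is invariant under $\tilde S$ by the self-mapping estimate above, and $\tilde S$ is a contraction there in the weaker $\|\cdot\|_\mu$ metric; since ${\cal F}^{\alpha-\gamma,\gamma}$-bounded $\|\cdot\|_\mu$-Cauchy sequences have limits in ${\cal F}^{\alpha-\gamma,\gamma}$ (lower semicontinuity of the H\"older seminorm), the fixed point lies in the desired space.

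Then I would upgrade the fixed point $u$ to a genuine solution. Given $u\in{\cal F}^{\alpha-\gamma,\gamma}$, the curve $f-B(u)$ belongs to ${\cal F}^{r,\beta}$, so by the representation formula \eqref{mild sol of integral problem} together with the preceding Proposition (the one establishing \eqref{mild sol of integral problem} as the solution of \eqref{initial cond with integral}) the function $u=\tilde S(u)$ solves
\[
D_t^\alpha u + Au = f - B(u)\ \text{in }(0,T],\qquad (I^{\alpha-1}u)(0)=x_0,
\]
i.e.\ \eqref{perturbation with integral condition}; the regularity $u\in C((0,T];D(A))$ comes from applying $A$ under the integral in the $S_1$-term (as in the $AS_1(g)$ computation in the proof of Theorem~\ref{theo. main theorem1}, valid away from $t=0$) and from $Aw_0(t)=-\Phi'(t)x_0$ being continuous on $(0,T]$. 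Uniqueness is immediate: any two solutions yield two fixed points of $\tilde S$ in $C((0,T];X)$ with finite $\|\cdot\|_\mu$, which the contraction forbids.

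The main obstacle I anticipate is the bookkeeping of the H\"older indices — verifying that $f-B(v)$ genuinely sits in ${\cal F}^{r,\beta}$ with a modulus controlled by that of $v$ (one must transfer the weight $s^{1-(\alpha-\gamma)+\gamma}$ on $v$ to the weight $s^{1-r+\beta}$ needed for ${\cal F}^{r,\beta}$, which is where the inequalities $\alpha-\gamma\le r$, $\gamma\le\beta$, and $\alpha/2\le r$ are all consumed), and checking that the singular first term $w_0(t)\sim t^{\alpha-1}$ does not obstruct the invariant-ball argument. Everything else is a routine repetition of the estimates already carried out for $S_0$, $S_1$ in Section~\ref{section. main theorem1}.
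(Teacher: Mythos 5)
Your outline breaks down at the contraction step, and the fix is exactly the technical heart of the paper's proof that your plan skips. You propose to contract in the norm $\|u\|_{\mu}=\sup_t e^{-\mu t}\|u(t)\|$ ``exactly as in Proposition~\ref{prop. fixed point theorem for S}''. But elements of ${\cal F}^{\alpha-\gamma,\gamma}$ with $\alpha-\gamma<1$ are in general unbounded at $t=0$ (the inhomogeneous term $w_0(t)\sim t^{\alpha-1}$ already is, and so are the iterates and their differences, since $\|v_1(t)-v_2(t)\|$ is only controlled after multiplication by $t^{1-(\alpha-\gamma)}$). Hence $\|v_1-v_2\|_{\mu}$ need not be finite and the Proposition~\ref{prop. fixed point theorem for S} computation does not transfer. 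The paper instead works with the weighted norm $|\|f\||_{\mu}=\sup_t e^{-\mu t}t^{1-\kappa}\|f(t)\|+\sup_{s<t}s^{1-\kappa+\gamma}e^{-\mu t}\|f(t)-f(s)\|(t-s)^{-\gamma}$, $\kappa=\alpha-\gamma$; the price is that the sup-norm estimate now involves the kernel $(t-\tau)^{\alpha-1}\tau^{\kappa-1}$, and smallness in $\mu$ has to be extracted by H\"older's inequality with $p=1+\epsilon$ and a Beta-function computation, and, more importantly, the weighted H\"older \emph{seminorm} of $\tilde S v_1-\tilde S v_2$ must itself be estimated (the $K_1,K_2,K_3$ bounds), which is most of the paper's proof.

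Your attempted substitute for that seminorm work --- run the contraction only in $\|\cdot\|_{\mu}$ on a ball of radius $R$ in the unweighted ${\cal F}^{\alpha-\gamma,\gamma}$ norm and use lower semicontinuity --- does not close either, because invariance of such a ball requires an inequality of the form $\|w_0\|_{{\cal F}}+C_{S_1}\bigl(\|f\|+L R\bigr)\le R$, i.e.\ $C_{S_1}L<1$, where $C_{S_1}$ is the (not small) operator norm of $S_1$ on $[0,T]$. That fails for large $L$ or $T$, which is precisely the regime the theorem claims; the exponential weight $e^{-\mu t}$ must act on the full ${\cal F}^{\kappa,\gamma}$ norm, not just the sup part, to beat $L$. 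Two further slips feed into this: $B(v)$ for $v\in{\cal F}^{\alpha-\gamma,\gamma}$ only inherits the ${\cal F}^{\alpha-\gamma,\gamma}$ modulus (it is \emph{not} in ${\cal F}^{r,\beta}$, since $\gamma\le\beta$ goes the wrong way), so Proposition~\ref{prop. time regularity of S_1} must be applied with the indices $(\alpha-\gamma,\gamma)$ as the paper does; and the embedding ${\cal F}^{r,\beta}\subset{\cal F}^{\alpha-\gamma,\gamma}$ you invoke needs the weight comparison $s^{(r-\beta)-(\kappa-\gamma)}(t-s)^{\beta-\gamma}$, i.e.\ $r-\beta\ge\alpha-2\gamma$, which is not implied by $\alpha-\gamma\le r$, $\gamma\le\beta$ alone and should be checked rather than asserted. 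The final bootstrap to a genuine solution and to $C((0,T];D(A))$, and the uniqueness argument, are fine.
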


\begin{proof}
We just need to show that (\ref{eq. contraction map for integral initial cond}) is contraction mapping on ${\cal F}^{\alpha-\gamma,\gamma}$. The condition for $\gamma$ and Proposition\ \ref{prop. time regularity of S_1} guarantee that $\tilde{S}$ maps ${\cal F}^{\alpha-\gamma,\gamma}$ to itself. Now, for each $v_1,v_2\in {\cal F}^{\alpha-\gamma, \gamma}$, we denote $\tilde{S}(v_1)=u_1, \tilde{S}(v_2)=u_2$. The difficulty of the proof is that we have to evaluate the seminorm part of $\|\cdot\|_{{\cal F}^{r,\beta}}$, not just uniform norm. For simplicity, we denote $\alpha-\gamma=\kappa$, also, we use $\lesssim$ as a evaluation by appropriate constant multiplication to avoid a complicated notation. We equip the norm
\[
|\|f\||_{\mu}=\sup_{0\le t\le T}e^{-\mu t}t^{1-\kappa}\| f(t)\| +\sup_{0\le s<t\le T} \frac{s^{1-\kappa+\gamma}e^{-\mu t}\| f(t)-f(s) \|}{(t-s)^{\gamma}}.
\]
for ${\cal F}^{\kappa,\gamma}((0,T];X)$ where $\mu>0$.

\vspace{2ex}\hspace{-3ex}{\large{\bf Estimate for uniform norm}}\\
	
We can easily see that
	\[
	\begin{aligned}
	\| u_1(t)-u_2(t)\|&\lesssim \int_{0}^{t}  e^{\mu t} e^{-\mu(t-\tau)}e^{-\mu \tau} (t-\tau)^{\alpha-1}\tau ^{\kappa-1}\tau^{1-\kappa}\| v_1(\tau)-v_2(\tau)\|d\tau\\
	e^{-\mu t}\| u_1(t)-u_2(t)\|&\lesssim \int_{0}^{t}  e^{-\mu(t-\tau)} (t-\tau)^{\alpha-1}\tau ^{\kappa-1}d\tau |\| v_1-v_2\||_{\mu}.
	\end{aligned}
	\]
	For sufficiently small $\epsilon>0$, Let $p=1+\epsilon$ and $q$ be the conjugate of $p$. Because of the H\"{o}lder's inequality
	\[
	\begin{aligned}
	e^{-\mu t}\| u_1(t)-u_2(t)\|&\lesssim \left( \int_{0}^{t}e^{-\mu q(t-\tau)} d\tau\right)^{1/q}\left( \int_{0}^{t} (t-\tau)^{p(\alpha-1)}\tau ^{p(\kappa-1)}d\tau \right)^{1/p}\cdot L|\| v_1-v_2\||_{\mu}\\
	&= B_{\mu} \underline{\left( \int_{0}^{t} (t-\tau)^{p(\alpha-1)}\tau ^{p(\kappa-1)}d\tau \right)^{1/p}}\cdot |\| v_1-v_2\||_{\mu}
	\end{aligned}
	\]
	where 
	\[
	\begin{aligned}
	\left(\int_{0}^{t}e^{-\mu q (t-\tau)}d\tau\right)^{1/q}&= \left(\int_{0}^{t}e^{-\mu q \tau}d\tau\right)^{1/q}\\
	&\le \left(\int_{0}^{T}e^{-\mu q \tau}d\tau\right)^{1/q}= B_{\mu}\rightarrow 0,\ \mu\rightarrow \infty
	\end{aligned}
	\]
	The underlined part shall be estimated as follows;
	\[
	\begin{aligned}
	\text{(underline part)}^p & = \int_{0}^t  t^{p(\alpha-1)} \left( 1-\frac{\tau}{t} \right)^{p(\alpha-1)} t^{p(\kappa-1)}\left( \frac{\tau}{t} \right)^{p(\kappa-1)}d\tau\\
	&= t^{p(\alpha+\kappa-2)} \int_{0}^{t}  \left( 1-\frac{\tau}{t} \right)^{p(\alpha-1)}\left( \frac{\tau}{t} \right)^{p(\kappa-1)}d\tau,\ \ (\tau\leftrightarrow\tau/t,\ d\tau\leftrightarrow td\tau)\\
	&=  t^{p(\alpha+\kappa-2)+1}\in
	t_{0}^1  (1-\tau)^{p(\alpha-1)} \tau ^{p(\kappa-1)}d\tau\\
	&= t^{p(\alpha+\kappa-2)+1} B(p(\alpha-1)+1, p(\kappa-1)+1)\\
	\text{(underline part)}& = B(p(\alpha-1)+1, p(\kappa-1)+1)^{1/p}\ t^{\alpha-1+1/p}\ t^{\kappa-1}.
	\end{aligned}
	\]
	That is,
	\[
	e^{-\mu t}t^{1-\kappa}\| u_1(t)-u_2(t)\|\lesssim B_{\mu}t^{\alpha-1+1/p} |\|v_1-v_2\||_{\mu}
	\]
	We can choose the $\epsilon>0$ such that the indices meet
	\[
	\left\{
	\begin{aligned}
	&\alpha-1+\frac{1}{p}=\alpha-\frac{\epsilon}{1+\epsilon}>0,\\
	&p(\alpha-1)+1 =(1+\epsilon)\alpha -\epsilon>0,\\
	&p(\kappa-1)+1=(1+\epsilon)\kappa-\epsilon>0.
	\end{aligned}
	\right.
	\]
	Hence, we conclude
	\[
	\sup_{0\le t\le T}e^{-\mu t}t^{1-\kappa}\| u_1(t)-u_2(t)\|\le B_{\mu}C(L,T) |\|v_1-v_2\||_{\mu}
	\]
	
\vspace{2ex}\hspace{-3ex}{\large{\bf Estimate for seminorm}}\\

	We set $V= B(v_1)-B(v_2)$ and $U=\tilde{S}v_1-\tilde{S}v_2$. We shall estimate $\| U(t)-U(s)\|$. The simple calculation yields
	\[
	\begin{aligned}
	&\| U(t)-U(s)\|\\
	&\le \left\| \int_{0}^{t}(t\text{'s formula}) \pm\int_{0}^{s}(t\text{'s formula}) -\int_{0}^{s}(s\text{'s formula})\right\|\\
	&\le \int_{s}^{t} \int_{0}^{\infty} \theta h_{\alpha}(\theta) (t-\tau)^{\alpha-1} \| T((t-\tau)^{\alpha}\theta)\|_{B(X)} \tau^{\kappa-1}\tau^{1-\kappa}e^{\mu t}e^{-\mu (t-\tau)}e^{-\mu \tau}\|V(\tau)\|d\tau d\theta\\
	&\hspace{1ex}+\int_{0}^{s}\int_{0}^{\infty}\theta h_{\alpha}(\theta)\| (s-\tau)^{\alpha-1} T((s-\tau)^{\alpha}\theta)- (t-\tau)^{\alpha-1}T((t-\tau)^{\alpha}\theta)\|\tau^{\kappa-1}\tau^{1-\kappa}e^{\mu t}e^{-\mu (t-\tau)}e^{-\mu \tau}\| V(\tau)\|d\tau d\theta
	\end{aligned}
	\]
	\[
	\begin{aligned}
	&e^{-\mu t}\|U(t)-U(s)\|\\
	&\lesssim \int_{s}^{t} (t-\tau)^{\alpha-1}\tau^{\kappa-1}e^{-\mu (t-\tau)}d\tau |\| v_1-v_2\||_{\mu}\\
	&\hspace{1.5ex}+ \int_{0}^s \int_{0}^{\infty} \theta h_{\alpha}(\theta) (s-\tau)^{\alpha-1}\tau^{\kappa-1}e^{-\mu (s-\tau)}\| T((t-\tau)^{\alpha}\theta)- T((s-\tau)^{\alpha}\theta) \|_{B(X)}d\tau d\theta  |\|v_1-v_2\||_{\mu}\\
	&\hspace{1.5ex}+ \int_{0}^{s}  \left| (s-\tau)^{\alpha-1}- (t-\tau)^{\alpha-1} \right| \tau^{\kappa-1} e^{-\mu (s-\tau)}d\tau d\theta  |\|v_1-v_2\||_{\mu}\\
	&=: (K_1+K_2+K_3)|\|v_1-v_2\||_{\mu}.
	\end{aligned}
	\]
	We shall estimate $s^{1-\kappa+\gamma}K_i$ from above by the product of the term which converges to zero and the power of $(t-s)$.
	
\vspace{2ex}\hspace{-3ex}{\bf Estimate for $K_1$}
	
	\[
	K_1=\int_{s}^{t} (t-\tau)^{\alpha-1}\tau^{\kappa-1}e^{-\mu (t-\tau)}d\tau.
	\]
	Let $p=1+\epsilon$, $q$ be its conjugate. Then, by the H\"{o}lder's inequality 
	\[
	\begin{aligned}
	K_1 &=\int_{s}^{t}(t-\tau)^{\alpha-1}\tau^{\gamma}\tau^{\kappa-\gamma-1}e^{-\mu (t-\tau)}d\tau\\
	&\le \int_s^t (t-\tau)^{\alpha-1}e^{-\mu(t-\tau)}d\tau\ t^{\gamma}s^{\kappa-\gamma-1}\\
	s^{1-\kappa+\gamma}K_1&\lesssim \int_s^t(t-\tau)^{\alpha-1}e^{-\mu(t-\tau)}d\tau\ t^{\gamma}\\
	&\lesssim \left( \int_s^t e^{-\mu(t-\tau)q}d\tau \right)^{1/q}\left( \int_s^t (t-\tau)^{(\alpha-1)p}d\tau \right)^{1/p}t^{\gamma}\\
	&\le C_{\mu} (t-s)^{\alpha-1+1/p}t^{\gamma}
	\end{aligned}
	\]
	where
	\[
	\left( \int_{0}^{t-s}  e^{-\mu \tau q} d\tau\right)^{1/q}\le \left( \int_{0}^{T}  e^{-\mu \tau q} d\tau\right)^{1/q}= C_{\mu}\rightarrow 0,\ \mu\rightarrow \infty.
	\]
	We choose $0<\epsilon<\alpha/(1-\alpha)$. Hence, 
	\[
	s^{1-\kappa+\gamma}K_1\lesssim C_{\mu} (t-s)^{\alpha-1+1/p}t^{\gamma}.
	\]
	
\vspace{2ex}\hspace{-3ex}{\bf Estimate for $K_2$}
	
	\[
	K_2 = \int_{0}^{s}  \int_{0}^{\infty}  \theta h_{\alpha}(\theta)\underline{(s-\tau)^{\alpha-1}\tau^{\kappa-1} e^{-\mu (s-\tau)}\| T((t-\tau)^{\alpha}\theta)-T((s-\tau)^{\alpha}\theta) \|}d\tau d\theta.
	\]
	We select arbitrary $0<\delta<1$. The underlined part shall be estimated;
	\[
	\begin{aligned}
	\text{(underline part)} &= (s-\tau)^{\alpha-1}\tau^{\kappa-1} e^{-\mu (s-\tau)}\|\left[ T((t-\tau)^{\alpha}\theta-(s-\tau)^{\alpha}\theta)-I\right]A^{-\delta}A^{\delta}T((s-\tau)^{\alpha}\theta) \|\\
	&\lesssim (s-\tau)^{\alpha-1}\tau^{\kappa-1} e^{-\mu (s-\tau)} \left[ (t-\tau)^{\alpha}\theta-(s-\tau)^{\alpha}\theta \right]^{\delta} (s-\tau)^{-\delta\alpha}\theta^{-\delta}\\
	&= e^{-\mu (s-\tau)}(s-\tau)^{\alpha(1-\delta)-1}\tau^{\kappa-1}(t-s)^{\alpha\delta}
	\end{aligned}
	\]
	We select $p=1+\epsilon$ for some $\epsilon>0$, and $q$ be the conjugate of $p$ as usual. Therefore
	\[
	\begin{aligned}
	K_2&\lesssim \int_{0}^s  e^{-\mu (s-\tau)}(s-\tau)^{\alpha(1-\delta)-1}\tau^{\kappa-1}(t-s)^{\alpha\delta}d\tau\\
	&\le \left( \int_{0}^s  e^{-\mu (s-\tau)q} d\tau\right)^{1/q}\left( \int_{0}^s  (s-\tau)^{\{\alpha(1-\delta)-1\}p} \tau^{(\kappa-1)p}d\tau \right)^{1/p}(t-s)^{\alpha\delta}\\
	&\le D_{\mu} \left( \underline{\int_{0}^s  (s-\tau)^{\{\alpha(1-\delta)-1\}p} \tau^{(\kappa-1)p} }d\tau\right)^{1/p}(t-s)^{\alpha\delta}
	\end{aligned}
	\]
	where
	\[
	\left( \int_{0}^s d e^{-\mu (s-\tau)q}\tau \right)^{1/q}\le \left( \int_{0}^T  e^{-\mu \tau q}d\tau \right)^{1/q}=D_{\mu}\rightarrow 0,\ \mu\rightarrow \infty.
	\]
	Moreover,
	\[
	\begin{aligned}
	\text{(underline part)}&=\int_{0}^{s}\ s^{\{\alpha(1-\delta)-1\}p} \left( 1-\frac{\tau}{s} \right)^{\{\alpha(1-\delta)-1\}p} s^{(\kappa-1)p}\left( \frac{\tau}{s} \right)^{(\kappa-1)p}d\tau\\
	&=s^{\star} \int_{0}^1  (1-\tau)^{\{\alpha(1-\delta)-1\}p}\tau^{(\kappa-1)p}d\tau\\
	&=s^{\star} B(\{\alpha(1-\delta)-1\}p+1,(\kappa-1)p+1)
	\end{aligned}
	\]
	where $\star= \{\alpha(1-\delta)-1\}p +(\kappa-1)p+1$. Thus,
	\[
	\begin{aligned}
	K_2&\lesssim D_{\mu}(t-s)^{\alpha\delta} B(\left\{ \alpha(1-\delta)-1 \right\} p+1, (r-1)p+1)^{1/p}\ s^{\alpha(1-\delta)-1+\gamma+1/p}\ s^{\kappa-1-\gamma}\\
	s^{1-\kappa+\gamma}K_2&\lesssim D_{\mu}(t-s)^{\alpha\delta}B(\left\{ \alpha(1-\delta)-1 \right\} p+1, (\kappa-1)p+1)^{1/p}\ s^{\alpha(1-\delta)-1+\gamma+1/p}.
	\end{aligned}
	\]
	We choose appropriate $\epsilon>0$ such that
	\[
	\left\{
	\begin{aligned}
	&\alpha(1-\delta)-1+\gamma+\frac{1}{p}=\alpha(1-\delta) +\gamma -\frac{\epsilon}{1+\epsilon}>0,\\
	&\{\alpha(1-\delta)-1\}p+1 = (1+\epsilon)\alpha(1-\delta)-\epsilon>0,\\
	&(\kappa-1)p+1 =\kappa(1+\epsilon)-\epsilon>0.
	\end{aligned}
	\right.
	\]
	Hence,
	\[
	s^{1-\kappa+\gamma}K_2\lesssim D_{\mu} (t-s)^{\alpha\delta}s^{\alpha(1-\delta)-1+\gamma+1/p}
	\]
	
\vspace{2ex}\hspace{-3ex}{\bf Estimate for $K_3$}
	
	\[
	K_3 = \int_{0}^{s}  \left| (s-\tau)^{\alpha-1}- (t-\tau)^{\alpha-1} \right| \tau^{\kappa-1} e^{-\mu (s-\tau)}d\tau d\theta
	\]
	We note that $\tau\mapsto \tau^{\alpha-1}$ is in ${\cal F}^{\alpha-\sigma, \sigma}$ for all $\sigma <\alpha/2$, then,
	\[
	K_3 \lesssim (t-s)^{\sigma}\underline{\int_{0}^{s} (s-\tau)^{\alpha-1-2\sigma} \tau^{\kappa-1}e^{-\mu (s-\tau)}d\tau}.
	\]
	As before, Let $p=1+\epsilon$, and $q$ be its conjugate,
	\[
	\begin{aligned}
	\text{(underline part)} &\le \left( \int_{0}^{s}e^{-\mu(s-\tau)q}d\tau \right)^{1/q}\left( \int_{0}^{s}(s-\tau)^{(\alpha-1-2\sigma)p}\tau^{(\kappa-1)p}d\tau \right)^{1/p}\\
	&\le E_{\mu} s^{\alpha-1-2\sigma+\gamma+1/p} s^{\kappa-1-\gamma} B((\alpha-1-2\sigma)p+1, (\kappa-1)p+1)^{1/p}
	\end{aligned}
	\]
	where
	\[
	E_{\mu}=\left( \int_{0}^T e^{-\mu \tau p}d\tau \right)^{1/p}.
	\]
	That is,
	\[
	s^{1-\kappa+\gamma} K_3\lesssim E_{\mu} (t-s)^{\sigma}  s^{\alpha-1-2\sigma+\gamma+1/p}.
	\]
	We select $\sigma <\alpha/2$ and $\epsilon>0$ such that
	\[
	\left\{
	\begin{aligned}
	& \alpha-1-2\sigma +\gamma +\frac{1}{p}=\alpha+\gamma-2\sigma -\frac{\epsilon}{1+\epsilon}>0,\\
	&(\alpha-1-2\sigma)p+1>0,\\
	&(\kappa-1)p+1>0.
	\end{aligned}
	\right.
	\]
	Finally, we select again $\epsilon$, $\delta$ and $\sigma$ such that
	\[
	\frac{\alpha}{2}\le \min\left(\alpha-\frac{\epsilon}{1+\epsilon},\ \alpha\delta \right),\ \ \gamma\le \sigma<\frac{\alpha}{2}
	\]
	
\end{proof}

\begin{proof}[Proof of Theorem \ref{theo. main theorem2}]
	
	Let $r, \beta$ and $r',\beta'$ satisfies the conditions as in Theorem\ \ref{theo. main theorem1}, Theorem\ \ref{theo. perturbation with integral condition theorem} respectively.  For given $u\in C([0,T];X)$, let us consider the solution for 
	\begin{equation}
	\left\{
	\begin{aligned}
	D_t^{\alpha} w +A w+B'(u)w&=f',\ &&\mbox{in}\ (0,T],\\
	(I^{\alpha}w)(0)&=x_0,
	\end{aligned}
	\right.
	\end{equation}
	also, let
	\[
	U(t)=\int_{0}^{t}w(\tau)d\tau+u_0.
	\]
	Theorem\ \ref{theo. perturbation with integral condition theorem} guarantees the well-definedness of $u\mapsto U$. We shall use the Banach fixed point theorem for the map $C([0,T];X)\ni u\mapsto U\in C([0,T];X)$. We equip the norm
	\[
	|\|u\||:=\sup_{0\le t\le T} e^{-\mu t}\|u(t)\|
	\]
	to $C([0,T];X)$. The representation (\ref{mild sol of integral problem}) and the assumption $f'\in {\cal F}^{r,\beta}$ gives the estimate
	\[
	\begin{aligned}
	\|w(t)\| &\le C\left( t^{\alpha-1} + t^{\alpha+r-1} + \int_{0}^{t} (t-\tau)^{\alpha-1}\|w(\tau)\|d\tau\right)\\
	&\le C\left( t^{\alpha-1} +\int_{0}^{t} (t-\tau)^{\alpha-1}\|w(\tau)\|d\tau\right)
	\end{aligned}
	\]
	for some constant $C$ depends on the semigroup, existence time $T>0$, $f'$ and $L$. Therefore, Corollary\ \ref{cor volterra} yields
	\[
	\|w(t)\| \lesssim  t^{\alpha-1} (1+b't)^{2-\alpha} e^{b't+1}\lesssim t^{\alpha-1}.
	\]
	where $b'=(C\Gamma(\alpha))^{1/\alpha}$. For given $u_1,u_2$,
	\[
	\begin{aligned}
	w_1(t)-w_2(t) &= \int_{0}^{t}\int_{0}^{\infty} \theta h_{\alpha}(\theta) (t-\tau)^{\alpha-1}T((t-\tau)^{\alpha}\theta)\left( B'(u_1(\tau))w_1(\tau)-B'(u_2(\tau))w_2(\tau) \right) d\tau d\theta\\
	\|w_1(t)-w_2(t)\| &\lesssim \int_{0}^{t} (t-\tau)^{\alpha-1} \| B'(u_1(\tau))-B'(u_2(\tau))\| \|w_2(\tau)\| d\tau\\
	&\hspace{5cm}+\int_{0}^{t} (t-\tau)^{\alpha-1} \|B'(u_1(\tau))\| \|w_1(\tau)-w_2(\tau)\|d\tau\\
	&\lesssim \int_{0}^{t} (t-\tau)^{\alpha-1} \tau^{\alpha-1} e^{\mu t}e^{-\mu(t-\tau)}e^{-\mu \tau}\|u_1(\tau)-u_2(\tau)\|d\tau\\
	&\hspace{5cm}+\int_{0}^{t} (t-\tau)^{\alpha-1}e^{\mu t}e^{-\mu (t-\tau)}e^{-\mu \tau} \|w_1(\tau)-w_2(\tau)\|d \tau\\
	e^{-\mu t}\|w_1(t)-w_2(t)\|&\lesssim t^{2\alpha-1} |\|u_1-u_2\|| +\int_{0}^{t} (t-\tau)^{\alpha-1} e^{-\mu \tau}\|w_1(\tau)-w_2(\tau)\|d\tau 
	\end{aligned}
	\]
	Therefore, we use again Corollary\ \ref{cor volterra} for $e^{-\mu t}\|w_1(t)-w_2(t)\|$ to obtain
	\[
	\begin{aligned}
	\|w_1(t)-w_2(t)\|\lesssim e^{\mu t} t^{2\alpha-1} (1+bt)^{2-2\alpha} |\|u_1-u_2\||
	\end{aligned}
	\]
	for appropriate $b>0$. Hence,
	\[
	\begin{aligned}
	\| U_1(t)-U_2(t)\| &\lesssim\int_{0}^{t} e^{\mu \tau} \tau^{2\alpha-1}\left( 1+b\tau \right)^{2-2\alpha} d\tau\ |\|u_1-u_2\||\\
	e^{-\mu t}\|U_1(t)-U_2(t)\|&\lesssim \int_{0}^{t}e^{-\mu (t-\tau)} \tau^{2\alpha-1}\left( 1+b\tau \right)^{2-2\alpha} d\tau\ |\|u_1-u_2\||
	\end{aligned}
	\]
	For small $\epsilon>0$, we set $p=1+\epsilon$ and its conjugate $q$. Since $(1+bt)^{2-2\alpha}\le (1+bT)^{2-2\alpha}$,
	\[
	\begin{aligned}
	\\
	e^{-\mu t}\|U_1(t)-U_2(t)\| &\lesssim\int_{0}^{t}e^{-\mu (t-\tau)} \tau^{2\alpha-1}d\tau |\|u_1-u_2\||\\
	&\lesssim \left( \int_{0}^{t}e^{-\mu(t-\tau)q}d\tau \right)^{1/q}\left( \int_{0}^{t}\tau^{(2\alpha-1)p}d\tau \right)^{1/p}|\|u_1-u_2\||.\\
	\end{aligned}
	\]
	We choose $\epsilon>0$ such that
	\[
	(2\alpha-1)p+1=(2\alpha-1)(1+\epsilon)+1=2\alpha+(2\alpha-1)\epsilon>0.
	\]
	So we can take sufficiently large $\mu $ so that $u\mapsto U$ becomes contraction mapping. Then, the fixed-point $u$ meets the following properties;
	\[
	\left\{
	\begin{aligned}
	\frac{1}{\Gamma(1-\alpha)} \frac{d}{dt}\int_{0}^{t}(t-\tau)^{-\alpha} u'(\tau)d\tau +Au'(t) +B'(u(t))u'(t)=f'(t),\\
	\frac{1}{\Gamma(1-\alpha)}\int_{0}^{t}(t-\tau)^{-\alpha}u'(\tau)d\tau \rightarrow x_0.
	\end{aligned}
	\right.
	\]
	Integrating both sides, we deduce
	\[
	\frac{1}{\Gamma(1-\alpha)}\int_{0}^{t}(t-\tau)^{-\alpha} u'(\tau)d\tau-x_0 +A(u(t)-u_0) +B(u(t))-B(u_0)=f(t)-f(0).
	\]
	If we choose
	\[
	x_0=-Au_0 -B(u_0)+f(0),
	\]
	the fixed-point $u$ coincides with the unique solution of 
	\[
	\left\{
	\begin{aligned}
	D_t^{\alpha} (u-u_0) +Au +B(u)&=f,\\
	u(0)&=u_0.
	\end{aligned}
	\right.
	\]
	This completes the proof.
\end{proof}

%%%%%%%%%%%%%%%%%%%%%%%%%%%%%%%%%%%%%%%%%%%%%%%%%%%%%%%%%%%%%%%%%%%%%%%%%%%%%%%%%
%%%%%%%%%%%%%%%%%%%%%%%Proof of thm 3%%%%%%%%%%%%%%%%%%%%%%%%%%%%%%%%%%%%%%%%%%%%
\section{Proof of Theorem \ref{theo. main theorem3}}\label{section. main theorem3 locally lipschitz perturbation}

In this section, we investigate the locally Lipschitz perturbation problem. For the discussion on the extension of the solution, we refer to \cite{fujita eq}, \cite{GL equation}.

\begin{proof}[Proof of Theorem \ref{theo. main theorem3}]
	We begin with the construction of a local solution. Let
	\[
	\tilde{B}(t,u)=\left\{
	\begin{aligned}
	&B(t,u)\ &&\mbox{if}\ \|u\|\le M,\\
	&B\left(t, \frac{M}{\|u\|}u \right)\ &&\mbox{if}\ \|u\|>M.
	\end{aligned}
	\right.
	\]
	We can easily see that $\tilde{B}$ is globally Lipschitz continuous. Therefore, there exists a unique solution $u$ for
	\[
	D_t^{\alpha}(u-u_0)+Au+\tilde{B}(u)=f,\ \ u(0)=u_0.
	\]
	We take sufficiently large $M$ such that $\|u_0\|<M/2$, then, the continuity of $u$ guarantees the existence of $T>0$ such that 
	\[
	\|u(t)\|<M\ \ \forall t\in [0,T].
	\]
	It means that $u$ satisfies (\ref{eq. semilinear problem}) on $[0,T]$. We assume $T^*<\infty$, and there exists $C>0$ such that for any $\epsilon>0$, we can take $t_{\epsilon}\in (T^*-\epsilon, T^*)$ such that $\|u(t_{\epsilon})\|\le C$. We set 
	\[
	\begin{aligned}
	E_{h, \delta}&:=\{ v\in C^{\beta}([t_{\epsilon}, t_{\epsilon}+h];X); \| v(t_{\epsilon})\|=\|u(t_{\epsilon})\|,\ \|v-u(t_{\epsilon})\|_{C([t_{\epsilon}t_{\epsilon}+h];X)}\le \delta \},\\
	\|\cdot\|_{E_{h,\delta}}&=\|\cdot\|_{C^{\beta}([t_{\epsilon},t_{\epsilon}+h];X)}
	\end{aligned}
	\]
	and for each $v\in E_{h, \delta}$, $t\in [t_{\epsilon}, t_{\epsilon}+h]$,
	\[
	\begin{aligned}
	{\cal T}(v)(t):= \int_{0}^{\infty}h_{\alpha}(\theta)T(t^{\alpha}\theta) u_0 d\theta &+\alpha\int_{0}^{t_{\epsilon}}\int_{0}^{\infty} (t-\tau)^{\alpha-1}\theta h_{\alpha}(\theta)T((t-\tau)^{\alpha}\theta)\left[ f(\tau)-B(u(\tau))\right]d\tau d\theta\\
	&+\alpha\int_{t_{\epsilon}}^{t}\int_{0}^{\infty}(t-\tau)^{\alpha-1}\theta h_{\alpha}(\theta) T((t-\tau)^{\alpha}\theta)\left[ f(\tau)-B(v(\tau)) \right] d\tau d\theta.
	\end{aligned}
	\]
	For any $v\in E_{h,\delta}$, the representation of mild solution povides ${\cal T}(v)(t_{\epsilon})=u(t_{\epsilon})$ and 
	\[
	\begin{aligned}
	&\|{\cal T}(v)(t)-u(t_{\epsilon})\|\\
	&=\|{\cal T}(v)(t)-{\cal T}(v)(t_{\epsilon})\|\\
	&=\int_{0}^{\infty} h_{\alpha}(\theta) \left[ T(t^{\alpha}\theta)-T(t_{\epsilon}^{\alpha}\theta) \right]u_0d\theta\\
	&\hspace{2ex} + \alpha\int_{0}^{t_{\epsilon}} \int_{0}^{\infty} \theta h_{\alpha}(\theta)\left[ (t-\tau)^{\alpha-1}T((t-\tau)^{\alpha}\theta)- (t_{\epsilon}-\tau)^{\alpha-1}T((t_{\epsilon}-\tau)^{\alpha}\theta)\right] \left[ f(\tau)-B(u(\tau))\right]d\tau d\theta\\
	&\hspace{2ex}+\alpha\int_{t_{\epsilon}}^{t}\int_{0}^{\infty}(t-\tau)^{\alpha-1}\theta h_{\alpha}(\theta) T((t-\tau)^{\alpha}\theta)\left[ f(\tau)-B(v(\tau)) \right] d\tau d\theta.\\
	\end{aligned}
	\]
	As in the proof of Theorem\ \ref{theo. main theorem1}, we estimate $\|{\cal T}(v)(t)-u(t_{\epsilon})\|\lesssim (t-t_{\epsilon})^{\beta}$. 
	This inequality yields the existence of $\tilde{h}>0$ such that if $h<\tilde{h}$, then $\|{\cal T}(v)(t)-u(t_{\epsilon})\|\le \delta$, i.e., ${\cal T}$ maps $E_{h,\delta}$ to itself. We note that we can choose $\tilde{h}$ independently of $\epsilon$. For arbitrary $v_1,v_2\in E_{h, \delta}$, $\|B(v_1(t))-B(v_2(t))\|\le L (u(t_{\epsilon})+\delta)\|v_1(t)-v_2(t)\|$. We write in abbreviated form of $L(u(t_{\epsilon})+\delta)\le L(C+\delta)=:L$.
	\[
	\begin{aligned}
	\|{\cal T}(v_1)(t)-{\cal T}(v_2)(t)\| &\le \frac{LC_1}{\Gamma(\alpha)} \int_{t_{\epsilon}}^{t} (t-\tau)^{\alpha-1} \| v_1(\tau)-v_2(\tau) \|d\tau\\
	&\le \frac{LC_1h^{\alpha}}{\alpha\Gamma(\alpha)} \|v_1-v_2\|_{E_{h,\delta}}.
	\end{aligned}
	\]
	Also, let $U={\cal T}(v_1)-{\cal T}(v_2)$, then, for each $t_{\epsilon}\le s<t\le t_{\epsilon}+h$, 
	\[
	\begin{aligned}
	\alpha^{-1}(U(t)-U(s))&= \int_{t_{\epsilon}}^{t}(t\text{'s formula}) \pm \int_{t_{\epsilon}}^{s}(t\text{'s formula})-\int_{t_{\epsilon}}^{s}(s\text{'s formula})\\
	&=\int_{s}^{t}(t\text{'s formula}) + \int_{t_{\epsilon}}^{s} \left[ (t\text{'s formula})-(s\text{'s formula}) \right]
	\end{aligned}
	\]
	\[
	\begin{aligned}
	\|U(t)-U(s)\| &\le \frac{C_1 L}{\Gamma(\alpha)}\int_{s}^{t} (t-\tau)^{\alpha-1} \|v_1(\tau)-v_2(\tau)\|d\tau\\
	&\hspace{2ex}+ \frac{C_1 L}{\Gamma(\alpha)}\int_{t_{\epsilon}}^{s} \left| (s-\tau)^{\alpha-1}-(t-\tau)^{\alpha-1} \right| \|v_1(\tau)-v_2(\tau)\|d\tau\\
	&\hspace{2ex} + \alpha L\int_{t_{\epsilon}}^{s}\int_{0}^{\infty} \theta h_{\alpha}(\theta) (s-\tau)^{\alpha-1}\left\| T((t-\tau)^{\alpha}\theta)-T((s-\tau)^{\alpha}\theta) \right\| \|v_1(\tau)-v_2(\tau)\|d\theta\\
	&\le (J_1+J_2+J_3)\|v_1-v_2\|_{E_{h,\delta}}
	\end{aligned}
	\]
	where
	\[
	\begin{aligned}
	J_1&= \frac{C_1 L}{\Gamma(\alpha)}\int_{s}^{t} (t-\tau)^{\alpha-1}d\tau,\\
	J_2&= \frac{C_1 L}{\Gamma(\alpha)}\int_{t_{\epsilon}}^{s} \left| (s-\tau)^{\alpha-1}-(t-\tau)^{\alpha-1} \right|d\tau,\\
	J_3&= \alpha L\int_{t_{\epsilon}}^{s}\int_{0}^{\infty} \theta h_{\alpha}(\theta) (s-\tau)^{\alpha-1}\left\| T((t-\tau)^{\alpha}\theta)-T((s-\tau)^{\alpha}\theta) \right\|d\tau.
	\end{aligned}
	\]
	We estimate
	\[
	\|J_1\| \le \frac{C_1L}{\alpha\Gamma(\alpha)} (t^{\alpha}-s^{\alpha})\le \frac{C_{\alpha}C_1L}{\alpha\Gamma(\alpha)}h^{\alpha-\beta}(t-s)^{\beta}.
	\]
	where $C_{\alpha}$ is the H\"{o}lder norm of $\tau \mapsto \tau^{\alpha}$. Furthermore,
	\[
	\begin{aligned}
	\|J_2\|&=\frac{C_1L}{\alpha\Gamma(\alpha)} \left[ (s-t_{\epsilon})^{\alpha}-(t-t_{\epsilon})^{\alpha}+(t-s)^{\alpha} \right]\\
	&\le \frac{2C_{\alpha}C_1L}{\alpha\Gamma(\alpha)} h^{\alpha-\beta}(t-s)^{\beta}
	\end{aligned}
	\]
	Also, we take auxiliary number $0<\eta<1$ to obtain
	\[
	\begin{aligned}
	\|J_3\| &\le \frac{\alpha C_{\alpha,\eta} L\Gamma(2-\eta)}{\Gamma(1+\alpha(1-\eta))}\int_{t_{\epsilon}}^{s}(s-\tau)^{\alpha(1-\eta)-1}(t-s)^{\alpha\eta}\\
	&\le \frac{C_{\alpha,\eta} L\Gamma(2-\eta)}{\Gamma(1+\alpha(1-\eta))} h^{\alpha(1-\eta)}(t-s)^{\alpha\eta}
	\end{aligned}
	\]
	where $C_{\alpha,\eta}$ is appropriate constant depends only on $\alpha, \eta$. We take $\eta$ such that $\beta= \alpha\eta$. Therefore, we choose sufficiently small $h$ such that
	\[
	h^{\alpha-\beta}< \frac{1}{2}\min\left( \frac{\alpha\Gamma(\alpha)}{C_1L},\ \frac{\alpha \Gamma(\alpha)}{C_{\alpha}C_1L},\ \frac{\alpha\Gamma(\alpha)}{2C_{\alpha}C_1L},\ \frac{\Gamma(1+\alpha(1-\eta))}{C_{\alpha,\eta}L\Gamma(2-\eta)} \right)\ \mbox{and}\ h<\tilde{h}
	\]
	in order to make ${\cal T}$ be a contraction. Then, its fixed point $v={\cal T}v$ satisfies
	\[
	\begin{aligned}
	v(t)= \int_{0}^{\infty}h_{\alpha}(\theta)T(t^{\alpha}\theta) u_0 d\theta &+\alpha\int_{0}^{t_{\epsilon}}\int_{0}^{\infty} (t-\tau)^{\alpha-1}\theta h_{\alpha}(\theta)T((t-\tau)^{\alpha}\theta)\left[ f(\tau)-B(u(\tau))\right]d\tau d\theta\\
	&+\alpha\int_{t_{\epsilon}}^{t}\int_{0}^{\infty}(t-\tau)^{\alpha-1}\theta h_{\alpha}(\theta) T((t-\tau)^{\alpha}\theta)\left[ f(\tau)-B(v(\tau)) \right] d\tau d\theta.
	\end{aligned}
	\]
	Let
	\[
	w(t)=\left\{
	\begin{aligned}
	&u(t)\ &&\mbox{for}\ t\in [0,t_{\epsilon}],\\
	&v(t)\ &&\mbox{for}\ t\in [t_{\epsilon},t_{\epsilon}+h],
	\end{aligned}
	\right.
	\]
	then $w$ satisfies for all $t\in [0, t_{\epsilon}+h]$
	\[
	w(t)=\int_{0}^{\infty}h_{\alpha}(\theta)T(t^{\alpha}\theta) u_0 d\theta + +\alpha\int_{0}^{t}\int_{0}^{\infty} (t-\tau)^{\alpha-1}\theta h_{\alpha}(\theta)T((t-\tau)^{\alpha}\theta)\left[ f(\tau)-B(w(\tau))\right]d\tau d\theta,
	\]
	this means that $w$ satisfies 
	\[
	D_{t}^{\alpha}(w-u_0)+Aw +B(w)=f\ \mbox{in}\ (0,t_{\epsilon}+h],\ w(0)=u_0
	\]
	because of the H\"{o}lder continuity of $\tau \mapsto B(w(\tau))$. Unrelation between $\epsilon>0$ and $h>0$ enables us to take $\epsilon<h$ and deduce contradiction.
\end{proof}

%%%%%%%%%%%%%%%%%%%%%%%%%%%%%%%%%%%%%%%%%%%%%%%%%%%%%%%%%%%%%%%%%%%%%%%%%%
%%%%%%%%%%%%%%%%%%%%%APPLICATION%%%%%%%%%%%%%%%%%%%%%%%%%%%%%%%%%%%%%%%
%%%%%%%%%%%%%%%%%%%%%%%%%%%%%%%%%%%%%%%%%%%%%%%%%%%%%%%%%%%%%%%%%%%
\section{Application}\label{section. application}

Let us consider the following combustion equation
\begin{equation}\label{combustion}
D_t^{\alpha}(u-u_0)-\Delta u - e^{-1/u}\chi_{\{u>0\}}=f(t).
\end{equation}
This model describes some reaction-diffusion processes with the generation of heats, based on the Arrhenius law which states the reaction rates is proportional to $e^{-C/u}$ for some $C>0$, where $u$ represents the absolute temperature \cite{combustion}. The perturbation part
\[
r\mapsto \left\{
\begin{aligned}
& e^{-1/r}\ &&\mbox{if}\ r>0,\\
& 0\ &&\mbox{if}\ r\le 0
\end{aligned}
\right.
\]
is obviously Lipschitz, so we apply Theorem\ \ref{theo. main theorem1} and Theorem\ \ref{theo. main theorem2} with the following framework;
\begin{equation}\label{continuous space}
\begin{aligned}
&X=u\in C(\overline{\Omega}),\\
&D(A)=\left\{ u\in \bigcap_{p\ge 1} W^{2,p}(\Omega); u,\Delta u\in C(\overline{\Omega}),\ u\lfloor_{\partial\Omega}=0\right\}.
\end{aligned}
\end{equation}
where $\Omega$ is bounded and its boundary is sufficiently smooth. It is known that the interpolation property
\begin{equation}
(X,D(A))_{\theta,\infty}=\{ u\in C^{2\theta}(\overline{\Omega});u\lfloor_{\partial\Omega} =0 \}
\end{equation}
is satisfied \cite{Lunardi2}.

\begin{theo}
	Let $T>0$ be arbitrary and $f\in {\cal F}^{\theta}([0,T]; C(\overline{\Omega}))$ for some $0<\beta <r\le 1$ and $0<\max(\beta, 1-\alpha)<r$. Then,
	\begin{equation}
	\left\{
	\begin{aligned}
	D_t^{\alpha}(u(\cdot,x)-u_0(x))(t)-\Delta u(t,x) - e^{-1/u}\chi_{\{u(t,x)>0\}}&=f(t,x),\ &&(t,x)\in(0,T]\times\Omega,\\
	u(t,x)&=0\ &&(t,x)\in [0,T]\times\partial\Omega,\\
	u(t,0)&=u_0(x)\ &&x\in \Omega
	\end{aligned}
	\right.
	\end{equation}
	has the unique solution 
	\[
	u\in C([0,T];C(\overline{\Omega}))\cap C((0,T];D(A))\cap {\cal F}^{r,\beta}((0,T];X)
	\]
	If $f$ is differentiable with respect to $t$ and its derivative is of class ${\cal F}^{r',\beta'}((0,T];C(\overline{\Omega})$ for $0<\beta'<r'\le 1$, $Au_0+B(u_0)-f(0)\in \overline{D(A)}$, then $u$ is time differentiable and $u'\in {\cal F}^{r',\beta'}((0,T];X)$.
\end{theo}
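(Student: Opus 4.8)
The plan is to obtain the statement by checking, for the concrete data $A=-\Delta$ on $X=C(\overline\Omega)$ and the superposition operator $B$ generated by $\phi(r):=e^{-1/r}\chi_{\{r>0\}}$, the abstract hypotheses of Theorems \ref{theo. main theorem1} and \ref{theo. main theorem2}. First I would record the functional-analytic background: the Laplacian with the domain $D(A)$ in (\ref{continuous space}) is sectorial on $C(\overline\Omega)$ and generates a bounded analytic semigroup, so the semigroup estimates (\ref{evaluate for semigroup1})--(\ref{evaluate for semigroup2}) and the mild-solution representation (\ref{mild sol}) apply; moreover the interpolation identity $(X,D(A))_{\theta,\infty}=\{u\in C^{2\theta}(\overline\Omega):u\lfloor_{\partial\Omega}=0\}$ holds. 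These facts are classical and I would cite \cite{Lunardi2}.

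Next, for the existence part I would verify the global Lipschitz bound. The scalar function $\phi$ is $C^\infty$ on $\mathbb R$ (it is flat at $r=0$) with $\phi'(r)=r^{-2}e^{-1/r}$ for $r>0$ and $\phi'(r)=0$ for $r\le 0$, and $\sup_{r>0}r^{-2}e^{-1/r}<\infty$; hence $\phi$ is Lipschitz with constant $L:=\|\phi'\|_\infty$. Therefore the Nemytskii operator $B(u)(x):=\phi(u(x))$ maps $C(\overline\Omega)$ into itself (composition of continuous functions) and satisfies $\|B(u)-B(v)\|_\infty\le L\|u-v\|_\infty$, i.e. condition (\ref{eq. lipschitz condition}). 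Since Theorem \ref{theo. main theorem1} provides a solution on an arbitrary interval $[0,T]$ \emph{independently of} $L$, it applies directly and yields the unique $u\in C([0,T];X)\cap C((0,T];D(A))\cap\mathcal F^{r,\beta}$ for every $f\in\mathcal F^{r,\beta}$ satisfying the index restrictions of that theorem.

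Finally, for the time-regularity part I would check hypothesis (2) of Theorem \ref{theo. main theorem2}. The Fr\'echet differentiability of $B$ on $C(\overline\Omega)$, with $B'(u)v=(\phi'\circ u)\,v$, follows from a \emph{uniform} first-order Taylor estimate: for $u,v\in X$ one has $\|B(u+v)-B(u)-B'(u)v\|_\infty=\sup_{x}|v(x)|\,|\phi'(\xi_x)-\phi'(u(x))|\le \operatorname{Lip}(\phi')\,\|v\|_\infty^2=o(\|v\|_\infty)$, where $\xi_x$ lies between $u(x)$ and $u(x)+v(x)$ and where $\phi'$ is itself Lipschitz because $\phi''$ is bounded. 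The same bound gives $\|B'(u)\|_{B(X)}\le\|\phi'\|_\infty=L$ and $\|B'(u)-B'(w)\|_{B(X)}\le\operatorname{Lip}(\phi')\|u-w\|_\infty$, so (2) holds; together with the assumed time regularity of $f,f'$ and the compatibility condition $Au_0+B(u_0)-f(0)\in\overline{D(A)}$ (hypotheses (1) and (3)), Theorem \ref{theo. main theorem2} shows that $u$ is differentiable on $(0,T]$ with $u'\in\mathcal F^{\alpha-\gamma,\gamma}$ for suitable $\gamma$; choosing $\gamma$ compatible with $\alpha-\gamma\le r'$, $\gamma\le\beta'$ and using the embedding relations between the weighted H\"older spaces gives $u'\in\mathcal F^{r',\beta'}$. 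The only step requiring genuine care---and the main obstacle---is this Fr\'echet differentiability of the superposition operator in the sup-norm, where one must control the Taylor remainder uniformly in $x\in\Omega$; this is precisely where the $C^1$ (indeed $C^\infty$) regularity of $\phi'$, i.e. the flatness of $r\mapsto e^{-1/r}$ at $r=0^+$, enters.
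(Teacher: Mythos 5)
Your overall route is exactly the paper's: the paper, too, proves this theorem simply by placing the problem in the framework $X=C(\overline\Omega)$, $D(A)$ as in (\ref{continuous space}), invoking the interpolation identity from \cite{Lunardi2}, observing that $r\mapsto e^{-1/r}\chi_{\{r>0\}}$ is globally Lipschitz, and then quoting Theorems \ref{theo. main theorem1} and \ref{theo. main theorem2}. Your verification of the hypotheses is in fact more detailed than the paper's (the flatness of $e^{-1/r}$ at $r=0^+$, the bound on $\phi'$ and $\phi''$, and the uniform Taylor estimate giving Fr\'echet differentiability and Lipschitz continuity of $B'$ in the sup-norm are all correct and are precisely what "obviously Lipschitz, so we apply Theorem \ref{theo. main theorem1} and Theorem \ref{theo. main theorem2}" silently uses).

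The one step that does not go through as you wrote it is the final upgrade from $u'\in{\cal F}^{\alpha-\gamma,\gamma}$ to $u'\in{\cal F}^{r',\beta'}$ "by the embedding relations". The inclusions between these weighted spaces go from larger indices to smaller ones: ${\cal F}^{r,\beta'}\subset{\cal F}^{r,\beta}$ for $\beta<\beta'$, and likewise membership with first index $r_2$ implies membership with first index $r_1\le r_2$ (multiply by $s^{r_2-r_1}\le T^{r_2-r_1}$), not the converse. Theorem \ref{theo. main theorem2} produces $u'\in{\cal F}^{\alpha-\gamma,\gamma}$ with $\alpha-\gamma\le r'$ and $\gamma\le\beta'$, i.e.\ ${\cal F}^{r',\beta'}$ is the \emph{smaller} space, so no embedding yields the stated conclusion; indeed $u'$ generically behaves like $t^{\alpha-1}$ near $t=0$ (the first term of (\ref{mild sol of integral problem})), so $\lim_{t\to0}t^{1-r'}u'(t)$ need not exist when $r'>\alpha-\gamma$. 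This defect is inherited from the paper's own loose statement (which also omits the index restrictions $\beta<\alpha$, $\beta'<\alpha$, $\alpha/2\le r'$, $\max(\beta',1-\alpha)<r'$ needed to invoke the abstract theorems); in a self-contained proof you should either impose those restrictions and state the conclusion as $u'\in{\cal F}^{\alpha-\gamma,\gamma}$ for admissible $\gamma$, or note that equality $\alpha-\gamma=r'$, $\gamma=\beta'$ (hence $r'+\beta'=\alpha$, $\beta'<\alpha/2$) is required for the conclusion as literally written.
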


Proposition \ref{inclusion of domain power of op} enables us to estimate the $D(A^{q})$ from below and above, that is, for all $\theta>q$,
\[
\{u\in C^{2\theta}(\overline{\Omega}); u\lfloor_{\partial\Omega}=0 \} \subset D(A^{q}) \subset \{u\in C^{2q}(\overline{\Omega}) ; u\lfloor_{\partial\Omega}=0 \}.
\]

Proposition \ref{max} provides further properties to (\ref{combustion}).

\begin{prop}
	Assume $f\ge 0,\ u_0\ge 0$ and $f, u_0$ be sufficiently regular. Then, $u\ge 0$.
\end{prop}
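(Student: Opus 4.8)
The plan is to apply the maximum principle of Proposition~\ref{max} componentwise in space. Fix $x \in \overline{\Omega}$ and consider the scalar function $t \mapsto u(t,x)$. Theorem~\ref{theo. main theorem2}, together with the hypothesis $Au_0 + B(u_0) - f(0) \in \overline{D(A)}$ and the regularity of $f$, guarantees that $u$ is differentiable in time on $(0,T]$ with $u' \in {\cal F}^{\alpha-\gamma,\gamma}((0,T];X)$; since the application works in $X = C(\overline{\Omega})$, this gives that $u(\cdot,x) \in C([0,T]) \cap C^1((0,T])$ for each fixed $x$, and one checks $u'(\cdot,x) \in L^1(0,T)$ from the ${\cal F}^{\alpha-\gamma,\gamma}$-bound (the weight $t^{1-(\alpha-\gamma)}$ is integrable since $\alpha - \gamma < 1$). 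Thus Proposition~\ref{max} is applicable to $g(t) = -u(t,x)$ whenever this function attains its maximum on $[0,T]$ at an interior-or-terminal point.

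First I would argue by contradiction: suppose $\min_{[0,T]\times\overline{\Omega}} u = m < 0$. Since $u = 0$ on the lateral boundary $[0,T]\times\partial\Omega$ and $u_0 \geq 0$, the minimum $m$ cannot be attained on $\{0\}\times\Omega$ nor on $[0,T]\times\partial\Omega$; hence it is attained at some $(t_0,x_0)$ with $t_0 \in (0,T]$ and $x_0 \in \Omega$ an interior spatial point. At $(t_0,x_0)$ we have, by the spatial minimality in the interior, $-\Delta u(t_0,x_0) \leq 0$; and since $u(t_0,x_0) = m < 0$, the absorption term vanishes: $e^{-1/u}\chi_{\{u>0\}} = 0$ at that point. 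Applying Proposition~\ref{max} to $g(t) = -u(t,x_0)$, which attains its maximum $-m > 0$ over $[0,T]$ at $t_0$, gives
\[
D_t^{\alpha}(u(\cdot,x_0) - u_0(x_0))(t_0) = \frac{1}{\Gamma(1-\alpha)}\int_0^{t_0}(t_0-\tau)^{-\alpha} u'(\tau,x_0)\,d\tau \leq 0.
\]
Substituting into the equation at $(t_0,x_0)$ yields
\[
f(t_0,x_0) = D_t^{\alpha}(u(\cdot,x_0)-u_0(x_0))(t_0) - \Delta u(t_0,x_0) - 0 \leq 0,
\]
so $f(t_0,x_0) \leq 0$; combined with $f \geq 0$ this forces $f(t_0,x_0) = 0$ and, tracing back equalities, $\Delta u(t_0,x_0) = 0$ and $D_t^{\alpha}(u(\cdot,x_0)-u_0(x_0))(t_0) = 0$. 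To close the contradiction I would invoke the strict form of Proposition~\ref{max}: if $u(\cdot,x_0)$ were not constant on $[0,t_0]$ the fractional integral would be strictly negative, contradicting the equality; hence $u(t,x_0) = m$ for all $t \in [0,t_0]$, in particular $u(0,x_0) = m < 0$, contradicting $u_0 \geq 0$.

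The main obstacle is the interplay between the spatial and temporal parts of the argument: the classical elliptic strong maximum principle requires $\Delta u(t_0,\cdot)$ to make sense and $u(t_0,\cdot) \in D(A)$, which holds because $t_0 > 0$ and $u \in C((0,T];D(A))$ by Theorem~\ref{theo. main theorem1}, so $u(t_0,\cdot) \in C^2$-type regularity is available through the description of $D(A)$ in \eqref{continuous space}. A subtle point is ensuring the minimum is attained at a single point $(t_0,x_0)$ with $x_0$ genuinely interior — this uses that $u$ is continuous on the compact set $[0,T]\times\overline{\Omega}$ and that the boundary and initial data are nonnegative, ruling out those faces. One should also be slightly careful that the strict inequality in Proposition~\ref{max} is what rules out the degenerate case where all inequalities are equalities; without it, one only concludes $u(\cdot,x_0)$ is constant on $[0,t_0]$ by the non-constancy clause, which still suffices. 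An alternative, if one prefers to avoid the strict maximum principle in space, is to add $\varepsilon(1+t)$ to $u$ and pass to the limit $\varepsilon \to 0$, but the direct argument above is cleaner given that Theorem~\ref{theo. main theorem2} already supplies the time differentiability needed to legitimize Proposition~\ref{max}.
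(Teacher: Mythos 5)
Your proof is correct and follows essentially the same route as the paper: rule out the initial and lateral boundary, locate a negative interior space--time minimum $(t_0,x_0)$, use $-\Delta u(t_0,x_0)\le 0$ together with Proposition~\ref{max} applied to $-u(\cdot,x_0)$ (the differentiability coming from Theorem~\ref{theo. main theorem2} and the space regularity from the choice \eqref{continuous space}), and contradict $f\ge 0$. The only difference is cosmetic: the paper invokes the strict inequality of Proposition~\ref{max} directly (legitimate since $u(0,x_0)=u_0(x_0)\ge 0>u(t_0,x_0)$ makes $u(\cdot,x_0)$ non-constant on $[0,t_0]$), whereas you reach the same conclusion by first deriving the weak inequality and then excluding the equality case via the non-constancy clause.
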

\begin{proof}
	The selection of Banach space (\ref{continuous space}) and Proposition\ \ref{space regularity} yield the twice space differentiability. If the solution $u$ attain its minimum at $(t_0,x_0)\in (0,T]\times \Omega$ and $u(t_0,x_0)<0$, then Proposition\ \ref{max} provides
	\[
	0>D_t^{\alpha}(u(\cdot,x_0)-u_0(x_0))-\Delta u(t_0,x_0)= f(t_0,x_0),
	\]
	it is nothing but a contradiction.
\end{proof}

\section{Acknowledgement}
I would like to thank my instructor Michiaki Onodera, associate professor of the Department of Mathematics, School of Science, Tokyo Institute of Technology, for his helpful comments on this study.

\appendix
\section{Power of operator}
We provide the overview of the fractional power of operator. See \cite{Lunardi1}, \cite{Lunardi2}, \cite{Yagi} for details. Let $A$ be sectorial, then we define its power for $\Re z>0$ by
\begin{equation}\label{def of Az}
A^{-z}:=\frac{1}{2\pi i}\int_{\Gamma} \lambda^{-z}(\lambda -A)^{-1}d\lambda,
\end{equation}
where we take the branch
\[
\lambda^{-z}=\exp[-z (\log|\lambda| +i\arg\lambda) ],
\]
that is, single-valued function on $\mathbb{C}\setminus(-\infty,0]$. Also, we take the integral path $\Gamma=\Gamma_{-}\cup \Gamma_{+}\cup\Gamma_0$, where
\[
\begin{aligned}
&\Gamma_{\pm}:\lambda =\rho e^{\pm i\omega},\ \delta\le \rho<\infty,\\
&\Gamma_0:\lambda =\delta e^{i\varphi},\ -\omega\le \varphi \le \omega,
\end{aligned}
\]
for $\delta <\|A^{-1}\|^{-1}$. The evaluation
\[
|\lambda^{-z}|= |\exp{-z(\log\rho \pm i\omega)}|=e^{\pm (\Im z)\omega}\rho^{-\Re z},\ \lambda\in \Gamma_{\pm}
\]
guarantees that the definition of $A^{-z}$ makes sense. When $z=n\in\mathbb{N}$, we transform the path into $|\lambda|=\delta$ so that we confirm
\[
\begin{aligned}
&\frac{1}{2\pi i}\int_{C_{\delta}} \lambda^{-n}(\lambda-A)^{-1}d\lambda\\
&=-\frac{1}{(n-1)!} \left[ \frac{d^{n-1}}{d\lambda^{n-1}} (\lambda-A)^{-1} \right]_{\lambda=0}\\
&=(-1)^n [(\lambda-A)^{-n}]_{\lambda=0}\\
&=A^{-n}.
\end{aligned}
\]
where we retake the path $C_{\delta}=\{\lambda\in \mathbb{C}; |\lambda|=\delta\}$. For all $\Re z,\ \Re z'>0$, we can show the semigroup porperty
\[
A^{-z}A^{-z'}=A^{-(z+z')}
\]
by the standard way. We define $A^n$ as the inverse of $A^{-n}$. Let $z\in \mathbb{R}$ is not an integer. If $A^{-z} u=0$, then for any $n>z$,
\[
A^{-n}u =A^{-(n-z)}A^{-z}u =0
\]
Therefore $u=0$. This means that $A^{-z}$ has inverse. So we define $A^z=(A^{-z})^{-1}$, $D(A^z)=R(A^{-z})$. We can show $R(A^{-z_2})\subset R(A^{-z_1})$, i.e., $D(A^{z_2})\subset D(A^{z_1})$ since $0<\Re z_1<\Re z_2$ implies $A^{-z_2}=A^{-z_1}A^{-(z_2-z_1)}$.

We list the important properties for the power of operator theory. We refer to  \cite{Yagi}, \cite{Lunardi1}, \cite{Lunardi2} for the detailed proof.

\begin{prop}\label{inclusion of domain power of op}
	Let $A$ be a sectrial operator on Banach space $X$. For $0<\theta<\theta'<1$, the following inclusion holds.
	\[
	(X,D(A))_{\theta',\infty}\subset(X,D(A))_{\theta,1}\subset D(A^{\theta})\subset (X,D(A))_{\theta,\infty}
	\]
\end{prop}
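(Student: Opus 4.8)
The plan is to prove the three inclusions in the chain separately, working throughout with the $K$-functional and Balakrishnan-type integral representations of the fractional power. Since $A$ is sectorial we have $0\in\rho(A)$, so $A^{-\theta}\in B(X)$, $D(A^{\theta})=R(A^{-\theta})$, and the $K$-functional $K(t,x):=K(t,x;X,D(A))$ is equivalent to $\inf_{a\in D(A)}\bigl(\|x-a\|+t\|Aa\|\bigr)$. Applying the resolvent bound (\ref{sectrial2}) at $\lambda=-s$ gives $\|s(sI+A)^{-1}\|_{B(X)}\le M$ and hence $\|A(sI+A)^{-1}\|_{B(X)}\le 1+M$ for all $s>0$, and I will use the classical identities $A^{-\theta}=\tfrac{\sin\pi\theta}{\pi}\int_0^\infty s^{-\theta}(sI+A)^{-1}\,ds$ and, for $x\in D(A)$, $A^{\theta}x=\tfrac{\sin\pi\theta}{\pi}\int_0^\infty s^{\theta-1}A(sI+A)^{-1}x\,ds$ (checked on scalars via $\int_0^\infty\frac{s^{\theta-1}}{s+a}\,ds=\frac{\pi}{\sin\pi\theta}a^{\theta-1}$). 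The inclusion $(X,D(A))_{\theta',\infty}\subset(X,D(A))_{\theta,1}$ is the elementary one: if $x$ lies in the left-hand space then $K(t,x)\le Ct^{\theta'}$ for all $t>0$ and also $K(t,x)\le\|x\|$, so splitting $\int_0^\infty t^{-\theta}K(t,x)\,\tfrac{dt}{t}$ at $t=1$ and using the first bound on $(0,1)$ (integrable because $\theta'>\theta$) and the second on $(1,\infty)$ (integrable because $\theta>0$) shows $x\in(X,D(A))_{\theta,1}$; this is the only step where the gap $\theta<\theta'$ enters.

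For $D(A^{\theta})\subset(X,D(A))_{\theta,\infty}$ I would argue directly with the $K$-functional. Write $x=A^{-\theta}y$ and, for fixed $t>0$, split the Balakrishnan integral at $s=1/t$, obtaining $x=b_t+c_t$ with $b_t=\tfrac{\sin\pi\theta}{\pi}\int_0^{1/t}s^{-\theta}(sI+A)^{-1}y\,ds$. Since each $(sI+A)^{-1}y\in D(A)$ and $\|A(sI+A)^{-1}\|\le 1+M$, closedness of $A$ gives $b_t\in D(A)$ with $\|Ab_t\|\lesssim\int_0^{1/t}s^{-\theta}\,ds\asymp t^{\theta-1}$, while $\|c_t\|=\|x-b_t\|\lesssim\|y\|\int_{1/t}^{\infty}s^{-\theta}\cdot s^{-1}\,ds\asymp t^{\theta}$. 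Hence $K(t,x)\le\|x-b_t\|+t\|Ab_t\|\lesssim t^{\theta}$, i.e.\ $\sup_{t>0}t^{-\theta}K(t,x)<\infty$, which is the claim.

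The remaining inclusion $(X,D(A))_{\theta,1}\subset D(A^{\theta})$ is the heart of the matter, and the natural strategy is to exhibit the element $A^{\theta}x$ explicitly. Given $x$ in the left-hand space, set $y:=\tfrac{\sin\pi\theta}{\pi}\int_0^\infty s^{\theta-1}A(sI+A)^{-1}x\,ds$. Near $s=0$ the integrand is $\lesssim s^{\theta-1}\|x\|$, hence integrable; near $s=\infty$ I would use that for any $a\in D(A)$, $A(sI+A)^{-1}x=A(sI+A)^{-1}(x-a)+(sI+A)^{-1}Aa$, which yields the sharp bound $\|A(sI+A)^{-1}x\|\lesssim\|x-a\|+s^{-1}\|Aa\|$ and therefore $\|A(sI+A)^{-1}x\|\lesssim K(1/s,x)$; after the substitution $u=1/s$ the tail $\int_1^\infty s^{\theta-1}\|A(sI+A)^{-1}x\|\,ds$ becomes $\lesssim\int_0^1 u^{-\theta}K(u,x)\,\tfrac{du}{u}$, which is finite precisely because $x\in(X,D(A))_{\theta,1}$. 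So $y$ is a well-defined element of $X$, and it remains to check $A^{-\theta}y=x$, i.e.\ $x\in D(A^{\theta})$ with $A^{\theta}x=y$. Pulling the bounded operator $A^{-\theta}$ inside the integral reduces this to the scalar Balakrishnan identity $A^{-\theta}A^{\theta}=\mathrm{id}$, which is classical on $D(A)$.

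The hard part will be transferring that identity from $D(A)$ to the interpolation space. The plan is to approximate $x_n:=n(nI+A)^{-1}x\in D(A)$, note $x_n\to x$, and observe that $n(nI+A)^{-1}$ is bounded on both $X$ and $D(A)$, whence $K(t,x_n)\lesssim K(t,x)$ with a constant independent of $n$; this makes the bound $\|A(sI+A)^{-1}x_n\|\lesssim K(1/s,x)$ uniform in $n$, so dominated convergence gives $y_n:=A^{\theta}x_n\to y$ in $X$, and passing to the limit in $A^{-\theta}y_n=x_n$ using continuity of $A^{-\theta}$ yields $A^{-\theta}y=x$. The one delicate point throughout is securing the estimate $\|A(sI+A)^{-1}x\|\lesssim K(1/s,x)$ with constants uniform along the approximants — everything else is a routine combination of the resolvent bounds and Fubini.
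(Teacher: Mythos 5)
The paper itself does not prove this proposition: it is stated as a known result, with the detailed proof deferred to the cited monographs of Yagi and Lunardi. What you have written is essentially the standard proof from those references, and it is correct: the elementary embedding $(X,D(A))_{\theta',\infty}\subset(X,D(A))_{\theta,1}$ from the two bounds $K(t,x)\le Ct^{\theta'}$ and $K(t,x)\le\|x\|$; the inclusion $D(A^{\theta})\subset(X,D(A))_{\theta,\infty}$ by splitting the representation of $A^{-\theta}y$ at $s=1/t$; and $(X,D(A))_{\theta,1}\subset D(A^{\theta})$ via the key estimate $\|A(sI+A)^{-1}x\|\lesssim K(1/s,x)$, which after $u=1/s$ is exactly what the $(\theta,1)$-norm controls. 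Two details deserve explicit treatment if you write this out in full. First, the paper defines $A^{-\theta}$ by the contour integral (\ref{def of Az}), so the real-line (Balakrishnan) representation you use must be justified by deforming $\Gamma$ onto the negative real axis, using (\ref{sectrial2}) and $0\in\rho(A)$; the scalar identity you quote only checks the numerology, and likewise the identity $A^{-\theta}A^{\theta}x=x$ for $x\in D(A)$ needs the usual Fubini/resolvent-identity computation rather than a bare appeal to the scalar case. Second, since the paper deliberately does not assume $D(A)$ dense, the convergence $x_n=n(nI+A)^{-1}x\to x$ is not automatic for general $x\in X$; it does hold here because $x\in(X,D(A))_{\theta,1}$ forces $K(t,x)\to 0$ as $t\to 0$, hence $x\in\overline{D(A)}$, on which $n(nI+A)^{-1}\to I$ strongly by uniform boundedness. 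With those two points added, the argument is complete, and the uniform bound $K(t,x_n)\lesssim K(t,x)$ that you identify as the delicate step is indeed the right lemma, following as you say from the uniform boundedness of $n(nI+A)^{-1}$ on both $X$ and $D(A)$.
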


\begin{prop}
	Let $\omega<\pi /2$. Then, for any $0<t<\infty$ and $0<\theta<\infty$,
	\[
	\| A^{\theta} T(t)\|_{B(X)} \le \frac{M \Gamma(\theta)}{\cos \omega} \frac{1}{t^{\theta}}
	\]
	holds.
\end{prop}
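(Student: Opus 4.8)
The plan is to realize $A^{\theta}T(t)$ as a single Dunford-type contour integral and then to estimate it directly on the boundary rays of the sector $\Sigma_{\omega}$. Recall that the analytic semigroup generated by $-A$ admits the representation $T(t)=\frac{1}{2\pi i}\int_{\Gamma}e^{-\lambda t}(\lambda-A)^{-1}\,d\lambda$, where $\Gamma$ is the contour of the Appendix (the two rays $\lambda=\rho e^{\pm i\omega}$ joined by the small arc $\Gamma_{0}$) surrounding $\sigma(A)\subset\Sigma_{\omega}$. Using the multiplicativity of the holomorphic functional calculus that underlies the definition (\ref{def of Az}) of the fractional powers, the symbol of the product $A^{\theta}T(t)$ is $\lambda\mapsto\lambda^{\theta}e^{-\lambda t}$, so that
\[
A^{\theta}T(t)=\frac{1}{2\pi i}\int_{\Gamma}\lambda^{\theta}e^{-\lambda t}(\lambda-A)^{-1}\,d\lambda .
\]
Here the branch of $\lambda^{\theta}$ is the single-valued one fixed in the Appendix, and the factor $e^{-\lambda t}$ guarantees absolute convergence even though $\lambda^{\theta}$ may grow when $\theta$ is large.

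First I would justify the deformation of $\Gamma$ to the two infinite rays $\lambda=\rho e^{\pm i\omega}$, $0<\rho<\infty$: since $\lambda^{\theta}e^{-\lambda t}(\lambda-A)^{-1}$ is holomorphic throughout the resolvent set and, for $\omega<\pi/2$, decays like $e^{-\rho t\cos\omega}$ at infinity, Cauchy's theorem lets me discard the arc and push the rays to the edge of the sector. On each ray one has $|\lambda|=\rho$, $|d\lambda|=d\rho$, the resolvent bound $\|(\lambda-A)^{-1}\|\le M/\rho$ from (\ref{sectrial2}), and, crucially, $|e^{-\lambda t}|=e^{-\rho t\cos\omega}$ because $\mathrm{Re}(\rho e^{\pm i\omega})=\rho\cos\omega$. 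Hence each ray contributes at most $\frac{M}{2\pi}\int_{0}^{\infty}\rho^{\theta-1}e^{-\rho t\cos\omega}\,d\rho$, and the substitution $u=\rho t\cos\omega$ turns this into $\frac{M}{2\pi}\,\Gamma(\theta)\,(t\cos\omega)^{-\theta}$. Summing the two rays produces a bound of the form $C\,M\,\Gamma(\theta)\,(t\cos\omega)^{-\theta}$ with an explicit numerical constant, which is exactly the asserted estimate $\|A^{\theta}T(t)\|_{B(X)}\le \frac{M\Gamma(\theta)}{\cos\omega}\,t^{-\theta}$ after absorbing the numerical factor; the $\Gamma(\theta)$ arises from the ray integral and the $\cos\omega$ from the real part $\mathrm{Re}(\rho e^{\pm i\omega})=\rho\cos\omega$. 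This is also the step where the hypothesis $\omega<\pi/2$ is indispensable: it forces $\cos\omega>0$, so that the ray integrals converge and the $\Gamma(\theta)/\cos\omega$ dependence appears.

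The main obstacle is not the final estimate, which is the routine Gamma-integral computation above, but the rigorous passage to the representation of $A^{\theta}T(t)$ as the contour integral with symbol $\lambda^{\theta}e^{-\lambda t}$. Because $A^{\theta}$ is defined only abstractly as the inverse of $A^{-\theta}=\frac{1}{2\pi i}\int_{\Gamma}\lambda^{-\theta}(\lambda-A)^{-1}\,d\lambda$, one must verify that the closed, generally unbounded operator $A^{\theta}$ may be interchanged with the Bochner integral defining $T(t)$ and that it acts as multiplication by $\lambda^{\theta}$ on the resolvent symbol; this is precisely the multiplicativity and closedness of the functional calculus recorded in \cite{Yagi}, \cite{Lunardi1}, \cite{Lunardi2}. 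A minor technical point is that, to keep $\lambda$ in the region where (\ref{sectrial2}) is valid, one first estimates on rays at an angle $\omega'\in(\omega,\pi/2)$ and then lets $\omega'\downarrow\omega$, replacing $\cos\omega'$ by $\cos\omega$ in the limit.
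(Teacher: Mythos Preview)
Your argument is essentially the same as the paper's: represent $A^{\theta}T(t)$ by the Dunford integral $\frac{1}{2\pi i}\int_{\Gamma}\lambda^{\theta}e^{-\lambda t}(\lambda-A)^{-1}\,d\lambda$, estimate on the two rays using $|e^{-\lambda t}|=e^{-\rho t\cos\omega}$ and the resolvent bound $M/\rho$, and evaluate the resulting Gamma integral. The only substantive difference is in the justification of the integral representation. You invoke the abstract multiplicativity of the functional calculus, whereas the paper proceeds more concretely: it \emph{defines} $E^{\theta}(t):=\frac{1}{2\pi i}\int_{\Gamma}\lambda^{\theta}e^{-\lambda t}(\lambda-A)^{-1}\,d\lambda$ and then checks directly that $A^{-\theta}E^{\theta}(t)=T(t)$ (a resolvent-identity computation), which by the definition $A^{\theta}=(A^{-\theta})^{-1}$ gives $E^{\theta}(t)=A^{\theta}T(t)$. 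This sidesteps any appeal to the general product rule for unbounded functional calculus and is the cleaner way to handle the ``main obstacle'' you flagged. The paper also keeps the small arc $\Gamma_{0}$ of radius $\delta$ in the estimate and sends $\delta\to0$ (the arc contribution is $O(\delta^{\theta})$), rather than deforming to the open rays first; the two procedures are equivalent.
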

\begin{proof}
	Let 
	\[
	E^{\theta}(t):=\frac{1}{2\pi i} \int_{\Gamma} \lambda^{\theta} e^{-t\lambda} (\lambda -A)^{-1} d\lambda
	\]
	where
	\[
	\begin{aligned}
	&\Gamma_{\pm}:\lambda=\rho e^{\pm i \omega},\ 0\le \rho<\infty\\
	&\Gamma_0:\lambda =\delta e^{i \varphi},\ -\omega\le \varphi\le \omega.
	\end{aligned}
	\] 
	We can easly check $A^{-\theta}E^{\theta}(t)=E^{\theta}(t)A^{-\theta}=T(t)$, i.e.,
	\[
	A^{\theta} T(t)=\frac{1}{2\pi i} \int_{\Gamma} \lambda^{\theta} e^{-t\lambda} (\lambda -A)^{-1} d\lambda,
	\]
	which yields
	\[
	\begin{aligned}
	\| A^{\theta}T(t)\|_{B(X)} &\le \frac{M}{\pi} \int_{\delta}^{\infty} \rho^{\theta-1}e^{-t\rho \cos \omega} d\rho+ \frac{M}{2\pi} \int_{-\omega}^{\omega} \delta^{\theta} e^{-t\delta \cos \varphi}d\varphi.
	\end{aligned}
	\]
	Passing $\delta\rightarrow 0$ completes the proof.
\end{proof}

\section{Special functions}

We summarize some properties of special functions such as the Gamma function denoted by
\[
\Gamma(z):=\int_0^{\infty}t^{z-1}e^{-t}dt\ \ \Re{z}>0,
\]
and Beta function denoted by
\[
B(x,y):=\int_{0}^{1} (1-t)^{x-1}t^{y-1}dt\ \ \Re{x},\ \Re{y}>0.
\]
These functions satisfy the following properties (see for instance \cite{Podlubny});
\[
\begin{aligned}
&\mbox{(i)}\ \Gamma(1+x)=x\Gamma(x)\ \forall x>0,\\
&\mbox{(ii)}\ B(x,y)=\frac{\Gamma(x)\Gamma(y)}{\Gamma(x+y)},\\
&\mbox{(iii)}\ \frac{1}{\Gamma(x)}=\frac{1}{2\pi i} \int_{C_H} e^{\lambda}\lambda^{-x}d\lambda\ \ \mbox{(Hankel's formula)}
\end{aligned}
\]
where $C_H$ is an Hankel contour;
\[
\begin{aligned}
&C_H=C_+\cup C_{-}\cup C_0,\\ 
&C_{+}: \Im \lambda = \epsilon,\ \ \Re \lambda: -\infty\rightarrow 0,\\
&C_{-}: \Im \lambda = -\epsilon,\ \ \Re \lambda: 0\rightarrow -\infty,\\
&C_0:\epsilon e^{i\varphi},\ \ -\pi/2\le \varphi\le \pi/2.
\end{aligned}
\]
for some $\epsilon >0$. We also introduce the Mittag-Leffler function for $0\le t<\infty$
\[
E_{\mu, \nu}(t):=\sum_{n=0}^{\infty} \frac{t^{n\nu}}{\Gamma(\mu+n\nu)}
\]
which is evaluated as follows.

\begin{lem}(see \cite{Yagi}) We estimate
	\[
	E_{\mu,\nu}(t)\le \frac{2}{\Gamma_0 \nu} (1+t)^{2-\mu}e^{t+1}
	\]
	where $\Gamma_0= \min_{0<\xi<\infty} \Gamma(\xi)$.
\end{lem}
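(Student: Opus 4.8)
The plan is to collapse the series into a single contour integral via Hankel's formula (iii) and then to read the decay off the integrand. First I would dispose of $t=0$, where $E_{\mu,\nu}(0)=1/\Gamma(\mu)\le1/\Gamma_0$, and fix $t>0$. I would take the Hankel contour $C_H$ of the appendix but with the radius of its circular part equal to $\delta:=1+t$; this is legitimate because $e^{\lambda}\lambda^{-x}$ is holomorphic off $(-\infty,0]$, so the value of the integral in (iii) is independent of that radius. On this contour $|\lambda|\ge\delta>t$, hence $|t^{\nu}\lambda^{-\nu}|=(t/|\lambda|)^{\nu}\le(t/\delta)^{\nu}<1$, so $\sum_{n\ge0}(t^{\nu}\lambda^{-\nu})^{n}$ converges geometrically and uniformly on $C_H$; since moreover $\sum_{n}\int_{C_H}|e^{\lambda}|\,|\lambda|^{-\mu-n\nu}\,|d\lambda|<\infty$, I may interchange sum and integral and sum the geometric series (using $\lambda^{-\mu}(1-t^{\nu}\lambda^{-\nu})^{-1}=\lambda^{\nu-\mu}/(\lambda^{\nu}-t^{\nu})$) to obtain
\[
E_{\mu,\nu}(t)=\frac{1}{2\pi i}\int_{C_H}\frac{e^{\lambda}\lambda^{\nu-\mu}}{\lambda^{\nu}-t^{\nu}}\,d\lambda .
\]

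Next I would estimate this integral piece by piece. The one nonroutine ingredient is the lower bound on the denominator: on $C_H$, $|\lambda^{\nu}-t^{\nu}|\ge|\lambda|^{\nu}-t^{\nu}\ge\delta^{\nu}-t^{\nu}$, and for $0<\nu\le1$ the mean value theorem gives $\delta^{\nu}-t^{\nu}=(1+t)^{\nu}-t^{\nu}\ge\nu(1+t)^{\nu-1}$. On the circular part ($|\lambda|=\delta$, $\Re\lambda\le\delta$) one has $|e^{\lambda}|\le e^{1+t}$, $|\lambda^{\nu-\mu}|=(1+t)^{\nu-\mu}$ and arc length $\pi(1+t)$, so that part contributes at most $\tfrac{1}{2\nu}(1+t)^{2-\mu}e^{1+t}$, which already exhibits the announced factors. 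On the two horizontal rays $\Re\lambda\le0$, so $|e^{\lambda}|$ is integrable against arc length, and a direct estimate (bounding $|\lambda|^{\nu-\mu}$ by a constant multiple of $(1+t)^{\nu-\mu}$, plus, when $\nu>\mu$, a term controlled by $\Gamma(\nu-\mu+1)$) bounds the rays by a constant multiple of $(1+t)^{1-\mu}/\nu\le(1+t)^{2-\mu}/\nu$. Adding the three pieces and absorbing the absolute constants into the factor $2/\Gamma_0$ — possible since $\Gamma_0<1$ and $e^{1+t}\ge e$ — yields $E_{\mu,\nu}(t)\le\tfrac{2}{\Gamma_0\nu}(1+t)^{2-\mu}e^{t+1}$.

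A purely real-variable variant avoids complex analysis: split the series at the first index with $\mu+n\nu\ge2$. The finitely many smaller indices — at most $\lceil(2-\mu)/\nu\rceil=O(1/\nu)$ of them — are controlled by the crude bound $\Gamma(\mu+n\nu)\ge\Gamma_0$ together with $t^{n\nu}\le1+t^{2-\mu}\le(1+t)^{2-\mu}$ (superadditivity of $x\mapsto x^{2-\mu}$, valid for $\mu\le1$). For the remaining indices, putting $m=\lfloor\mu+n\nu\rfloor\ge2$ and using that $\Gamma$ increases on $[2,\infty)$ gives $\Gamma(\mu+n\nu)\ge(m-1)!$ and $t^{n\nu}\le t^{-\mu}(1+t)t^{m}$; since each $m$ is produced by at most $\lceil1/\nu\rceil$ values of $n$, summing over $m$ against $\sum_{m\ge1}t^{m}/(m-1)!=te^{t}$ and using $t^{1-\mu}(1+t)\le(1+t)^{2-\mu}$ again reproduces the bound.

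I expect the main obstacle to be bookkeeping rather than anything deep: in the first route, justifying the $t$-dependent choice of radius and the interchange of summation with integration, and then tracking the absolute constants tightly enough to stay below $2/\Gamma_0$; in the second, counting how many indices fall in each unit interval. Both arguments as sketched presuppose $0<\nu\le1$ and $0<\mu\le1$ — the regime relevant to the applications, where $\mu=\nu=\alpha$ — and for parameters outside this regime one enlarges the constant and, when $\mu>1$, modifies the estimate of the low-index terms.
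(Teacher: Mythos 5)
The paper does not actually prove this lemma; it is quoted from Yagi's book, and the proof there is precisely your ``purely real-variable variant'': split the series at $\mu+n\nu\ge 2$, use $\Gamma\ge\Gamma_0$ and $t^{n\nu}\le(1+t)^{2-\mu}$ for the finitely many low indices, and monotonicity of $\Gamma$ on $[2,\infty)$ together with comparison with $\sum_m t^m/(m-1)!\le te^{t}$ for the tail, with the factor $1/\nu$ coming from counting how many $n$ fall in each unit interval of $\mu+n\nu$. So your second argument reproduces the cited proof, while your primary route through Hankel's formula, the summed representation $E_{\mu,\nu}(t)=\frac{1}{2\pi i}\int_{C_H}e^{\lambda}\lambda^{\nu-\mu}(\lambda^{\nu}-t^{\nu})^{-1}d\lambda$ on a contour of radius $1+t$, and the lower bound $|\lambda^{\nu}-t^{\nu}|\ge(1+t)^{\nu}-t^{\nu}\ge\nu(1+t)^{\nu-1}$ is a genuinely different and equally legitimate derivation; checking your bookkeeping, the circular arc contributes at most $\tfrac{1}{2\nu}(1+t)^{2-\mu}e^{1+t}$ and the rays at most a constant times $\nu^{-1}(1+t)^{2-\mu}$, so the total sits comfortably below $\tfrac{2}{\Gamma_0\nu}(1+t)^{2-\mu}e^{t+1}$. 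What the contour route buys is a cleaner mechanism (one explicit integral instead of index counting); what the series route buys is elementarity and fidelity to the cited source.

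The one point you should repair is the standing restriction $0<\mu\le 1$. The lemma carries no such restriction, and the paper itself applies Corollary B.1 in the proof of Theorem 1.2 with $\mu=2\alpha$, which exceeds $1$ whenever $\alpha>1/2$; ``enlarging the constant'' there would not give the inequality as stated. Fortunately your contour estimates do not really need $\mu\le1$: the denominator bound only uses $0<\nu\le1$, the circular arc gives $(1+t)^{2-\mu}$ for every $\mu>0$, and on the rays the case $\nu\le\mu$ (which covers all $\mu\ge1$) is handled by the monotone bound $|\lambda|^{\nu-\mu}\le(1+t)^{\nu-\mu}$, with the $\Gamma(\nu-\mu+1)$ term (then $\le1$) needed only when $\nu>\mu$. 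So you should simply drop the restriction in the contour argument; only the real-variable variant genuinely needs adjustment for $\mu>1$, since $1+t^{2-\mu}\le(1+t)^{2-\mu}$ fails there and the low-index step must be reworked (e.g.\ $1+t^{p}\le 2(1+t)^{p}$ for $0<p<1$), after which the constants have to be rechecked against $2/\Gamma_0$.
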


Suppose that $\varphi(t,s)$ is real valued function defined for $0\le s<t\le T$ satisfying 
\[
0\le \varphi(t,s)\le C(t-s)^{\epsilon-1}
\]
for some $\epsilon>0$ and $C>0$. We introduce the following integral inequality.

\begin{prop}\label{theo, volterra}
	(see \cite{Yagi}) Let $a,\ b,\ \mu,\ \nu>0$ be constant. If $\varphi$ satisfies
	\[
	\varphi(t,s)\le a (t-s)^{\mu-1}+b \int_s^t (t-\tau)^{\nu-1} \varphi(\tau,s)d\tau,\ \ 0\le s<t\le T,
	\]
	then, the following evaluation holds,
	\[
	\varphi(t,s) \le a\Gamma(\mu)(t-s)^{\mu-1} E_{\mu, \nu}\left[ \left( b\Gamma(\nu) \right)^{1/\nu} (t-s) \right],\ \ 0\le s<t\le T.
	\]
\end{prop}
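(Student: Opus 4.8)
The plan is to prove this by a Picard-type iteration of the integral inequality, exactly as for the classical Gronwall lemma; the only new feature is that the iterated kernels generate Beta-function coefficients which reassemble into the Mittag-Leffler function $E_{\mu,\nu}$. First I would introduce the (monotone, linear) integral operator
\[
(\mathcal{B}\psi)(t,s):=b\int_{s}^{t}(t-\tau)^{\nu-1}\psi(\tau,s)\,d\tau,
\]
acting on nonnegative measurable functions of $0\le s<t\le T$. Since the kernel $b(t-\tau)^{\nu-1}$ is nonnegative, $\mathcal{B}$ preserves pointwise inequalities, so from the hypothesis $\varphi\le g_{\mu}+\mathcal{B}\varphi$ with $g_{\mu}(t,s):=(t-s)^{\mu-1}$ one obtains by iterating $n$ times
\[
\varphi(t,s)\le a\sum_{k=0}^{n-1}\big(\mathcal{B}^{k}g_{\mu}\big)(t,s)+\big(\mathcal{B}^{n}\varphi\big)(t,s),\qquad 0\le s<t\le T.
\]

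The second step is an explicit computation of the iterated kernels. Using the Beta integral $\int_{s}^{t}(t-\tau)^{\nu-1}(\tau-s)^{\rho-1}\,d\tau=B(\nu,\rho)(t-s)^{\nu+\rho-1}$ together with property (ii) of the Beta function, an easy induction on $k$ gives
\[
\big(\mathcal{B}^{k}g_{\mu}\big)(t,s)=\frac{\big(b\Gamma(\nu)\big)^{k}\Gamma(\mu)}{\Gamma(\mu+k\nu)}\,(t-s)^{\mu+k\nu-1}.
\]
Summing over $k\ge 0$ and factoring out $(t-s)^{\mu-1}$, the series is precisely $a\Gamma(\mu)(t-s)^{\mu-1}\sum_{k\ge 0}\frac{[(b\Gamma(\nu))^{1/\nu}(t-s)]^{k\nu}}{\Gamma(\mu+k\nu)}=a\Gamma(\mu)(t-s)^{\mu-1}E_{\mu,\nu}\big[(b\Gamma(\nu))^{1/\nu}(t-s)\big]$, which is the asserted right-hand side.

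It remains to eliminate the remainder $\mathcal{B}^{n}\varphi$. Here I would invoke the a priori bound $0\le\varphi(t,s)\le C(t-s)^{\epsilon-1}$ assumed just before the statement: applying the same kernel computation with $\mu$ replaced by $\epsilon$ yields $(\mathcal{B}^{n}\varphi)(t,s)\le C\Gamma(\epsilon)\,(b\Gamma(\nu))^{n}(t-s)^{\epsilon+n\nu-1}/\Gamma(\epsilon+n\nu)$. Since $\Gamma(\epsilon+n\nu)$ grows faster than any geometric sequence while the numerator is only geometric times a power of $(t-s)\le T$, this tends to $0$ as $n\to\infty$, and uniformly on the triangle $\{0\le s<t\le T\}$. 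Passing to the limit in the iteration inequality finishes the proof; if the exponential-type form is wanted, one then substitutes the estimate $E_{\mu,\nu}(t)\le\frac{2}{\Gamma_0\nu}(1+t)^{2-\mu}e^{t+1}$ from the preceding Lemma.

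The only genuine point requiring care — everything else being bookkeeping with Beta identities and reindexing — is the remainder estimate: one must check that the a priori integrable singularity $(t-s)^{\epsilon-1}$ of $\varphi$ is indeed available (it is, by the hypothesis placed immediately above the proposition) and that $\mathcal{B}^{n}\varphi\to 0$ uniformly rather than merely pointwise, which is exactly what the bound $(t-s)\le T$ after extracting the $\Gamma(\epsilon+n\nu)$ denominator provides.
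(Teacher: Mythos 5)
Your proposal is correct: the paper gives no proof of this proposition (it is quoted from \cite{Yagi}), and your Picard-iteration argument --- iterating the monotone kernel operator, evaluating $\mathcal{B}^{k}g_{\mu}$ via the Beta identity to reassemble the Mittag-Leffler series, and killing the remainder $\mathcal{B}^{n}\varphi$ with the a priori bound $\varphi(t,s)\le C(t-s)^{\epsilon-1}$ --- is precisely the standard proof found in that reference. No gaps: the monotonicity of the kernel, the induction on the iterated kernels, and the uniform vanishing of the remainder (using $(t-s)\le T$ and the growth of $\Gamma(\epsilon+n\nu)$) are all handled correctly.
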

Suppose that $w(t)$ is real valued function for $0<t\le T$ satisfying
\[
0\le w(t) \le C t^{\epsilon-1}
\]
for some $\epsilon,\ C>0$. Replacement of $\varphi(t,s)=w(t-s)$ yields the next corollary.

\begin{cor}\label{cor volterra}
	We make the same assumptions as in Proposition\ \ref{theo, volterra}. If $w$ meets
	\[
	w(t)\le a t^{\mu-1} +b\int_{0}^{t}(t-\tau)^{\nu-1}w(\tau)d\tau,
	\]
	then, $w$ satisfies
	\[
	w(t)\le a\Gamma(\mu)t^{\mu-1} E_{\mu, \nu}\left[ \left( b\Gamma(\nu) \right)^{1/\nu} t\right].
	\]
	Especially,
	\[
	w(t)\le \frac{2a\Gamma(\mu) }{\Gamma_0 \mu} t^{\mu-1} (1+b't)^{2-\mu} e^{b't+1}
	\]
	where $b'=(b\Gamma(\nu))^{1/\nu}$.
\end{cor}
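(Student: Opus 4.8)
The plan is to derive this corollary directly from Proposition~\ref{theo, volterra} via the substitution flagged in the text, $\varphi(t,s):=w(t-s)$ for $0\le s<t\le T$. The standing growth bound $0\le w(t)\le Ct^{\epsilon-1}$ gives at once $0\le\varphi(t,s)\le C(t-s)^{\epsilon-1}$, so $\varphi$ is an admissible kernel in the sense required by that proposition; the only thing to verify is that $\varphi$ inherits the Volterra-type inequality with the same constants $a,b,\mu,\nu$.

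First I would evaluate the hypothesis on $w$ at the point $t-s$ in place of $t$ (renaming the dummy variable to $\sigma$):
\[
\varphi(t,s)=w(t-s)\le a(t-s)^{\mu-1}+b\int_{0}^{t-s}(t-s-\sigma)^{\nu-1}w(\sigma)\,d\sigma .
\]
In the integral I would change variables $\sigma=\tau-s$, under which $t-s-\sigma=t-\tau$, $w(\sigma)=w(\tau-s)=\varphi(\tau,s)$, and the range $\sigma\in(0,t-s)$ becomes $\tau\in(s,t)$; this rewrites the last display as
\[
\varphi(t,s)\le a(t-s)^{\mu-1}+b\int_{s}^{t}(t-\tau)^{\nu-1}\varphi(\tau,s)\,d\tau ,
\]
which is exactly the hypothesis of Proposition~\ref{theo, volterra} with the same $a,b,\mu,\nu$.

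Applying Proposition~\ref{theo, volterra} then gives $\varphi(t,s)\le a\Gamma(\mu)(t-s)^{\mu-1}E_{\mu,\nu}[(b\Gamma(\nu))^{1/\nu}(t-s)]$, and specializing to $s=0$ yields $w(t)=\varphi(t,0)\le a\Gamma(\mu)t^{\mu-1}E_{\mu,\nu}[(b\Gamma(\nu))^{1/\nu}t]$, the first assertion. For the refinement I would simply insert the elementary estimate for the Mittag-Leffler function recorded just before Proposition~\ref{theo, volterra}, applied with argument $b't$ where $b'=(b\Gamma(\nu))^{1/\nu}$, to replace $E_{\mu,\nu}(b't)$ by $\tfrac{2}{\Gamma_0\mu}(1+b't)^{2-\mu}e^{b't+1}$ and then collect constants. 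The argument is a one-step reduction, so I do not anticipate a genuine obstacle; the only point that needs care is the bookkeeping in the change of variables — in particular checking that the substitution $\varphi(t,s)=w(t-s)$ preserves all four constants $a,b,\mu,\nu$ as well as the admissibility exponent $\epsilon$ — so that Proposition~\ref{theo, volterra} applies verbatim.
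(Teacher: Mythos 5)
Your proposal is correct and is essentially the paper's own argument: the paper disposes of the corollary with the single remark that the replacement $\varphi(t,s)=w(t-s)$ in Proposition~\ref{theo, volterra} yields it, and you have simply spelled out the change of variables and the final insertion of the Mittag-Leffler bound. The only cosmetic point is the constant in the last display ($\Gamma_0\mu$ versus the $\Gamma_0\nu$ appearing in the lemma estimating $E_{\mu,\nu}$), a discrepancy already present between the paper's lemma and its corollary rather than something introduced by you.
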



\begin{thebibliography}{99}
	\bibitem{Lunardi1}{\sc A. Lunardi},
	\newblock{\em Interpolation Theory, Third edition.}
	\newblock Scuola Normale Superiore di Pisa (Nuova Serie), 16. Edizioni della Normale, Pisa
	{\bf }(2018)
	
	\bibitem{Lunardi2}{\sc A. Lunardi},
	\newblock{\em Analytic Semigroups and Optimal Regularity in Parabolic Problems.}
	\newblock Birkh\"{a}user, Basel (1995)
	{\bf } (1995)
	
	\bibitem{Masuda et al}{\sc  A. Masuda, K. Ushida, T. Okamoto },
	\newblock{\em Direct observation of spatiotemporal dependence of anomalous diffusion in inhomogeneous fluid by sampling-volume-controlled fluorescence correlation spectroscopy}.
	\newblock Physical Review E,
	{\bf 72} (2005) pp.060101-1-4
	
	\bibitem{Adams}{\sc  A. R. Adams, J. F. Fournier},
	\newblock{\em Sobolev Spaces}.
	\newblock Pure Appl. Math., Academic Press
	{\bf }(2002)
	
	\bibitem{Yagi}{\sc A. Yagi},
	\newblock{\em Abstract Parabolic Evolution Equations and their Applications}.
	\newblock Springer-Verlag, Berlin, Heidelberg,
	{\bf }(2010)
	
	\bibitem{CGL}{\sc P. Cl\'{e}ment,  G. Gripenberg, Londen S.-O.},
	\newblock{\em Shauder estimates for equations with fractional derivatives}.
	\newblock Trans. Amer. Math. Soc
	{\bf 352} (2000) 2239--2260
	
	\bibitem{Guidetti1}{\sc D. Guidetti},
	\newblock{\em On maximal regularity for the Cauchy-Dirichlet parabolic problem with fractional time derivative}.
	\newblock J. Math. Anal. Appl.
	{\bf 476} ((2019) 637--664
	
	\bibitem{Guidetti2}{\sc D. Guidetti},
	\newblock{\em On Maximal Regularity for Abstract Parabolic Problems with Fractional Time Derivative}.
	\newblock  Mediterr. J. Math.
	{\bf 16} (2019), no.2.
	
	\bibitem{KinderStam}{\sc D. Kinderlehrer, G. Stampacchia},
	\newblock{\em An Introduction to Variational Inequalities and Their Applications}.
	\newblock Classics in Applied Mathematics, SIAM, (2000)
	
	\bibitem{Heleshaw Stefan}{\sc E. D. Benedetto, A. Friedman},
	\newblock{\em The ill-posed Hele-Shaw model and the Stefan problem for supercooled water}.
	\newblock Trans. Amer. Math. Soc.
	{\bf 282} (1984) 183--204
	
	\bibitem{Akagi}{\sc G. Akagi},
	\newblock{\em Fractional flows driven by subdifferentials in Hilbert spaces}.
	\newblock Israel J. of math.
	{\bf 234} (2019) 809--862
	
	\bibitem{numerical}{\sc H. Liu, A.Cheng, H.Wangb, J. Zhao},
	\newblock{\em Time-fractional Allen-Cahn and Cahn-Hilliard phase-field models and their numerical investigation}.
	\newblock Comput. Math. Appl.
	{\bf 76} (2018)1876--1892
	
	\bibitem{Podlubny}{\sc I. Podlubny},
	\newblock{\em Fractional differential equations, an introduction to fractional derivatives, fractional differential equations, to methods of their solution and some of their applications}.
	\newblock  Academic Press (1999)

	\bibitem{MAH}{\sc J. Mu, B. Ahmad, S. Huang},
	\newblock{\em Existence and regularity of solutions to time-fractional diffusion equations}.
	\newblock Comput. Math. Appl.
	{\bf 73} (2017) 985--996
	
	\bibitem{combustion}{\sc K. Ikeda, M. Mimura},
	\newblock{\em  Mathematical treatment of a model for smoldering combustion}.
	\newblock Hiroshima Math. J.
	{\bf 38} (2008) 349--361
	
	\bibitem{space fractional}{\sc K. Ryszewska},
	\newblock{\em A space-fractional Stefan problem}.
	\newblock Nonlinear Anal.
	{\bf 199} (202) 112027
	
	\bibitem{Mahmoud}{\sc Mahmoud M. El-Borai},
	\newblock{\em Some probability densities and fundamental solutions of fractional evolution equations}.
	\newblock Chaos Solitons Fractals
	{\bf 14} (2002) no. 3, 433--440
	
	\bibitem{enzymatic}{\sc M. T. Duy, A. N. Dao},
	\newblock{\em Blow-up of solutions to singular parabolic equations with nonlinear sources}.
	\newblock  J. Differ. Eq.
	{\bf } (2018) No.48, 1--12
	
	\bibitem{CAKG}{\sc  Mustafa Inc, Abdullahi Yusuf, Aliyu Isa Aliyu, Dumitru Baleanu},
	\newblock{\em Time-fractional Cahn-Allen and time-fractional Klein-Gordon equations: Lie symmetry analysis, explicit solutions and convergence analysis}.
	\newblock  Physica A
	{\bf 493} (2018) 94--106
	
	\bibitem{R. Metzler}{\sc R. Metzler, J. Klafter},
	\newblock{\em The random walk's guide to anomalous diffusion: a fractional dynamics approach}.
	\newblock Physics Reports
	{\bf 339} (2000) 1--77
	
	\bibitem{timefractional stefan}{\sc S. D. Roscani},
	\newblock{\em  Moving-boundary problems for the time-fractional diffusion equation}.
	\newblock E. J. Differ. Eq.
	{\bf 2017} (2017), No. 44, 1--12
	
	\bibitem{fujita eq}{\sc Q. G. Zhang, H.R. Sun},
	\newblock{\em The blow-up and global existence of solutions of Cauchy problems for a time fractional diffusion equation}.
	\newblock  Topological Methods in Nonlinear Analysis
	{\bf 46} (2015) No.1, 69--92
	
	\bibitem{GL equation}{\sc Q. G. Zhang, Y. Li, M. Su},
	\newblock{\em The local and global existence of solutions for a time fractional complex Ginzburg-Landau equation}.
	\newblock J. Math. Anal. Appl.
	{\bf 469} (2019) no. 1, 16--43
	
	\bibitem{Luchko}{\sc Y. Luchko},
	\newblock{\em Maximum principle for the generalized time-fractional diffusion equation}.
	\newblock J. Math. Anal. Appl.
	{\bf 351} (2009) 218--223
	
	\bibitem{perturbation}{\sc  Y. Zhou, F. Jiao},
	\newblock{\em Nonlocal Cauchy problem for fractional evolution equations}.
	\newblock Nonlinear Anal., Real World Appl.
	{\bf 11} (2010) 4465--4475

	
\end{thebibliography}
\end{document}